\theoremstyle{remark}
\newtheorem{example}{\bf Example}[section]
\newtheorem{exo}[example]{\bf Exercise}
\newtheorem{remark}[example]{\bf Remark}
\theoremstyle{plain}
\newtheorem{theorem}[example]{\bf Theorem}
\newtheorem{lemma}[example]{\bf Lemma}
\newtheorem{definition}[example]{\bf Definition}
\newtheorem{proposition}[example]{\bf Proposition}
\newtheorem{corollary}[example]{\bf Corollary}
\def\cal{\mathcal}
\begin{document}

\title{Introduction to Fatou components in holomorphic dynamics}
% Use \titlerunning{Short Title} for an abbreviated version of
% your contribution title if the original one is too long
\begin{author}[X.~Buff]{Xavier Buff}
\email{xavier.buff$@$math.univ-toulouse.fr}
\address{ %
  Institut de Math\'ematiques de Toulouse\\
   UMR5219\\ Universit\'e de Toulouse, CNRS, UPS\\ F-31062 Toulouse Cedex 9\\ France }
\end{author}
\begin{author}[J.~Raissy]{Jasmin Raissy}\thanks{This work is part of the C.I.M.E. Lecture Notes ``Modern Aspects of Dynamical Systems'' to be published in the Springer Lecture Notes in Mathematics -- C.I.M.E. subseries, corresponding to the course taught by the second author in Cetraro in August 2021.}
\email{jasmin.raissy@math.u-bordeaux.fr}
\address{Univ. Bordeaux, CNRS, Bordeaux INP, IMB, UMR 5251, F-33400 Talence, France \& Institut Universitaire de France (IUF)}
\end{author}

%
% Use the package "url.sty" to avoid
% problems with special characters
% used in your e-mail or web address
%
\maketitle

%\abstract*{This chapter is an introduction to the classification of Fatou components in holomorphic dynamics. We start with the description of the Fatou and Julia sets for rational maps of the Riemann sphere, and finish with an updated account of the recent results on Fatou components for polynomial skew-products in complex dimension two, where we focus on the key steps in the construction giving the existence of a wandering domain for a polynomial endomorphism of $\mathbb{C}^2$.}
%
\begin{abstract} This survey is an introduction to the classification of Fatou components in holomorphic dynamics. We start with the description of the Fatou and Julia sets for rational maps of the Riemann sphere, and finish with an updated account of the recent results on Fatou components for polynomial skew-products in complex dimension two, where we focus on the key steps in the construction giving the existence of a wandering domain for a polynomial endomorphism of $\mathbb{C}^2$.
\end{abstract}

\section{Introduction}
\label{sec:1}

\textit{Discrete dynamics} concerns maps $f:X \to X$ from a space to itself. One of the main goals is to understand sequences of the form
\[
x_0\in X, \quad x_1:= f(x_0), \quad x_2:=f(x_1)= f^{\circ 2}(x_0),\quad x_3 := f(x_2) = f^{\circ 3}(x_0),  \ldots
\]
obtained by {\em iterating $f$}.  The {\em forward orbit} of $x_0$ under $f$ is
\[{\cal O}^+(x_0) := \bigcup_{n\geq 0} \bigl\{f^{\circ n}(x_0)\bigr\}.\]

The main problem is to understand the long term behaviour of the sequence $(x_n)_{n\geq 0}$. 
The answer will depend on the assumptions we make on the space $X$ and the map $f:X\to X$. 

If $X$ is a topological space, we may wonder whether the sequence converges. If this is the case, and in addition $f:X\to X$ is a continuous map, then the limit $\ell$ is a fixed point of $f$, i.e., $f(\ell) = \ell$. 

If $X$ is compact, then thanks to the Bolzano-Weierstrass Theorem, we may extract converging subsequences. We shall denote by $\omega(x_0)$ the \emph{$\omega$-limit set of $x_0$}, that is the set of possible limit values:
\[\omega(x_0) :=\bigcap_{n\geq 0}\overline{{\cal O}^+(x_n)}.\]
If $f:X\to X$ is continuous, then for any $x\in X$, the $\omega$-limit set $\omega(x)$ is $f$-invariant. 

\begin{exo}
Prove it.
\end{exo}

In the rest of these notes, we will study the case where $X$ is a complex manifold and $f:X\to X$ is a holomorphic map. We shall first consider the case where $X=\widehat{\mathbb{C}} = \mathbb{C}\cup \{\infty\}$ is the Riemann sphere and 
$f:\widehat{\mathbb{C}}\to \widehat{\mathbb{C}}$ is a polynomial or a rational map. 

This subject has a fairly long history, with contributions by
Koenigs \cite{Koenigs}, Schr\"oder \cite{Schroder}, B\"ottcher
\cite{Bottcher} in the late 19th century, and the great memoirs of
Fatou \cite{Fatou1, Fatou2, Fatou3} and Julia
\cite{Julia} around 1920. For the history of this early period we
refer the reader to the book of Alexander \cite{Alexander}.
Followed a dormant period, with notable contributions by Cremer
\cite{Cremer1, Cremer2} (1936) and Siegel \cite{Siegel}
(1942), and a rebirth in the 1960's (Brolin, Guckenheimer,
Jakobson). Since the early 1980's, partly under the impetus of
computer graphics, the subject has grown vigorously, with major
contributions by Douady, Hubbard, Sullivan, Thurston, and more
recently Lyubich, McMullen, Milnor, Shishikura, Yoccoz \dots
Although the subject still presents many open problems, there is now
a substantial body of knowledge and
several books have appeared: Beardon \cite{Beardon},
Carleson and Gamelin \cite{CarlesonGamelin}, Milnor \cite {Milnor1}
and Steinmetz \cite{Steinmetz} and the more specialized books by
McMullen \cite{McMullen1} \cite{McMullen2}. Surveys by Blanchard
\cite{Blanchard}, Devaney \cite{Devaney}, Douady \cite{Douady1},
Keen \cite{Keen} and Lyubich \cite{Lyubich1} are also highly
recommended.

There is a small collection of polynomials, for instance
\[f(z) = az+b, \quad f(z) = z^d\quad \text{and} \quad f(z) = z^2-2,\] 
whose dynamics can be fairly easily understood. 

\begin{exo}
Assume $f(z) = az+b$ with $a\in \mathbb{C}$ and $b\in \mathbb{C}$. 
\begin{enumerate}
\item Show that when $|a|\neq 1$ or when $a=1$ and $b\neq 0$, 
the map 
\[\mathbb{C}\ni z\mapsto \omega(z)\in \widehat{\mathbb{C}}\] is constant. 
\item Show that in all other cases, $\omega(z)$ depends on $z$ and is either a finite set, or a euclidean circle. 
\end{enumerate}
\end{exo}

\begin{exo}
Assume $f(z) = z^d$ with $d\geq 2$. 
\begin{enumerate}
\item Show that $\omega(z) = \{0\}$ if $z$ belongs to the unit disk $\mathbb{D}$ and $\omega(z)=\{\infty\}$ if $z\in \widehat{\mathbb{C}}\setminus\overline{\mathbb{D}}$. 
\item Show that 
\begin{enumerate}
\item $\omega(z)\subset S^1$ if $z$ belongs to the unit cirle $ S^1$, 
\item $\omega(z)$ can be  equal to $S^1$ and 
\item $\omega(z)$ can be a finite set of arbitrary cardinality. 
\end{enumerate}
\end{enumerate}
\end{exo}

\begin{exo}
Assume $f(z) = z^2-2$. 
\begin{enumerate}
\item Show that every $z\in \mathbb{C}\setminus\{0\}$ may be written $z = w+1/w$ for some $w\in \mathbb{C}$ with $|w| =  1$ if $z\in [-2,2]$ and $|w|>1$ if $z\in \mathbb{C}\setminus[-2,2]$. 
\item Show that 
\begin{enumerate}
\item $\omega(z) = \{\infty\}$ if $z\in \widehat{\mathbb{C}}\setminus [-2,2]$ and
\item $\omega(z)\subset [-2,2]$ if $z\in [-2,2]$. 
\end{enumerate}
\end{enumerate}
\end{exo}

\begin{exo}
Assume $f:\widehat{\mathbb{C}}\to \widehat{\mathbb{C}}$ is  a Tchebychev polynomial of degree $d$, i.e., such
that $\cos(d\theta)=f(\cos(\theta))$. Show that $\omega(z) = \{\infty\}$ if $z\in \widehat{\mathbb{C}}\setminus[-1,1]$ and 
$\omega(z)\subset [-1,1]$ if $z\in [-1,1]$. 
\end{exo}

Those examples are really atypical examples, since most cases exhibit {\em fractal} and {\em chaotic} behaviour, the analysis of which requires tools from complex
analysis, dynamical systems, topology, combinatorics, $\ldots$

\section{Fatou sets and Julia sets}
\label{sec:2}

Both Fatou's and Julia's work have as a major theme that the
Riemann sphere $\widehat{\mathbb{C}}$ breaks up sharply into two complementary
subsets: the open Fatou set $\mathcal{F}_f$ with orderly dynamics, and the
Julia set $\mathcal{J}_f$ on which the dynamical behaviour of $f$ is chaotic. In
practice, understanding the dynamics of a rational map has come
to mean understanding the topology of the Julia set, its geometry
if possible, and classifying the components of the Fatou set.

\subsection{Definition}
\label{subsec:2.1}

There are several possible definitions of the Fatou and Julia sets. We will adopt the one based on
normal families, which seems very natural to us: a normal
family of analytic maps is ``well-behaved''.  But the very fact
that it has become standard reflects the origins of holomorphic
dynamics in complex analysis, and may
explain why the subject has never quite entered the main stream of
dynamical systems.

\begin{definition}[Normal families]
A family $(f_\alpha:U\to\widehat{\mathbb{C}})_{\alpha\in {\cal A}}$ of holomorphic
maps on an open subset $U\subset \widehat{\mathbb{C}}$ is {\em normal} if 
from any sequence $(f_{\alpha_n})_{n\geq 0}$ one can extract a subsequence
converging uniformly (for the spherical metric in the range) on
compact subsets of $U$. \label{normalfamily} 
\end{definition}

A criterion that will enable us to deal with normality is due to Montel. 

\begin{theorem}[Montel] 
A family $(f_\alpha:U\to\widehat{\mathbb{C}})_{\alpha\in {\cal A}}$ of holomorphic
maps on an open subset $U\subset \widehat{\mathbb{C}}$ which
omits at least three values in $\widehat{\mathbb{C}}$ is normal.
\end{theorem}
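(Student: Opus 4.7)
The plan is to prove equicontinuity of the family in the spherical metric and invoke Arzel\`a--Ascoli; since $\widehat{\mathbb{C}}$ is compact, equicontinuity automatically upgrades to normality. The key is a hyperbolic metric on the thrice-punctured sphere. After composing in the range with a M\"obius transformation, I may assume the three omitted values are $0,1,\infty$, so that every $f_\alpha$ takes values in $X := \widehat{\mathbb{C}} \setminus \{0,1,\infty\}$. The Riemann surface $X$ is hyperbolic with universal cover $\mathbb{D}$ (one realisation of the covering map being the elliptic modular function), hence carries a complete conformal metric $\rho_X$ of constant negative curvature.

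Fix $z_0 \in U$ and choose a small round disk $V \subset U$ containing $z_0$, endowed with its own hyperbolic metric $\rho_V$. Since $V$ is simply connected, each restriction $f_\alpha|_V : V \to X$ lifts through the universal cover to a map $\tilde f_\alpha : V \to \mathbb{D}$; the Schwarz--Pick lemma applied to $\tilde f_\alpha$ then yields that $f_\alpha|_V$ is $1$-Lipschitz from $(V,\rho_V)$ to $(X,\rho_X)$, \emph{uniformly} in $\alpha$. To translate this hyperbolic contraction into spherical equicontinuity I would use that the identity map $(X,\rho_X) \to (\widehat{\mathbb{C}},\sigma)$ is uniformly continuous on all of $X$: on compact subsets of $X$ the two metrics are bi-Lipschitz, while near a puncture $\rho_X$ behaves like $|dz|/(|z|\log(1/|z|))$, so $\rho_X$-balls of fixed small radius have spherical diameter tending to $0$ as one approaches the puncture. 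Combined with Schwarz--Pick, this gives equicontinuity of $\{f_\alpha|_V\}$ on $V$ in the spherical metric.

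Arzel\`a--Ascoli then extracts from any sequence a subsequence converging uniformly on compact subsets of $V$, and a standard diagonal argument over a countable exhaustion of $U$ by such disks $V$ promotes this to normality on $U$. I expect the main obstacle to be precisely the uniform continuity of $(X,\rho_X)\hookrightarrow (\widehat{\mathbb{C}},\sigma)$ near the three punctures: this is a classical but nontrivial quantitative estimate, typically obtained by bounding $\rho_X$ from below via the explicit hyperbolic metric of an embedded punctured disk around each puncture. Everything else reduces to a routine combination of Schwarz--Pick and Arzel\`a--Ascoli.
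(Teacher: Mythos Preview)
The paper does not prove Montel's theorem at all: it is quoted as a classical black box immediately after the definition of normal family, and then used repeatedly throughout. So there is no ``paper's own proof'' to compare against.

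That said, your outline is the standard modern proof and is correct. A small sharpening: the step you flag as the main obstacle---uniform continuity of the inclusion $(X,\rho_X)\hookrightarrow(\widehat{\mathbb{C}},\sigma)$---is in fact a global Lipschitz bound $\sigma\le C\,\rho_X$ on $X$. With your own cusp asymptotic $\rho_X\sim |dz|/\bigl(|z|\log(1/|z|)\bigr)$ near $0$ (and analogously at $1$ and $\infty$), the ratio $\sigma/\rho_X$ tends to $0$ at each puncture and is continuous and positive on compacta of $X$, hence globally bounded. Combined with Schwarz--Pick on $V$ this gives, for $z$ in a compact neighbourhood $V'\Subset V$ of $z_0$,
\[
\sigma\bigl(f_\alpha(z),f_\alpha(z_0)\bigr)\le C\,\rho_X\bigl(f_\alpha(z),f_\alpha(z_0)\bigr)\le C\,\rho_V(z,z_0)\le C'\,\sigma(z,z_0),
\]
so the family is uniformly Lipschitz (not merely equicontinuous) near each point. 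The rest---Arzel\`a--Ascoli with compact target $\widehat{\mathbb{C}}$, then a diagonal argument over a countable cover of $U$---is routine, as you say.
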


In particular, a bounded sequence of holomorphic maps $(f_n:U\to \mathbb{C})_{n\in\mathbb{N}}$ is normal. 

\begin{definition}[Fatou sets and Julia sets]The {\em Fatou set} $\mathcal{F}_f$ of a rational map $f:\widehat{\mathbb{C}}\to \widehat{\mathbb{C}}$ is the largest open subset of $\widehat{\mathbb{C}}$ on which
the family of iterates $(f^{\circ n})_{n\in \mathbb{N}}$ is normal.
The {\em Julia set} $\mathcal{J}_f$ is the complement of the Fatou set. \label{Juliadef} 
\end{definition}

\begin{example}{Example} For $f(z)=z^d$, with $d\geq 2$ or $d\leq -2$, the Julia
set $\mathcal{J}_f$ is the unit circle 
\[S^1:=\{z\in\mathbb{C}~;~|z|=1\}.\]
Indeed,
the family of iterates $(f^{\circ n}:\widehat{\mathbb{C}}\setminus S^1
\to \widehat{\mathbb{C}})_{n\in \mathbb{N}}$ takes
its values in $\widehat{\mathbb{C}}\setminus S^1$, hence it omits more than three values
in $\widehat{\mathbb{C}}$. By Montel's theorem, it is normal and so, $\mathcal{J}_f\subseteq S^1$.
To prove that $S^1\subseteq \mathcal{J}_f$, one may argue that the sequence
$\bigl(f^{\circ(2n)}\bigr)_{n\in \mathbb{N}}$ converges uniformly on any compact
subset of $\mathbb{D}$ to the constant map $0$ and on any compact subset of
$\widehat{\mathbb{C}}\setminus \overline{\mathbb{D}}$ to the constant map $\infty$.
As any neighborhood of a point $z\in S^1$
contains points in $\mathbb{D}$ and points in $\widehat{\mathbb{C}}\setminus \overline{\mathbb{D}}$, there
is no neighborhood of $z$ on which the sequence $\bigl(f^{\circ n}\bigr)_{n\in \mathbb{N}}$ is
normal. 
\end{example}

\begin{figure}[htb]
\centerline{\includegraphics[height=4cm]{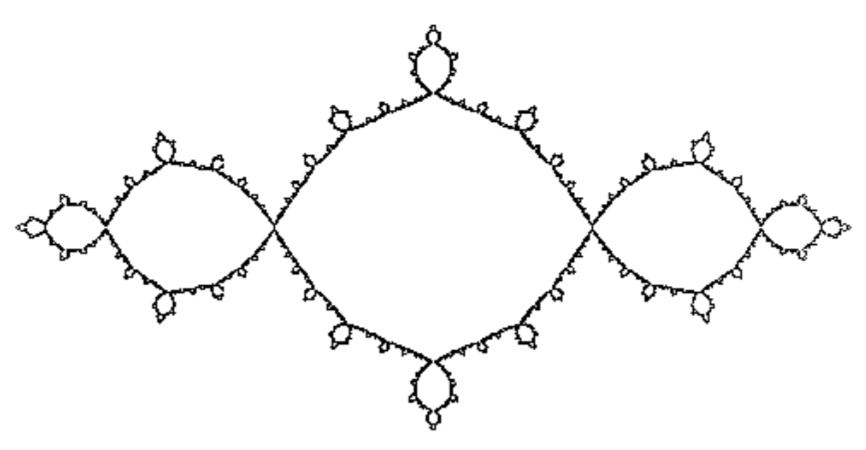}
} 
\caption{The Julia set of the quadratic polynomial
$f:\widehat{\mathbb{C}}\to\widehat{\mathbb{C}}$ defined by $f(z)=z^2-1$.\label{intro1}}
\end{figure}

\subsection{The polynomial case}
\label{subsec:2.2}
In the case of a polynomial,  there is a slightly
different way of understanding things. In this case, $\infty$ is a fixed point and this fixed point has no preimage other than itself. 
It is an exceptional point. 

\begin{definition}
Let $f:\widehat{\mathbb{C}}\to \widehat{\mathbb{C}}$ be a polynomial of degree $d\geq 2$. The {\em filled-in Julia set} of $f$ is the set $\mathcal{K}_f$
of points with bounded orbits:
\[\mathcal{K}_f = \Big\{z\in\mathbb{C}~\Big|~\bigl(f^{\circ n}(z)\bigr)_{n\in\mathbb{N}}~{\rm
  is~bounded}\Big\}.\]
\end{definition}
This set is not only very natural
for a dynamical system, but fits in very nicely with the general
theory.

\begin{proposition}\label{filledJulia}
The filled-in Julia set $\mathcal{K}_f$ is a non-empty compact set.
The Julia set is the topological boundary of $\mathcal{K}_f$.
\end{proposition}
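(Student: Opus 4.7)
The plan is to proceed in four steps: establish an escape radius, deduce boundedness and closedness of $\mathcal{K}_f$, verify that it is non-empty, then show the two inclusions $\mathcal{J}_f \subseteq \partial \mathcal{K}_f$ and $\partial \mathcal{K}_f \subseteq \mathcal{J}_f$ using Montel's theorem.

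First I would establish the escape estimate. Writing $f(z) = a_d z^d + \cdots + a_0$ with $a_d \neq 0$ and $d \geq 2$, the asymptotic $|f(z)|/|z|^d \to |a_d|$ as $|z|\to\infty$ yields an $R > 0$ such that $|z| > R$ implies $|f(z)| \geq 2|z|$. Consequently, once the orbit of $z$ enters $\{|w| > R\}$, the moduli grow at least geometrically and $\bigl(f^{\circ n}(z)\bigr)$ is unbounded; conversely, any unbounded orbit must visit $\{|w|>R\}$. Thus $\mathcal{K}_f = \bigcap_{n\geq 0} (f^{\circ n})^{-1}\bigl(\overline{D(0,R)}\bigr)$, which is bounded (contained in $\overline{D(0,R)}$) and closed (as an intersection of closed sets, or because its complement is a union of open sets $\{z : |f^{\circ n}(z)|>R\}$), hence compact. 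For non-emptiness, the polynomial equation $f(z) - z = 0$ has degree $d \geq 2$, so by the fundamental theorem of algebra it admits a fixed point $z_\star \in \mathbb{C}$, whose orbit is the singleton $\{z_\star\}$ and therefore lies in $\mathcal{K}_f$.

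Next I would prove $\mathcal{J}_f \subseteq \partial \mathcal{K}_f$ by showing that both $\mathrm{int}(\mathcal{K}_f)$ and $\widehat{\mathbb{C}} \setminus \mathcal{K}_f$ lie in $\mathcal{F}_f$. On $\mathrm{int}(\mathcal{K}_f)$, the iterates take values in $\mathcal{K}_f \subset \overline{D(0,R)}$, so they omit a neighborhood of $\infty$ (in particular three values), and Montel's theorem yields normality. On $\widehat{\mathbb{C}} \setminus \mathcal{K}_f$, the escape estimate shows that $f^{\circ n} \to \infty$ locally uniformly: any compact subset is contained in some $\{z : |f^{\circ n_0}(z)|>R\}$ for a single $n_0$, after which the iterates grow geometrically. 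Hence the sequence is normal there as well.

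Finally, the reverse inclusion $\partial \mathcal{K}_f \subseteq \mathcal{J}_f$ is the most delicate step and I would argue by contradiction. Let $z_0 \in \partial \mathcal{K}_f$; since $\mathcal{K}_f$ is closed, $z_0 \in \mathcal{K}_f$, yet every neighborhood of $z_0$ meets $\widehat{\mathbb{C}} \setminus \mathcal{K}_f$. Suppose $(f^{\circ n})$ were normal on some neighborhood $U$ of $z_0$; extract a subsequence converging uniformly on $U$ to a continuous map $\varphi : U \to \widehat{\mathbb{C}}$. At any point of $U \setminus \mathcal{K}_f$ the full sequence tends to $\infty$, so $\varphi \equiv \infty$ there; since such points accumulate at $z_0$ and $\{\infty\}$ is closed, continuity forces $\varphi(z_0) = \infty$. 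But $z_0 \in \mathcal{K}_f$ means $\bigl(f^{\circ n}(z_0)\bigr)$ is bounded, so $\varphi(z_0) \in \mathbb{C}$, a contradiction. The main obstacle lies precisely in this last step, namely in exploiting the coexistence of bounded and escaping orbits in every neighborhood of a boundary point to defeat the continuity of any normal limit; the earlier steps are essentially bookkeeping around the escape estimate and Montel's criterion.
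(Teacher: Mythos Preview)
Your proof is correct and follows essentially the same route as the paper: establish an escape radius, express $\mathcal{K}_f$ as a nested intersection of compact sets, then show $\mathrm{int}(\mathcal{K}_f)\cup(\widehat{\mathbb{C}}\setminus\mathcal{K}_f)\subseteq\mathcal{F}_f$ via Montel and the locally uniform convergence to $\infty$, and finally rule out normality at boundary points by contradiction. The only minor differences are cosmetic: for non-emptiness the paper invokes the finite intersection property of the nested compacts $f^{-n}(\overline{D}_R)$ rather than producing a fixed point, and for $\partial\mathcal{K}_f\subseteq\mathcal{J}_f$ the paper concludes that any normal limit must equal $\infty$ on all of $U$ (implicitly via the identity principle on the open set $U\cap\Omega_\infty$), whereas you reach the contradiction by continuity of $\varphi$ at $z_0$ alone.
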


\begin{proof} Since $f$ is of degree $d\geq 2$, $f(z)/z\to\infty$ as $z\to
\infty$, and so, there exists a constant $R>0$ such that
$\bigl|f(z)\bigr|>2|z|$ whenever $|z|>R$. Set $D_R:=\{z\in\mathbb{C}\mid |z|>R\}$. Then, the nested intersection of compact sets
\[\mathcal{K}_f= \bigcap_{n\in\mathbb{N}} f^{-n}\left(\overline {D}_R\right)\]
is a non-empty compact set.

Furthermore, define $U_R$ to be the open set
$U_R:=\widehat{\mathbb{C}}\setminus \overline {D}_R$. Then, the sequence
$(f^{\circ n})_{n\geq 0}$ converges uniformly to infinity on $U_R$.
Clearly, the same property holds on the open set
\[\Omega_\infty= \bigcup_{n\in\mathbb{N}} f^{-n}(U_R)\]
Observe that $\Omega_\infty=\widehat{\mathbb{C}}\setminus \mathcal{K}_f$ and
$\partial\Omega_\infty=\partial \mathcal{K}_f$.

We will first show that $\partial \mathcal{K}_f \subseteq \mathcal{J}_f$.
Choose a point $z_0\in \partial \Omega_\infty=\partial \mathcal{K}_f$
and assume that there is a
neighborhood $U$ of $z_0$ on which the family of iterates of $f$ is normal.
Then, $U$ intersects $\Omega_\infty$ and the sequence
$(f^{\circ n})_{n\geq 0}$ must converge to infinity on the whole
set $U$. This contradicts the fact that the sequence
$(f^{\circ n}(z_0))_{n\geq 0}$ is bounded.

We will now show that $\mathcal{J}_f\subseteq \partial \mathcal{K}_f$. As mentioned previously, the sequence $(f^{\circ n})$ converges locally uniformly to a map which is constant and equal to $\infty$ outside $\mathcal{K}_f$. Thus, the complement of $\mathcal{K}_f$ is contained in the Fatou set $\mathcal{F}_f$ and $\mathcal{J}_f\subseteq \mathcal{K}_f$. Similarly, on the interior of $\mathcal{K}_f$, the sequence $(f^{\circ n})$ is bounded, thus normal by Montel's theorem. As a consequence, the interior of $\mathcal{K}_f$ is contained in $\mathcal{F}_f$ and $\mathcal{J}_f\subseteq \partial \mathcal{K}_f$. 
\end{proof}

%\begin{figure}[htb]
%\centerline{
%\begin{picture}(0,0)%
%\includegraphics{Rabbit.png}%
%\end{picture}%
%\setlength{\unitlength}{1657sp}%
%\begin{picture}(16002,7674)(1249,-7498)
%\put(6436,-2896){$\mathcal{K}_f$}
%\put(15236,-2896){$\mathcal{J}_f$}
%\end{picture}}
%\caption{Left: the filled-in Julia set of a quadratic polynomial. It is known as the Douady
%Rabbit. Right: the Julia set for the same quadratic polynomial. It is
%the topological boundary of the filled-in Julia set.} %\label{intro3}}
%\end{figure}

The reason why $\mathcal{K}_f$ is called a filled-in Julia set is the following. 

\begin{definition}
A compact subset $K \subset \mathbb{C}$ is called {\em full} if it is
connected and its complement is connected.
\end{definition}

\begin{proposition}\label{filledJuliafull}
Let $f:\widehat{\mathbb{C}}\to \widehat{\mathbb{C}}$ be a polynomial. Each connected component of $\mathcal{K}_f$ is full.
\end{proposition}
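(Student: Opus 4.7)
The plan is to argue by contradiction using the maximum modulus principle. Let $C$ be a connected component of $\mathcal{K}_f$; by definition it is connected, so the issue is only to show that its complement in $\mathbb{C}$ is connected. Suppose it is not. Since $C$ is compact (closed in the compact set $\mathcal{K}_f$), $\mathbb{C}\setminus C$ has exactly one unbounded component, so the assumption produces a bounded connected component $U$ of $\mathbb{C}\setminus C$. The goal is to prove $U\subset \mathcal{K}_f$, because then $\overline{U}\cup C$ is a connected subset of $\mathcal{K}_f$ strictly larger than $C$, contradicting the maximality of $C$ as a connected component.

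First I would record the standard topological fact $\partial U\subset C$. Since components of an open set are open, $U$ is open, so $\partial U\cap U=\emptyset$; and any point of $\partial U$ is a limit of points of $U$, hence of $\mathbb{C}\setminus C$, so it lies in $\overline{\mathbb{C}\setminus C}$. If such a boundary point belonged to $\mathbb{C}\setminus C$ (which is open), it would lie in some component of $\mathbb{C}\setminus C$ distinct from $U$, yet every neighborhood of it would meet $U$; this is impossible because distinct components of an open set are disjoint open sets. Hence $\partial U\subset C\subset\mathcal{K}_f$.

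Now I would apply the maximum principle iterate by iterate. By Proposition~\ref{filledJulia}, $\mathcal{K}_f$ is compact, so fix $R>0$ with $\mathcal{K}_f\subset\overline{D(0,R)}$. The set $\mathcal{K}_f$ is forward-invariant under $f$ (if the orbit of $z$ is bounded, so is the orbit of $f(z)$), so $f^{\circ n}(\mathcal{K}_f)\subset\mathcal{K}_f\subset\overline{D(0,R)}$ for every $n\geq 0$. In particular $|f^{\circ n}|\leq R$ on $\partial U$ for every $n$. The closure $\overline{U}$ is compact and $f^{\circ n}$ is a polynomial, so the maximum modulus principle gives $|f^{\circ n}(z)|\leq R$ for every $z\in\overline{U}$ and every $n$. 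Consequently every point of $U$ has bounded forward orbit, so $U\subset\mathcal{K}_f$. Since $\partial U\neq\emptyset$ (as $U$ is a non-empty bounded open proper subset of $\mathbb{C}$) and $\partial U\subset C$, the sets $\overline{U}$ and $C$ are connected with non-empty intersection, and their union is the promised connected subset of $\mathcal{K}_f$ strictly containing $C$. I expect the only delicate step to be the clean verification that $\partial U\subset C$; the rest is a routine combination of compactness, forward-invariance, and the maximum principle.
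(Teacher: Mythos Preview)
Your proof is correct and follows essentially the same route as the paper: assume a bounded complementary component $U$, observe $\partial U\subset C$, and apply the maximum modulus principle to the iterates $f^{\circ n}$ to force $U\subset\mathcal{K}_f$, contradicting maximality of $C$. You supply more detail than the paper does (in particular the justification of $\partial U\subset C$ and the passage from $U\subset\mathcal{K}_f$ to the contradiction), but the argument is the same.
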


\begin{proof} Of course, any component $K_0$ is compact and connected. Any
component $U$ of the complement must satisfy $\partial U \subset
K_0$, and if the complement is not connected at least one such
component must be bounded. Then, by the maximum principle
\[\sup_U \bigl|f^{\circ n}(z)\bigr| = \sup_{\partial U} \bigl|f^{\circ n}(z)\bigr|\leq
\sup_{K_0} \bigl|f^{\circ n}(z)\bigr|.\] Hence, it is uniformly bounded, so $U
\subset K_0$. \end{proof}

\subsection{Invariance of Fatou sets and Julia sets}
\label{subsec:2.3}
\begin{proposition}
Both the Julia set and the Fatou set are completely invariant:
\[f(\mathcal{J}_f) = \mathcal{J}_f\,,\quad f^{-1}(\mathcal{J}_f) = \mathcal{J}_f \,,\quad
f(\mathcal{F}_f) = \mathcal{F}_f \,,\quad f^{-1}(\mathcal{F}_f) = \mathcal{F}_f\,.\]\label{invariance}
\end{proposition}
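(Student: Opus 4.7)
The plan is to first establish $f^{-1}(\mathcal{F}_f)=\mathcal{F}_f$, and then deduce all four identities from this together with the surjectivity of the rational map $f:\widehat{\mathbb{C}}\to\widehat{\mathbb{C}}$.

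For the easy inclusion $f^{-1}(\mathcal{F}_f)\subseteq\mathcal{F}_f$, I will take $z$ with $f(z)\in\mathcal{F}_f$, pick a normal open neighborhood $V$ of $f(z)$, and set $U:=f^{-1}(V)$. On $U$ one has $f^{\circ(n+1)}=(f^{\circ n}|_V)\circ(f|_U)$, and pre-composing the normal family $(f^{\circ n})$ on $V$ with the single holomorphic map $f|_U:U\to V$ produces a normal family on $U$: a uniformly convergent subsequence of $(f^{\circ n})$ on any compact $f(K')\subset V$ pulls back to a uniformly convergent subsequence of $(f^{\circ(n+1)})$ on $K'\subset U$. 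Normality is unaffected by dropping finitely many terms, so $(f^{\circ n})_{n\geq 0}$ is itself normal on $U$, whence $z\in\mathcal{F}_f$.

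The reverse inclusion, asserting that $f$ sends $\mathcal{F}_f$ into itself, is the real content. Starting from $z\in\mathcal{F}_f$ with normal open neighborhood $U$, the open mapping theorem makes $V:=f(U)$ an open neighborhood of $f(z)$, and I want to show $(f^{\circ n})$ is normal there. Given any sequence of indices $(m_k)$, I will lift to $(m_k+1)$ and use normality on $U$ to extract a subsequence along which $f^{\circ(m_{k_j}+1)}$ converges uniformly on compacts of $U$ to some limit $g$. Since $f^{\circ(m_{k_j}+1)}(z')=f^{\circ m_{k_j}}(f(z'))$ depends only on $f(z')$, the limit $g$ factors through $f$ as $g=\widetilde g\circ f$ for a well-defined $\widetilde g:V\to\widehat{\mathbb{C}}$. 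To transfer the uniform convergence from $U$ to $V$, given a compact $K\subset V$ I cover $K$ by finitely many open sets of the form $f(W_i)$ with each $\overline{W_i}$ compact in $U$, which is possible by the open mapping theorem and compactness of $K$; then $L:=\bigcup_i \overline{W_i}$ is compact in $U$ with $f(L)\supseteq K$, so uniform convergence of $f^{\circ(m_{k_j}+1)}$ to $g$ on $L$ yields uniform convergence of $f^{\circ m_{k_j}}$ to $\widetilde g$ on $K$, giving $f(z)\in\mathcal{F}_f$.

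Once $f^{-1}(\mathcal{F}_f)=\mathcal{F}_f$ is established, taking complements in $\widehat{\mathbb{C}}$ produces $f^{-1}(\mathcal{J}_f)=\mathcal{J}_f$, and the surjectivity of the rational map $f$ then yields $f(\mathcal{F}_f)=f\bigl(f^{-1}(\mathcal{F}_f)\bigr)=\mathcal{F}_f$ and likewise $f(\mathcal{J}_f)=\mathcal{J}_f$. The main obstacle is the push-forward step: a naive approach via local inverses of $f$ breaks down at critical points, and the device above --- factoring the limit $g$ through $f$ and using a compact lift $L\subset U$ with $f(L)\supseteq K$ --- is what handles critical points uniformly, with no special case analysis needed.
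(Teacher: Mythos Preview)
Your proof is correct and follows essentially the same route as the paper's. The paper packages the key step as a standalone lemma --- a family $(g_\alpha)$ on $\phi(U)$ is normal if and only if $(g_\alpha\circ\phi)$ is normal on $U$ for any non-constant holomorphic $\phi$ --- and proves the hard direction via the same compact-lift trick you use (cover $K\subset\phi(U)$ by finitely many images $\phi(U_{i_j})$ of relatively compact open sets, then work on the resulting compact preimage); your inline argument with $L=\bigcup_i\overline{W_i}$ is the same device, and your factoring $g=\widetilde g\circ f$ plays the role of the paper's Cauchy-criterion formulation.
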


\begin{proof} As the Julia set is the complement of the Fatou set, it is enough
to prove the statements about the Fatou set. This is an immediate
consequence of the following lemma.
\begin{lemma}\label{normalinvariant}
Let $U$ be an open subset of $\widehat{\mathbb{C}}$, and let $\phi:U\to \widehat{\mathbb{C}}$ be holomorphic and
non-constant. Then the family $(g_\alpha:\phi(U)\to \widehat{\mathbb{C}})_{\alpha\in
\cal A}$ is normal if and only if $(f_\alpha:= g_\alpha\circ
\phi:U\to \widehat{\mathbb{C}})_{\alpha\in
\cal A}$ is normal.
\end{lemma}

\begin{remark}  Since $\phi$ is a non-constant analytic map, it is an open map.
Thus $\phi(U)$ is open and the family $g_\alpha$ is
defined on an open set.
\end{remark}

\begin{proof} ($\Rightarrow$) This is obvious. If the sequence $g_{\alpha_n}$
converges to $g$, then $f_{\alpha_n}:= g_{\alpha_n}\circ \phi$ converges to $g\circ
\phi$.

($\Leftarrow$) Assume that the sequence $f_{\alpha_k}:= g_{\alpha_k}\circ \phi$ is uniformly
convergent on every compact subset of $U$.
Let $K\subset \phi(U)$ be compact, and choose a cover of $U$ by
open subsets $U_i$ with compact closure.  Since $\phi$ is open, $\bigl\{\phi(U_i)\bigr\}$
is an open cover of $K$, and there exists a finite subcover
$\bigl\{\phi(U_{i_j})\bigr\}$.  The set
\[
K':= \phi^{-1}(K) \cap \bigcup_j \overline U_{i_j}
\]
is a compact subset of $U$ such that $\phi(K')=K$. Then,
\[
\sup_{z\in K} {\rm dist}_{\widehat{\mathbb{C}}}(g_{\alpha_n}(z), g_{\alpha_m}(z))= \sup_{w\in K'} {\rm dist}_{\widehat{\mathbb{C}}}(f_{\alpha_n}(w),
f_{\alpha_m}(w)) \to 0
\]
as $n,m\to \infty$. The lemma follows.
\end{proof}

To prove $\mathcal{F}_f=f^{-1}(\mathcal{F}_f)$, apply this lemma to the families
\[\Big\{f^{\circ (m+1)}:f^{-1}(\mathcal{F}_f)\to \widehat{\mathbb{C}}\Big\}\quad\hbox{and}\quad \Big\{f^{\circ m}:\mathcal{F}_f\to \widehat{\mathbb{C}}\Big\}\] and to the
holomorphic map $f:f^{-1}(\mathcal{F}_f)\to \mathcal{F}_f$. 
Applying $f$ to both sides of the equality  $\mathcal{F}_f=f^{-1}(\mathcal{F}_f)$, we deduce that $f(\mathcal{F}_f)=\mathcal{F}_f$. 
\end{proof}

\begin{proposition}
The Julia and Fatou sets of the $k$-th iterate $f^{\circ k}$ are equal to
the Julia and Fatou sets of $f$:
\[\mathcal{J}_{f^{\circ k}}=\mathcal{J}_f\quad\hbox{and}\quad
\mathcal{F}_{f^{\circ k}}=\mathcal{F}_f\]
\end{proposition}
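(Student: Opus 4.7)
The plan is to reduce everything to showing $\mathcal{F}_{f^{\circ k}}=\mathcal{F}_f$: the Julia equality then follows by taking complements in $\widehat{\mathbb{C}}$.

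The inclusion $\mathcal{F}_f\subseteq \mathcal{F}_{f^{\circ k}}$ is the easy direction. If $U$ is an open set on which $(f^{\circ n})_{n\in\mathbb{N}}$ is normal, then any sequence $(f^{\circ kn_j})_{j\in\mathbb{N}}$ extracted from the subfamily $(f^{\circ kn})_{n\in\mathbb{N}}$ is in particular a sequence drawn from the full family of iterates, so it admits a subsequence converging uniformly on compact subsets of $U$. Hence $(f^{\circ kn})_{n\in\mathbb{N}}$ is normal on $U$, which gives $U\subseteq \mathcal{F}_{f^{\circ k}}$.

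For the converse inclusion $\mathcal{F}_{f^{\circ k}}\subseteq \mathcal{F}_f$, I would argue as follows. Let $U\subseteq \mathcal{F}_{f^{\circ k}}$ be open and take an arbitrary sequence $(f^{\circ n_j})_{j\in\mathbb{N}}$. Perform the division with remainder $n_j = k q_j + r_j$ with $r_j\in\{0,1,\ldots,k-1\}$. By the pigeonhole principle, a subsequence (still indexed by $j$ for convenience) has constant remainder $r_j=r$, so
\[
f^{\circ n_j} = f^{\circ r}\circ f^{\circ k q_j}.
\]
By normality of $(f^{\circ kn})_{n\in\mathbb{N}}$ on $U$, extract a further subsequence such that $f^{\circ k q_j}$ converges uniformly on compact subsets of $U$ (for the spherical metric) to some map $g:U\to\widehat{\mathbb{C}}$. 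Since $\widehat{\mathbb{C}}$ is compact, $f^{\circ r}:\widehat{\mathbb{C}}\to\widehat{\mathbb{C}}$ is uniformly continuous for the spherical metric, so post-composing preserves uniform convergence on compacts: $f^{\circ r}\circ f^{\circ k q_j}$ converges uniformly on compact subsets of $U$ to $f^{\circ r}\circ g$. Hence $(f^{\circ n_j})$ admits a convergent subsequence, proving normality of $(f^{\circ n})_{n\in\mathbb{N}}$ on $U$, i.e.\ $U\subseteq \mathcal{F}_f$.

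The only subtle point, and what I expect to be the main thing to verify carefully, is the claim that post-composition with the fixed continuous map $f^{\circ r}$ preserves uniform convergence on compact subsets of $U$ with respect to the spherical metric. This is precisely where the compactness of the target $\widehat{\mathbb{C}}$ (hence uniform continuity of $f^{\circ r}$) is used, and without it one would have to worry about the limit map $g$ possibly taking the value $\infty$ on part of $U$.
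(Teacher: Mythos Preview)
Your proof is correct and follows essentially the same route as the paper's: both reduce to the Fatou sets, handle the easy inclusion by noting that a subfamily of a normal family is normal, and for the reverse inclusion use the pigeonhole principle on remainders modulo $k$ to write $f^{\circ n_j}=f^{\circ r}\circ f^{\circ kq_j}$, extract a convergent subsequence of $(f^{\circ kq_j})$, and then post-compose with the fixed map $f^{\circ r}$. Your explicit justification via uniform continuity of $f^{\circ r}$ on the compact sphere is a nice touch that the paper leaves implicit.
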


\begin{proof}
It is enough to prove this proposition for the Fatou sets.
If the family of iterates $f^{\circ n}$ is normal on an open set $U$,
then the subfamily $f^{\circ (kn)}$ is also normal. Hence,
$\mathcal{F}_f\subseteq \mathcal{F}_{f^{\circ k}}$.

Now, assume that $z\in \mathcal{F}_{f^{\circ k}} $ and let $U$ be a
neighborhood of $z$ on which the family of iterates $f^{\circ (kn)}$
is normal. For any sequence $(n_i)_{i\in \mathbb{N}}$, there exists an
integer $N$ such that $n_i=N~\mod(k)$ for infinitely many $i$'s. Let
us extract this subsequence which, with an abuse of notations, we still denote by $n_i$. By
assumption, the sequence $f^{\circ (n_i-N)}:U\to\widehat{\mathbb{C}}$ is normal and we can
extract a subsequence converging to a map $g:U\to\widehat{\mathbb{C}}$. As
$f^{\circ n_i}=f^{\circ N}\circ f^{\circ (n_i-N)}$, we can extract a subsequence of the
sequence $f^{\circ n_i}:U\to \widehat{\mathbb{C}}$ converging to $f^{\circ N}\circ g:U\to \widehat{\mathbb{C}}$. This
shows that the family $\bigl(f^{\circ n}:U\to \widehat{\mathbb{C}}\bigr)_{n\in\mathbb{N}}$ is normal and that
$\mathcal{F}_{f^{\circ k}}\subseteq \mathcal{F}_f$. \end{proof}

\section{Conjugacy}
\label{sec:3}

In every part of mathematics, the {\em morphisms} are the maps that preserve the structure. 
Thus the morphisms of a dynamical system $f:X \to X$ to a dynamical system $g:Y \to Y$ are 
the maps sending sequences of iterates of $f$ to sequences of iterates of $g$. 
This is just what maps $\phi:X \to Y$ such that $\phi\circ f = g \circ \phi$ do; 
such maps are called {\em semi-conjugacies} between $f$ and $g$. 
Indeed, we see by induction that then $\phi\circ f^{\circ n}= g^{\circ n} \circ \phi$ for all $n\ge 1$:  
by hypothesis it is true for $n=1$, and if it is true for $n-1$, then
\[
\phi\circ f^{\circ n}= \phi\circ f\circ  f^{\circ (n-1)}= g \circ \phi \circ f^{\circ (n-1)}= 
g \circ g^{\circ (n-1)}\circ \phi= g^{\circ n}\circ \phi.
\]
 When $\phi$ is invertible, $\phi\circ f = g \circ \phi$ can be rewritten $f=\phi^{-1}\circ g\circ \phi$, so $f$ and $g$ are {\em conjugate}: conjugacies are the isomorphisms of dynamical systems. When studying dynamical systems, we constantly try to construct conjugacies or semi-conjugacies to simpler dynamical systems, linear ones in particular. 

\begin{definition}[Conjugate rational maps]
Two rational maps $f$ and $g$ are {\em topologically
(respectively analytically) conjugate} if there exists a homeomorphism $\phi:\widehat{\mathbb{C}}\to \widehat{\mathbb{C}}$
(respectively an analytic isomorphism) such that $\phi\circ f=g\circ \phi$.
\end{definition}

One should think of $\phi$ as a change of coordinates on $\widehat{\mathbb{C}}$. The only
analytic isomorphisms of $\widehat{\mathbb{C}}$ are the M\"obius transformations
$z\mapsto (az+b)/(cz+d)$ and the only isomorphisms of $\mathbb{C}$ are the
affine maps $z\mapsto az+b$. Being a zero or a pole of a rational
map depends on the coordinates, whereas being a fixed point or a
critical point of a rational map does not depend on the choice of
coordinates. 

\begin{proposition}\label{juliaconjhomeo}
If $f$ and $g$ are conjugate (topologically or analytically) by $\phi:\widehat{\mathbb{C}}\to \widehat{\mathbb{C}}$,
then $\phi(\mathcal{F}_f)=\mathcal{F}_g$ and $\phi(\mathcal{J}_f)=\mathcal{J}_g$.
\end{proposition}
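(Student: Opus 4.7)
The plan is to reduce to a statement about Fatou sets, then push normality across the conjugacy using uniform continuity of $\phi$ and $\phi^{-1}$ on the compact sphere.

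Since $\mathcal{J}_f = \widehat{\mathbb{C}} \setminus \mathcal{F}_f$ and $\mathcal{J}_g = \widehat{\mathbb{C}} \setminus \mathcal{F}_g$ and $\phi$ is a bijection of $\widehat{\mathbb{C}}$, the two equalities $\phi(\mathcal{F}_f) = \mathcal{F}_g$ and $\phi(\mathcal{J}_f) = \mathcal{J}_g$ are equivalent. Moreover, $\phi^{-1}$ is also a conjugacy (from $g$ to $f$), so by symmetry it suffices to prove only one inclusion, namely $\phi(\mathcal{F}_f) \subseteq \mathcal{F}_g$. The key algebraic input is the iterated conjugacy relation $g^{\circ n} = \phi \circ f^{\circ n} \circ \phi^{-1}$, which was already derived in the paragraph preceding the definition of conjugate maps.

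Now fix $z_0 \in \mathcal{F}_f$ and a neighborhood $U$ of $z_0$ on which the family $(f^{\circ n})_{n \in \mathbb{N}}$ is normal. Set $V := \phi(U)$, which is an open neighborhood of $\phi(z_0)$ because $\phi$ is a homeomorphism. To show $\phi(z_0) \in \mathcal{F}_g$, I want to extract from any sequence $(g^{\circ n_k})_k$ a subsequence that converges uniformly on compact subsets of $V$. Applying normality of $(f^{\circ n})$ on $U$ first, I extract (after passing to a subsequence) a limit $f^{\circ n_k} \to h$ uniformly on compact subsets of $U$, where $h : U \to \widehat{\mathbb{C}}$ is continuous for the spherical metric.

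The final step is transporting this convergence back through $\phi$. Given a compact $K \subset V$, the set $K' := \phi^{-1}(K) \subset U$ is compact, so $f^{\circ n_k} \to h$ uniformly on $K'$. Because $\widehat{\mathbb{C}}$ is a compact metric space, any homeomorphism $\phi : \widehat{\mathbb{C}} \to \widehat{\mathbb{C}}$ is uniformly continuous (and this covers both the topological and the analytic case at once). Hence $\phi \circ f^{\circ n_k} \to \phi \circ h$ uniformly on $K'$, which by the substitution $w = \phi^{-1}(z)$ on $K$ gives $g^{\circ n_k}(z) = \phi(f^{\circ n_k}(\phi^{-1}(z))) \to \phi(h(\phi^{-1}(z)))$ uniformly on $K$. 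This shows $(g^{\circ n})$ is normal on $V$, so $\phi(z_0) \in \mathcal{F}_g$.

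The only delicate point is the last step, and the main observation that resolves it is the compactness of $\widehat{\mathbb{C}}$: without it one would need the analyticity of $\phi$ and an appeal to Lemma \ref{normalinvariant}, but uniform continuity handles both the topological and analytic cases simultaneously and makes the push-forward of uniform convergence immediate.
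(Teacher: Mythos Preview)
Your proof is correct and follows essentially the same approach as the paper: both arguments rest on the observation that the family $(f^{\circ n}:U\to\widehat{\mathbb{C}})$ is normal if and only if the conjugate family $(\phi\circ f^{\circ n}\circ\phi^{-1}:\phi(U)\to\widehat{\mathbb{C}})$ is normal. The paper simply asserts this equivalence in one line, whereas you spell out the mechanism (uniform continuity of $\phi$ on the compact sphere) that makes it work even for a merely topological conjugacy, which is a useful clarification the paper leaves implicit.
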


\begin{proof}
The family $(f^{\circ n}:U\to \widehat{\mathbb{C}})_{n\in\mathbb{N}}$ is normal if and only if the family
$(\phi\circ f^{\circ n}\circ \phi^{-1}: \phi(U)\to \widehat{\mathbb{C}})_{n\in
\mathbb{N}}$ is normal.
\end{proof}

\begin{example}{Example} We will see below that the Newton's methods 
\[N_P:z\mapsto 
z-\frac{P(z)}{P'(z)}\quad \hbox{and} \quad N_Q: z\mapsto  z - \frac{Q(z)}{Q'(z)}\] 
are analytically
conjugate as soon as $Q(z) = \lambda P(\alpha z+\beta)$. 
\end{example}

\begin{exo}
Let $a\neq b$ be complex numbers and set $P(z) :=(z-a)(z-b)$. 
\begin{enumerate}
\item Show that the Newton's method $N_P$ is conjugate to  $f:w\mapsto w^2$ 
via the isomorphism $z\mapsto w := (z-a)/(z-b)$. 
\item Determine the Julia set of $N_P$ and the set $\omega(z)$ for $z\in \widehat{\mathbb{C}}\setminus \mathcal{J}_{N_P}$. 
\end{enumerate}
\end{exo}

\section{Periodic point and critical points}
\label{sec:4}
Periodic points and critical points are important objects in holomorphic dynamics as we shall see in this section.

\subsection{Periodic points}
\label{subsec:4.1}

\begin{definition}[Multiplier at a fixed point]
A {\em fixed point} of $f$ is a point $\alpha$ such that $f(\alpha)=\alpha$. 
The derivative of $f$ at $\alpha$ is an endomorphism of the tangent line ${\rm D}_\alpha f: T_\alpha\widehat{\mathbb{C}}\to T_\alpha\widehat{\mathbb{C}}$, thus a multiplication by a number $\lambda\in \mathbb{C}$. The eigenvalue  $\lambda$ of the endomorphism ${\rm D}_\alpha f: T_\alpha\widehat{\mathbb{C}}\to T_\alpha\widehat{\mathbb{C}}$ is called the {\em multiplier} of $f$ at $\alpha$. The fixed point is called
\begin{itemize}
\item[$\bullet$]{\em superattracting} if $\lambda=0$;
\item[$\bullet$]{\em attracting} if $0<|\lambda|<1$;
\item[$\bullet$]{\em repelling} if $|\lambda|>1$;
\item[$\bullet$]{\em indifferent} if $|\lambda|=1$, and more precisely
\begin{itemize}
\item {\em parabolic} if $\lambda$ is a root of 1,
\item {\em elliptic} otherwise, that is if $\lambda=e^{2\pi i \theta}$ with $\theta\in\mathbb{R}\setminus\mathbb{Q}$.
\end{itemize}
\end{itemize}
\end{definition}

\begin{exo} Let $\alpha$ be a fixed point of a holomorphic map $f$. 
\begin{enumerate}
\item 
Show that  if $\alpha\neq \infty$, then the multiplier of $f$ at $\alpha$ is 
$\lambda = f'(\alpha)$.
\item 
Show that if $\alpha=\infty$ and if $f(z)\sim az$ as $z\to \infty$, then the multiplier is $\lambda = 1/a$, whereas if $f(z)\sim az^k$ with $k\geq 2$, the multiplier is $\lambda=0$. 
\item Show that if $f$ is a polynomial of degree $d\geq 2$, then $\infty$ is a superattracting fixed point of $f$. 
\end{enumerate}
\end{exo}

The multiplier at a fixed point is invariant under analytic conjugacy. In other words,  if  $\phi:(\widehat{\mathbb{C}},\alpha)\to (\widehat{\mathbb{C}},\beta)$ is a local isomorphism conjugating 
$f:(\widehat{\mathbb{C}},\alpha)\to (\widehat{\mathbb{C}},\alpha)$ to $g:(\widehat{\mathbb{C}},\beta)\to (\widehat{\mathbb{C}},\beta)$, then the multiplier of $f$ at $\alpha$ is equal to the multiplier of $g$ at $\beta$. Indeed, the derivative of $\phi\circ f = g\circ \phi$ at $\alpha$ is 
\[{\rm D}_\alpha \phi\circ {\rm D}_{\alpha} f = {\rm D}_\beta g \circ {\rm D}_\alpha \phi.\]
In other words, ${\rm D}_\alpha \phi$ conjugates ${\rm D}_\alpha f:T_\alpha \widehat{\mathbb{C}}\to T_\alpha\widehat{\mathbb{C}}$ to ${\rm D}_\beta g:T_\beta \widehat{\mathbb{C}}\to T_\beta\widehat{\mathbb{C}}$ and the two endomorphisms have the same eigenvalue.

\begin{definition}[Periodic points]
A {\em periodic point} of $f$ is a fixed point of $f^{\circ p}$ for some 
$p\geq 1$. The smallest such integer $p\geq 1$ is called the period of $\alpha$.
In this case, we say that $\left<\alpha,f(\alpha),\ldots,f^{\circ (p-1)}(\alpha)\right>$
is a {\em cycle of $f$}. 
\end{definition}

\begin{proposition}
If $\left<\alpha_1,\alpha_2,\ldots,\alpha_{p}\right>$ is a cycle, then the multiplier of $f^{\circ p}$ at all points of the cycle is the same.
\end{proposition}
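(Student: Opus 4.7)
The plan is to read off the multiplier of $f^{\circ p}$ at each cycle point as a product of derivatives of $f$ via the chain rule, and to observe that this product is manifestly invariant under cyclic permutation of the starting index.

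First I would choose, for each $i \in \mathbb{Z}/p\mathbb{Z}$, a basis vector $e_i \in T_{\alpha_i}\widehat{\mathbb{C}}$. Since $f(\alpha_i)=\alpha_{i+1}$, the derivative ${\rm D}_{\alpha_i} f : T_{\alpha_i}\widehat{\mathbb{C}}\to T_{\alpha_{i+1}}\widehat{\mathbb{C}}$ is a linear map between one-dimensional complex vector spaces, so there is a unique scalar $\lambda_i \in \mathbb{C}$ with ${\rm D}_{\alpha_i} f(e_i)=\lambda_i e_{i+1}$. Phrasing things in terms of intrinsic tangent-space derivatives (rather than $f'$ in a single chart) has the side benefit of covering the case $\alpha_i=\infty$ uniformly.

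Then the chain rule yields
\[
{\rm D}_{\alpha_i} f^{\circ p} = {\rm D}_{\alpha_{i-1}} f \circ {\rm D}_{\alpha_{i-2}} f \circ \cdots \circ {\rm D}_{\alpha_{i+1}} f \circ {\rm D}_{\alpha_i} f,
\]
with indices taken modulo $p$. Evaluated on $e_i$, this endomorphism acts as multiplication by $\lambda_i \lambda_{i+1} \cdots \lambda_{i+p-1} = \prod_{j=1}^{p} \lambda_j$, which is by definition the multiplier of $f^{\circ p}$ at $\alpha_i$. This product is visibly the same for every $i$, which is the desired conclusion.

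There is no real obstacle here: the content is just the chain rule together with the commutativity of multiplication in $\mathbb{C}$. The only point worth sanity-checking is that, although each individual $\lambda_i$ depends on the bases $e_i, e_{i+1}$, a rescaling $e_i \mapsto c_i e_i$ replaces $\lambda_i$ by $\lambda_i c_i / c_{i+1}$, and these correction factors telescope around the cycle, so the product $\prod_{j=1}^p \lambda_j$, and hence the multiplier, is well defined.
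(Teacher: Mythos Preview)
Your argument is correct and essentially the same in spirit as the paper's: both rest on the chain rule to express ${\rm D}_{\alpha_i} f^{\circ p}$ as the composition of the ${\rm D}_{\alpha_j} f$ around the cycle. The execution differs slightly. You pick bases, read off scalars $\lambda_j$, and invoke the commutativity of multiplication in $\mathbb{C}$ to see that $\prod_j \lambda_j$ is independent of the starting index; your telescoping remark cleanly handles the basis dependence. The paper instead splits into two cases: if some ${\rm D}_{\alpha_j} f$ vanishes the multiplier is $0$ at every point, and otherwise $f$ is a local isomorphism near each $\alpha_i$, so $f$ conjugates $f^{\circ p}$ at $\alpha_i$ to $f^{\circ p}$ at $\alpha_{i+1}$, and one appeals to the already-established conjugacy invariance of the multiplier. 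Your route is a touch more direct and avoids the case split; the paper's route is more conceptual and ties back to the earlier discussion of conjugacy.
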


\begin{proof}
According to the Chain Rule, the derivative of $f^{\circ p}$ at any point of the cycle is the composition of the derivatives of $f$, as linear transformations, along the cycle. 
If the derivative of $f$ vanishes at a point of the cycle, then the multiplier is $0$ at all points of the cycle. 
If the derivative never vanishes, then $f$ is locally invertible at each point of the cycle and $f$ conjugates $f^{\circ p}$ at $\alpha_i$ to $f^{\circ p}$ at $\alpha_{i+1}$. Thus, the multiplier of $f^{\circ p}$ at $\alpha_i$ is equal to the multiplier of $f^{\circ p}$ at $\alpha_{i+1}$. 
\end{proof}

\begin{definition}[Multiplier along a cycle]
The {\em multiplier} of $f$ along a cycle of period $p$ is the multiplier of $f^{\circ p}$ at any point of the cycle. The cycle is {\em superattracting (respectively attracting, indifferent or repelling)} for $f$ if the points of the cycle are superattracting (respectively attracting, indifferent or repelling) fixed points of $f^{\circ p}$. 
\end{definition}

If $\left<\alpha_1,\alpha_2,\ldots,\alpha_{p}\right>$ is a cycle of period $p$ avoiding $\infty$, then its multiplier is given by the product
\[\lambda = (f^{\circ p})'(\alpha_1) = \prod_{i=1}^p f'(\alpha_i).\]

\begin{proposition}\label{wherefixedpoints}
Let $f:\widehat{\mathbb{C}}\to \widehat{\mathbb{C}}$ be a rational map, and let $\alpha$ be
a periodic point of $f$. If $\alpha$ is superattracting or attracting,
then $\alpha\in \mathcal{F}_f$.
If $\alpha$ is repelling, then $\alpha\in \mathcal{J}_f$.
\end{proposition}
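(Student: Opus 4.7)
The strategy is to reduce to the fixed-point case and then analyze the two situations using the local behavior of $f$ near $\alpha$.

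First, using the identity $\mathcal{F}_{f^{\circ p}} = \mathcal{F}_f$ and $\mathcal{J}_{f^{\circ p}} = \mathcal{J}_f$ established in the previous proposition, I may replace $f$ by $f^{\circ p}$ and assume that $\alpha$ is a \emph{fixed} point of $f$ with multiplier $\lambda$. Moreover, by conjugating with a M\"obius transformation (which preserves the Fatou and Julia sets by Proposition \ref{juliaconjhomeo}) I can further assume $\alpha \in \mathbb{C}$, so that $\lambda = f'(\alpha)$ in the standard sense.

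For the attracting/superattracting case $|\lambda| < 1$: choose any $c$ with $|\lambda| < c < 1$. The estimate $f(z) - \alpha = \lambda(z-\alpha) + O\bigl((z-\alpha)^2\bigr)$ shows there is a disk $D$ centered at $\alpha$ such that $|f(z)-\alpha| \le c|z-\alpha|$ for every $z \in D$. In particular $f(D) \subset D$, and by induction $|f^{\circ n}(z) - \alpha| \le c^n |z-\alpha|$ on $D$. Thus $f^{\circ n} \to \alpha$ uniformly on $D$, so the family of iterates is trivially normal on $D$, proving $\alpha \in \mathcal{F}_f$.

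For the repelling case $|\lambda| > 1$: argue by contradiction and assume $\alpha \in \mathcal{F}_f$. Pick a connected neighborhood $U \subset \mathcal{F}_f$ of $\alpha$ on which the family $(f^{\circ n})_{n \in \mathbb{N}}$ is normal, and extract a subsequence $f^{\circ n_k}$ converging uniformly on compact subsets of $U$ (for the spherical metric) to some holomorphic map $g: U \to \widehat{\mathbb{C}}$. Since $f^{\circ n_k}(\alpha) = \alpha$ for all $k$, we have $g(\alpha) = \alpha \in \mathbb{C}$, so $g$ takes a finite value at $\alpha$ and is therefore $\mathbb{C}$-valued on a neighborhood of $\alpha$; in particular the convergence $f^{\circ n_k} \to g$ is uniform for the \emph{Euclidean} metric near $\alpha$. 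Then the Weierstrass theorem gives $(f^{\circ n_k})'(\alpha) \to g'(\alpha) \in \mathbb{C}$. The main step is the chain-rule computation $(f^{\circ n_k})'(\alpha) = \lambda^{n_k}$, whose modulus $|\lambda|^{n_k}$ tends to $\infty$. This contradicts the convergence to a finite limit, hence $\alpha \in \mathcal{J}_f$.

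The only subtle point is justifying that the limit $g$ is $\mathbb{C}$-valued near $\alpha$ (so one may invoke convergence of derivatives in the ordinary sense rather than the spherical sense); this is immediate from $g(\alpha)=\alpha\neq \infty$ together with continuity of $g$ for the spherical metric. Everything else is a direct application of the contracting fixed-point estimate and of the chain rule.
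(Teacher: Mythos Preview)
Your proof is correct and follows essentially the same approach as the paper: reduce to a fixed point by passing to an iterate and conjugating by a M\"obius transformation, then use local contraction in the (super)attracting case and the blow-up of $(f^{\circ n})'(\alpha)=\lambda^n$ in the repelling case. The only cosmetic differences are that in the attracting case the paper invokes Montel's theorem on the bounded invariant neighborhood rather than writing out the uniform convergence to $\alpha$, and in the repelling case the paper simply asserts that $(f^{\circ n})'(\alpha)\to\infty$ precludes normality, whereas you spell this out via a contradiction using the Weierstrass theorem on convergence of derivatives.
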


\begin{proof} Replacing $f$ by
$f^{\circ p}$ if necessary, we may assume that $\alpha$ is a fixed
point of $f$. Conjugating by a M\"obius transformation if necessary, we may
assume that $\alpha=0$. If $\alpha$ is (super)attracting, i.e.,
$|f'(\alpha)|<1$, then there exists a bounded neighborhood $U$ of $\alpha$
which is mapped into itself. In particular, for all $n\in \mathbb{N}$,
$f^{\circ n}(U)\subset U$. Hence, the family $(f^{\circ
n}:U\to\widehat{\mathbb{C}})_{n\in\mathbb{N}}$ is normal, which implies that $\alpha\in
\mathcal{F}_f$. If $\alpha$ is repelling, then 
\[(f^{\circ n})'(\alpha) = \bigl[f'(\alpha)\bigr]^n\underset{n\to +\infty}\longrightarrow \infty.\]
So, the family $(f^{\circ n})_{n\in \mathbb{N}}$ cannot be normal in a
neighborhood of $\alpha$. \end{proof}

A cycle is superattracting if and only if it contains a point at which the derivative of $f$ vanishes, i.e., a {\em critical point} of $f$. 

\subsection{Critical points and critical values}
\label{subsec:4.2}

We will see that the dynamical properties of a rational map $f:\widehat{\mathbb{C}}\to \widehat{\mathbb{C}}$
are strongly related to the dynamical behaviour of the critical points of
$f$.  For example, the Julia set of a polynomial $f$ is
connected if and only if the orbit of every critical point of $f$ is
bounded.

\begin{definition}[Critical points and critical values]
Let $U$ and $V$ be two Riemann surfaces and $f:(U,x)\to (V,y)$ be an 
analytic map which is not locally constant at $x$. 
The point $x$ is a {\em critical point} if the derivative ${\rm D}_xf:T_xU\to T_yV$ is zero. In that case, $y$ is a {\em critical value}. 
\end{definition}

More precisely, in local coordinates $z$ vanishing at $x$ and $w$ vanishing at $y=f(x)$, the expression
of $f$ is of the form
\[z\mapsto aw^k+{O}(w^{k+1})\quad \text{for some integer }k\geq 1~\text{and some complex number } a\neq 0.\]
The integer $k$ does not depend on the choice of coordinates and is called the {\em local degree} of $f$ at $x$. 
The point $x$ is a critical point if and only if $k\geq 2$. In that case, $x$ is called a {\em critical point of $f$ of multiplicity $k-1$}.

\begin{example}{Example} The map $z\mapsto z^3$ has a critical
point at $0$.  The local degree of the map at $0$ is $3$, and
the multiplicity of this critical point is $2$.
\end{example}

We shall use the following result which is a consequence of the Riemann-Hurwitz Formula (which will not be proved in those notes). 

\begin{definition}[Euler characteristic]
If $U$ is an open subset of $\widehat{\mathbb{C}}$ with $m$ boundary components, the {\em Euler characteristic of $U$} is 
\[\chi(U) := 2-m.\]
\end{definition}

\begin{theorem}\label{theo:RH}
Let $f:\widehat{\mathbb{C}} \to \widehat{\mathbb{C}}$ be a rational map of degree $d$, $V\subseteq \widehat{\mathbb{C}}$ be an open set and $U:=f^{-1}(V)$.  Then, 
\[\chi(U) = d\cdot \chi(V) - N\]
where $N$ is the number of critical points of $f$ in $U$, counting multiplicities. 
\end{theorem}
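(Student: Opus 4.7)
The plan is to derive the formula via a cell decomposition (triangulation) argument, exploiting the fact that $f:U\to V$ is a branched cover of degree $d$, with branching exactly at the critical points of $f$ contained in $U$. The set $C\subset \widehat{\mathbb{C}}$ of critical values is finite (there are only finitely many critical points), so $f:\widehat{\mathbb{C}}\setminus f^{-1}(C)\to \widehat{\mathbb{C}}\setminus C$ is a genuine covering map of degree $d$. This is the only serious input beyond elementary combinatorics.

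First I would choose a finite CW-decomposition $\mathcal{T}$ of $\widehat{\mathbb{C}}$ compatible with $V$ in the sense that $V$ is a union of open cells of $\mathcal{T}$, and whose $0$-skeleton contains every critical value of $f$ lying in $V$. One can obtain such a $\mathcal{T}$ by taking a very fine triangulation of $\widehat{\mathbb{C}}$ adapted to the boundary structure of $V$, adding the finitely many critical values as extra vertices. With such a choice, every open $1$-cell and every open $2$-cell of $V$ avoids all critical values of $f$ in its interior.

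Next I would lift $\mathcal{T}|_V$ to $U$ via $f$. Each open cell $\sigma$ of $V$ of dimension $1$ or $2$ is simply connected and disjoint from the critical values, so by the covering property its preimage $f^{-1}(\sigma)\subset U$ consists of exactly $d$ disjoint open cells, each mapped homeomorphically onto $\sigma$. For a vertex $v$ of $V$, denote by $x_1,\dots,x_r$ its preimages in $U$ and by $k_1,\dots,k_r$ the corresponding local degrees. Since $v\in V$ forces $f^{-1}(v)\subset U$, we have $k_1+\cdots+k_r=d$, and the multiplicity of $x_i$ as a critical point is $k_i-1$. Hence
\[
r \;=\; d-\sum_{i=1}^r(k_i-1) \;=\; d - N_v,
\]
where $N_v$ denotes the number of critical points of $f$ above $v$ counted with multiplicity.

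Finally I would assemble the count. Writing $V_0$, $E$, $F$ for the numbers of vertices, edges and faces of $\mathcal{T}|_V$, the induced decomposition of $U$ has $\sum_v(d-N_v)=dV_0-N$ vertices, $dE$ edges and $dF$ faces, so the cellular Euler characteristics satisfy
\[
\chi_{\mathrm{cell}}(U)\;=\;(dV_0-N)-dE+dF\;=\;d\bigl(V_0-E+F\bigr)-N\;=\;d\,\chi_{\mathrm{cell}}(V)-N.
\]
The remaining step, which I expect to be the main obstacle, is to identify this cellular Euler characteristic with the quantity $2-m$ appearing in the definition. For a connected open subset $W\subset \widehat{\mathbb{C}}$ with $m$ boundary components, $W$ is homotopy equivalent to a wedge of $m-1$ circles, so $V_0-E+F = 1-(m-1)=2-m$; extending additively over connected components of $U$ and $V$ (and handling the degenerate case $m=0$, i.e.\ $W=\widehat{\mathbb{C}}$, separately) yields the stated equality. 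A cleaner workaround is to replace $V$ by a compact exhaustion by surfaces with boundary, apply the classical Riemann--Hurwitz formula for compact branched covers, and pass to the limit; this avoids any subtlety with non-tame boundary components.
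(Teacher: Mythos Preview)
The paper does not prove this theorem at all: it is stated explicitly as ``a consequence of the Riemann--Hurwitz Formula (which will not be proved in those notes)''. So there is nothing to compare against, and your task is really to supply a proof that the authors omit.

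Your argument is the standard triangulation proof of Riemann--Hurwitz and is correct in outline. Two comments. First, your Step~1 (choosing a finite CW-decomposition of $\widehat{\mathbb{C}}$ in which $V$ is a subcomplex) genuinely fails when $\partial V$ is not tame, for instance when $V$ is a Fatou component with fractal boundary, which is exactly the situation in which the paper later applies the theorem. You already flag this and propose the right fix: exhaust $V$ by compactly contained subsurfaces $V_n$ with smooth boundary and the same number of boundary components, apply the compact Riemann--Hurwitz formula to $f:f^{-1}(V_n)\to V_n$, and observe that both sides stabilise. That workaround is not optional here; it is the actual proof in the generality the paper needs, so promote it from a parenthetical to the main line of argument. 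Second, the paper's definition $\chi(U)=2-m$ is only sensible for \emph{connected} $U$; since $U=f^{-1}(V)$ need not be connected even when $V$ is, you should state explicitly that $\chi$ is extended additively over connected components before invoking it on $U$.
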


\begin{exo}
Let $f:\widehat{\mathbb{C}}\to \widehat{\mathbb{C}}$ be a rational map of degree $d$. 
\begin{enumerate}
\item Prove that $f$ has $2d-2$ critical points counting multiplicities. 
\item Prove that $f$ has at least $2$ distinct critical values. 
\item Prove that if $f$ is a polynomial, then $\infty$ is a critical point of multiplicity $d-1$ and that there are $d-1$ critical points in $\mathbb{C}$ counting multiplicities. 
\end{enumerate}
\end{exo}

\section{Description of the Julia set}
\label{sec:5}

\subsection{Topology of the Julia set}
\label{subsec:5.1}

\begin{proposition}
The Julia set $\mathcal{J}_f$ is non empty and compact.
\end{proposition}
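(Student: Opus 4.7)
The proof splits naturally into compactness and non-emptiness. Compactness is immediate from Definition~\ref{Juliadef}: the Fatou set $\mathcal{F}_f$ is defined as the largest open subset of $\widehat{\mathbb{C}}$ on which the iterates form a normal family, hence is open by construction, so $\mathcal{J}_f = \widehat{\mathbb{C}} \setminus \mathcal{F}_f$ is closed in the compact space $\widehat{\mathbb{C}}$, and therefore compact.

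For non-emptiness (where $d := \deg f \geq 2$ is implicit in the theory), my plan is to argue by contradiction, exploiting the fact that $\deg(f^{\circ n}) = d^n \to \infty$. Suppose $\mathcal{J}_f = \emptyset$, so that $(f^{\circ n})$ is normal on all of $\widehat{\mathbb{C}}$. Extract a subsequence $f^{\circ n_k}$ converging uniformly on $\widehat{\mathbb{C}}$ (for the spherical metric) to a holomorphic limit $g: \widehat{\mathbb{C}}\to\widehat{\mathbb{C}}$, and then rule out every possibility for $g$.

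If $g\equiv c$ is constant, I would pick any $w \neq c$ and a small spherical disk $D$ around $c$ avoiding $w$; for large $k$, uniform convergence gives $f^{\circ n_k}(\widehat{\mathbb{C}}) \subset D$, so $w$ has no preimage under $f^{\circ n_k}$, contradicting the surjectivity of the non-constant rational map $f^{\circ n_k}$. If $g$ is non-constant, it is rational of some fixed finite degree $d_g \geq 1$. I would then pick a regular value $w$ of $g$, so $g-w$ has exactly $d_g$ simple zeros, and isolate them in pairwise disjoint closed disks $\overline{D_j}$. On the compact complement $\widehat{\mathbb{C}} \setminus \bigcup_j D_j$, $g-w$ is bounded away from $0$, and by uniform convergence so is $f^{\circ n_k}-w$ for large $k$, hence no preimages of $w$ lie there; inside each $D_j$, Hurwitz's theorem (or Rouché) yields exactly one zero of $f^{\circ n_k}-w$ for large $k$. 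So $w$ has exactly $d_g$ preimages under $f^{\circ n_k}$, contradicting the degree count $d^{n_k} \to \infty$ as soon as $d^{n_k} > d_g$.

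The main obstacle is conceptual rather than technical: one has to recognize that the normality hypothesis would force the iterates to approximate a map of bounded degree, while the iterates themselves have exponentially growing degree. Once that mismatch is identified, the case split on whether the limit $g$ is constant or not, combined with a Hurwitz-style preimage count, closes the argument with only routine bookkeeping.
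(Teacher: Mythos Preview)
Your proof is correct and follows essentially the same strategy as the paper's: compactness is immediate from the definition, and non-emptiness is obtained by contradiction via the degree mismatch between $f^{\circ n_k}$ (degree $d^{n_k}\to\infty$) and any uniform limit. The only difference is in how the contradiction is cashed out: the paper invokes homotopy invariance of the topological degree in one sentence (so the limit $f_0$, constant or not, has some fixed finite degree that the approximating iterates must eventually share), whereas you unwind this into a case split and a Hurwitz/Rouch\'e preimage count. Both routes are standard and equivalent; the paper's is shorter, yours is more self-contained from a purely complex-analytic standpoint.
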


\begin{proof} If the Julia set were empty, there would be a subsequence
of $f^{\circ m}$ converging uniformly on $\widehat{\mathbb{C}}$; let $f_0$ be
the limit. The rational map $f_0$ must have some finite degree. Since
the degree is an invariant of homotopy, the approximating iterates
$f^{\circ m_i}$ must have the same degree.  But the degree of
$f^{\circ m_i}$ is $d^{m_i}$, which leads to a contradiction.

Evidently the Fatou set $\mathcal{F}_f$ is open, hence the Julia set
$\mathcal{J}_f$ is closed. Since $\widehat{\mathbb{C}}$ is compact, the Julia set $\mathcal{J}_f$ is
also compact. 
\end{proof}

\begin{proposition}\label{perfect}
Either the Julia set $\mathcal{J}_f$ has empty interior, or it
is the entire Riemann sphere.
\end{proposition}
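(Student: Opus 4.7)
The plan is to argue by contradiction using Montel's theorem together with the complete invariance of $\mathcal{J}_f$ established in Proposition \ref{invariance}. Assume that $\mathcal{J}_f$ has non-empty interior and pick a non-empty open set $U \subset \mathcal{J}_f$. The goal is to show that $\mathcal{J}_f$ must then be all of $\widehat{\mathbb{C}}$.

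First, by complete invariance, $f^{\circ n}(U) \subset \mathcal{J}_f$ for every $n \geq 0$, so the open set
\[
W := \bigcup_{n \geq 0} f^{\circ n}(U)
\]
is contained in $\mathcal{J}_f$. On the other hand, since $U \subset \mathcal{J}_f$, the family of iterates $(f^{\circ n}:U\to \widehat{\mathbb{C}})_{n\in \mathbb{N}}$ is \emph{not} normal on $U$ (otherwise $U$ would lie in the Fatou set). By Montel's theorem, any family of holomorphic maps from $U$ to $\widehat{\mathbb{C}}$ omitting at least three values is normal; therefore the family of iterates on $U$ can omit at most two values in $\widehat{\mathbb{C}}$, which means that $W$ contains the complement of a set of cardinality at most two.

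Consequently $\mathcal{J}_f \supseteq W \supseteq \widehat{\mathbb{C}} \setminus E$ for some finite set $E$ with $|E|\leq 2$. Since $\mathcal{J}_f$ is closed (proved in the previous proposition) and $\widehat{\mathbb{C}} \setminus E$ is dense in $\widehat{\mathbb{C}}$, we conclude $\mathcal{J}_f = \widehat{\mathbb{C}}$, contradicting the assumption that $\mathcal{J}_f$ has non-empty interior only if we started from the assumption $\mathcal{J}_f \neq \widehat{\mathbb{C}}$. This dichotomy is exactly the statement.

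No step is really an obstacle here: the whole proof reduces to combining complete invariance with Montel's theorem, and the only mild subtlety is to remember that Montel gives us normality when \emph{three} values are omitted, so we get a union that misses at most two points rather than, say, a dense union. No appeal to the refined Picard-type exceptional set is needed, since the bound ``at most two omitted values'' is already enough to conclude by closedness of $\mathcal{J}_f$.
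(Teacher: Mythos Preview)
Your proof is correct and follows essentially the same approach as the paper: both combine complete invariance of $\mathcal{J}_f$ with Montel's theorem applied to an open $U\subset \mathcal{J}_f$. The paper phrases it as ``the iterates avoid $\mathcal{F}_f$, which if non-empty is open and hence omits more than two values, forcing normality,'' while you take the contrapositive of Montel and finish with a density/closedness step; the logical content is the same.
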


\begin{proof} 
Suppose $U \subset \mathcal{J}_f$ is an open subset of $\widehat{\mathbb{C}}$.  The family
$\{f^{\circ m}:U\to\widehat{\mathbb{C}}\}$ must avoid $\mathcal{F}_f$, which is open
and will contain more than two points if it is not empty. But this
would make $\{f^{\circ m}:U\to\widehat{\mathbb{C}}\}$ normal, a contradiction. Thus if the
interior of $\mathcal{J}_f$ is non empty, $\mathcal{J}_f$ is the entire Riemann sphere.
\end{proof}

We have seen that the Julia set $\mathcal{J}_f$ is completely invariant. We will now
determine whether there are any other completely invariant
closed sets than $\mathcal{J}_f$. For this purpose, we will use the Riemann
Hurwitz formula.

\begin{proposition}\label{totinvfinite}
Let $E$ be a completely invariant closed set. If $E$ is finite, then $E$ contains at most two points.
\end{proposition}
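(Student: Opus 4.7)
The plan is to apply the Riemann--Hurwitz formula (Theorem~\ref{theo:RH}) directly to the complement of $E$. Since $E$ is finite and closed (automatic), its complement $V := \widehat{\mathbb{C}}\setminus E$ is open, and because $E$ is completely invariant we have $f^{-1}(V) = \widehat{\mathbb{C}}\setminus f^{-1}(E) = \widehat{\mathbb{C}}\setminus E = V$. Thus $U := f^{-1}(V)$ coincides with $V$.

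Let $n$ denote the number of points of $E$. Each point of $E$ is an isolated boundary point of $V$, so $V$ has exactly $n$ boundary components and $\chi(V) = 2-n$. Similarly $\chi(U) = 2-n$. Applying Theorem~\ref{theo:RH} to the rational map $f$ of degree $d\geq 2$ with this choice of $V$ gives
\[
2-n \;=\; d\,(2-n) - N,
\]
where $N\geq 0$ is the total multiplicity of critical points of $f$ lying in $U$. Rearranging,
\[
N \;=\; (d-1)(2-n).
\]
Since $d-1\geq 1$ and $N\geq 0$, we must have $2-n\geq 0$, i.e.\ $n\leq 2$, which is the claim.

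The proof is essentially just bookkeeping once one recognises that complete invariance forces $f^{-1}(V)=V$, so the only real step is to unpack the Riemann--Hurwitz formula correctly. The one point that deserves care is the count of boundary components: because $E$ is finite, each of its points is an isolated point of $\widehat{\mathbb{C}}\setminus V$ and hence contributes one boundary component, which is what makes $\chi(V)=2-n$. No further analysis of the dynamics on $E$ itself is needed.
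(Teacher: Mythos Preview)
Your proof is correct and follows essentially the same approach as the paper: both arguments apply the Riemann--Hurwitz formula to the completely invariant complement $\widehat{\mathbb{C}}\setminus E$, obtaining $\chi = d\chi - N$ and concluding $\chi \geq 0$, hence $|E|\leq 2$. The only cosmetic difference is that the paper phrases it as applying Riemann--Hurwitz to the self-map $f:U\to U$, whereas you explicitly verify $U=f^{-1}(V)=V$ to match the formulation of Theorem~\ref{theo:RH}.
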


\begin{proof} If $E$ is finite, then its complement $U$ is connected and
completely invariant. Its Euler characteristic $\chi(U)=2-{\rm card} (E)$
is finite. Let us denote by $N$ the number of critical points of $f$
in $U$ counted with multiplicity. The Riemann-Hurwitz formula
applied to $f:U\to U$ gives
\[\chi(U)=d\cdot \chi(U)-N.\]
Thus
\[\chi(U)=\frac{N}{d-1}\geq 0,\]
which leads to ${\rm card} (E)\leq 2$. \end{proof}

A union of completely invariant sets is still completely invariant, therefore 
the following definition makes sense.
\begin{definition}
The exceptional set $E_f$ is the largest finite completely invariant set.
\end{definition}

\begin{remark} The grand-orbit of a point is a totally invariant set. If it
is a finite set, then, by definition of the exceptional set $E_f$, we
have $z\in E_f$.
\end{remark}

Observe that when two rational maps $f:\widehat{\mathbb{C}}\to \widehat{\mathbb{C}}$ and $g:\widehat{\mathbb{C}}\to \widehat{\mathbb{C}}$ are
conjugate by a homeomorphism $\phi$, then the homeomorphism $\phi$ sends
the forward (resp. backward) orbit of a point $z\in \widehat{\mathbb{C}}$ under the action of
$f$
to the forward (resp. backward) orbit of $\phi(z)$ under the action of $g$.
Hence, $f$-invariant sets are mapped by $\phi$ to $g$-invariant
sets. Thus, the largest finite completely $f$-invariant set is mapped by
$\phi$ to the largest finite completely $g$-invariant set. In other words,
when it is not empty, the exceptional set $E_f$ is
mapped by $\phi$ to the exceptional set $E_g$.

The exceptional set $E_f$ is a minor irritant, which will
come back to annoy us on several occasions. The following
description should remove any doubts about its lack of
significance.

\begin{proposition}\label{exceptionalset}
Up to conjugacy, the exceptional set $E_f$ is not empty in precisely two cases:
\begin{itemize}
\item when $f(z)=z^d$ with $|d| \ge 2$, $E_f=\{0,\infty\}$ and

\item when $f$ is a polynomial but is not conjugate to $z\mapsto z^d$, $E_f=\{\infty\}$.
\end{itemize}
\end{proposition}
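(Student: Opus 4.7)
The strategy is to classify, up to M\"obius conjugacy, the rational maps $f$ of degree $d\ge 2$ with $E_f\ne\emptyset$. Proposition~\ref{totinvfinite} forces $|E_f|\le 2$, so only the two cases $|E_f|=1$ and $|E_f|=2$ must be treated. In each case, complete invariance of $E_f$ restricts the preimages of its points to itself; after sending those points to $0$ and/or $\infty$ by a M\"obius change of coordinate, I read off the zero/pole structure of the conjugated map and normalise by a scaling $z\mapsto\alpha z$.

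In the case $E_f=\{a\}$, conjugate so that $a=\infty$. The conjugated map $g$ satisfies $g^{-1}(\infty)=\{\infty\}$, so $g$ has no finite pole and is therefore a polynomial. If $g$ were conjugate to $z\mapsto z^d$, then $E_g$ would be M\"obius-conjugate to $\{0,\infty\}$, hence of cardinality $2$, contradicting $|E_g|=|E_f|=1$. This yields the second listed case.

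In the case $|E_f|=2$, the extra ingredient is the Riemann--Hurwitz formula (Theorem~\ref{theo:RH}). Setting $U:=\widehat{\mathbb{C}}\setminus E_f$, one has $\chi(U)=0$, so $N=0$: every one of the $2d-2$ critical points of $f$ lies in $E_f$. The permutation induced by $f$ on $E_f=\{a,b\}$ is either the identity or the transposition. After moving $\{a,b\}$ to $\{0,\infty\}$, in the fixed case the conjugated $g$ has its only pole at $\infty$ and only zero at $0$, forcing $g(z)=cz^d$; a further scaling $z\mapsto\alpha z$ with $c\alpha^{d-1}=1$ normalises this to $z\mapsto z^d$. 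In the swap case $g$ has its only pole at $0$ and only zero at $\infty$, and $\deg g=d$ then forces $g(z)=c/z^d$, scaling-normalised to $z\mapsto z^{-d}$. Conversely, $z^{\pm d}$ plainly has $\{0,\infty\}$ completely invariant, and every polynomial has $\infty$ as a completely invariant fixed point, so $E_f$ is nonempty in exactly the stated cases. The main subtlety is the Riemann--Hurwitz step that confines the critical points to $E_f$ in the two-point case; once that is in hand, the rest is bookkeeping with zeros and poles.
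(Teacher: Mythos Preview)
Your argument is correct and follows the same overall case split as the paper: $|E_f|=1$ gives a polynomial after sending the point to $\infty$, and $|E_f|=2$ splits into the fixed and swapped subcases, each normalised to $z\mapsto z^{\pm d}$ after moving the two points to $0$ and $\infty$ and scaling.

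The one notable difference is your Riemann--Hurwitz detour in the two-point case. The paper does not invoke it: once $\{0,\infty\}$ is completely invariant and you know the permutation, the zero/pole structure follows directly. For instance, in the fixed case $g(0)=0$ and $g(\infty)=\infty$ together with $g^{-1}(\{0,\infty\})=\{0,\infty\}$ force $g^{-1}(0)=\{0\}$ and $g^{-1}(\infty)=\{\infty\}$, so $g(z)=cz^d$ immediately. Your Riemann--Hurwitz step (all $2d-2$ critical points lie in $E_f$, hence each point has local degree $d$) reaches the same conclusion by a slightly longer route; it is valid but not needed, so your remark that it is ``the main subtlety'' overstates its role. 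On the other hand, you are more explicit than the paper about the exclusion ``not conjugate to $z\mapsto z^d$'' in the one-point case and about the converse direction, both of which the paper leaves implicit.
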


\begin{proof} It is clear from the construction that $E_f$ is completely
invariant; so if it has only one point, this point must be fixed,
and if it consists of two points, these must be fixed or exchanged by
$f$. We will examine these three cases one at a time.

If $E_f$ has just one point $a$, move it to $\infty$, for example by
making the
change of coordinates $w=1/(z-a)$. Then $f = P/Q$ is
a rational map such that infinity is its only inverse image. Hence,
the polynomial $Q$ has no roots, it is constant,  and $f$
is a polynomial.

If $E_f$ has two points $a$ and $b$, both fixed, then we can put one at
$\infty$ and the other at $0$ (make the change of coordinates
$w=(z-b)/(z-a)$). We see that $f$ is a polynomial and that $0$
is the only root of the polynomial equation $f(w)=0$, so $f(w) =
\lambda w^d$ for some $\lambda$. Choose a number $\nu$ such that
$\nu^{d-1}=\lambda$, and make the change of coordinates $z=\nu w$. Then,
it is easy to see that the expression of $f$ becomes $z \mapsto z^d$.

If $E_f$ has two points exchanged by $f$, put one
at $\infty$ and the other at $0$, and write $f=P/Q$. Since
$\infty$ is the only inverse image of $0$, we see that $P$ has no
roots, so it is a constant, which we may take to be $1$. Since $0$
is the only inverse image of $\infty$, we see that $Q(w) = \lambda w^d$ for
some $\lambda \neq 0$, so that $f(w)= 1/(\lambda w^d)$. We can do
a change of coordinate $z=\nu w$, where $\nu^{d+1}=\lambda$. Again it is easy
to check that the expression of $f$ becomes $z \mapsto z^{-d}$.
\end{proof}

\begin{proposition}\label{exceptsetinFatou}
The exceptional set $E_f$ is always contained in the Fatou
set $\mathcal{F}_f$.
\end{proposition}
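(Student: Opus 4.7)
The plan is to show that every point of $E_f$ is in fact a superattracting periodic point of $f$, so that the inclusion $E_f \subseteq \mathcal{F}_f$ follows directly from Proposition~\ref{wherefixedpoints}.

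First, I would observe that every point of $E_f$ is periodic. Indeed, complete invariance gives $f(E_f) = E_f$, so $f$ restricts to a surjection $E_f \to E_f$ of a finite set, hence to a bijection. Thus every $x \in E_f$ lies on a cycle $\langle x_1, x_2, \ldots, x_p \rangle$ with $f(x_i) = x_{i+1}$ (indices taken mod $p$).

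Next, I would argue that each $x_i$ is a critical point of $f$. By complete invariance, $f^{-1}(x_{i+1}) \subseteq E_f$, and since $f|_{E_f}$ is a bijection, the only element of $E_f$ whose image is $x_{i+1}$ is $x_i$. Therefore $f^{-1}(x_{i+1}) = \{x_i\}$ as a set. Counted with multiplicity the fibre must contain $d = \deg f \geq 2$ points, so $f$ has local degree $d \geq 2$ at $x_i$, which means $x_i$ is a critical point. Consequently, the multiplier of the cycle, namely
\[
\lambda = \prod_{i=1}^p {\rm D}_{x_i} f,
\]
is zero (each factor vanishes, interpreted in local charts when some $x_i = \infty$), so the cycle is superattracting. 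Proposition~\ref{wherefixedpoints} then yields $x_i \in \mathcal{F}_f$, and we conclude $E_f \subseteq \mathcal{F}_f$.

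The main subtle point is the step where I identify the full set-theoretic preimage of $x_{i+1}$ with $\{x_i\}$; everything else is a routine consequence. An alternative route, which I would keep in reserve, is to invoke Proposition~\ref{exceptionalset} directly: up to conjugacy, either $f(z) = z^d$ with $E_f = \{0, \infty\}$ and both points are visibly superattracting fixed points, or $f$ is a polynomial with $E_f = \{\infty\}$, in which case $\infty$ is a superattracting fixed point of any polynomial of degree at least $2$; combined with the conjugacy-invariance of Fatou sets (Proposition~\ref{juliaconjhomeo}), this again reduces the claim to Proposition~\ref{wherefixedpoints}. I prefer the first, more intrinsic argument because it avoids the case analysis and explains \emph{why} exceptional points must be superattracting.
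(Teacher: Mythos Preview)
Your proposal is correct. Both your primary argument and your alternative route are valid; interestingly, the paper takes the alternative route you keep in reserve: it simply invokes the classification of Proposition~\ref{exceptionalset} and observes that in each of the three cases $E_f$ consists of superattracting cycles, then applies Proposition~\ref{wherefixedpoints}.

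Your preferred intrinsic argument is genuinely different and in some ways cleaner. By using only complete invariance and finiteness of $E_f$, you show directly that every exceptional point has a single set-theoretic preimage, hence local degree $d\ge 2$, hence is a superattracting periodic point. This avoids both the case analysis and the conjugacy-invariance of Fatou sets, and it explains conceptually why exceptional points must be superattracting rather than verifying it post hoc. The paper's approach, on the other hand, is shorter given that Proposition~\ref{exceptionalset} has already been proved, and it extracts more information (the explicit normal forms) along the way. Either is perfectly acceptable here.
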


\begin{proof}
In all three cases, $E_f$ is a union of
superattracting cycles. Proposition \ref{wherefixedpoints} shows that
such cycles are always contained in
the Fatou set $\mathcal{F}_f$.
\end{proof}

We can now give a new characterization of the Julia set.

\begin{proposition}\label{juliaininvariant}
The Julia set of a rational map is the smallest completely invariant closed set
containing at least three points.
\end{proposition}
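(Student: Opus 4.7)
The plan is to prove two things: first, that $\mathcal{J}_f$ is itself a completely invariant closed set with at least three points, and second, that any other such set contains it.

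For the first part, complete invariance is exactly Proposition \ref{invariance}, and closedness is immediate since $\mathcal{F}_f$ is open. To see that $\mathcal{J}_f$ contains at least three points, I would argue by contradiction: if $\mathcal{J}_f$ had at most two points, then (being non-empty, closed and completely invariant) it would be a non-empty finite completely invariant set, hence by maximality contained in $E_f$. But Proposition \ref{exceptsetinFatou} gives $E_f\subseteq \mathcal{F}_f$, contradicting the fact that $\mathcal{J}_f\cap \mathcal{F}_f=\emptyset$ and $\mathcal{J}_f\neq \emptyset$.

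For the minimality, let $E$ be any completely invariant closed subset of $\widehat{\mathbb{C}}$ with $\mathrm{card}(E)\geq 3$, and set $U:=\widehat{\mathbb{C}}\setminus E$. Then $U$ is open and, since $E$ is completely invariant, so is $U$; in particular $f^{\circ n}(U)\subseteq U$ for every $n\geq 0$. Thus the family of iterates $(f^{\circ n}:U\to \widehat{\mathbb{C}})_{n\in\mathbb{N}}$ omits all three (or more) points of $E$. By Montel's theorem, this family is normal on $U$, so $U\subseteq \mathcal{F}_f$, which is exactly $\mathcal{J}_f\subseteq E$.

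The main obstacle is really just the bookkeeping in the first step: one needs Proposition \ref{exceptsetinFatou} and the non-emptiness of $\mathcal{J}_f$ to rule out the degenerate possibility that $\mathcal{J}_f$ contains only one or two points. Once that is settled, the minimality argument is a clean one-line application of Montel's theorem to the complement of $E$, exploiting that complete invariance of $E$ is precisely what is needed to guarantee that the iterates stay inside $U$ and hence miss the three points of $E$.
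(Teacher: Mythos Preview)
Your proof is correct and follows essentially the same route as the paper: invariance and closedness of $\mathcal{J}_f$ come from Proposition~\ref{invariance}, the three-point lower bound is obtained by noting that a Julia set with at most two points would lie in $E_f\subset\mathcal{F}_f$, and minimality is exactly the Montel argument on the complement of $E$. The only difference is that you spell out the contradiction for the three-point step in slightly more detail than the paper does.
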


\begin{proof}
The Julia set is a completely invariant closed set. It contains at least three
points, since otherwise it would be
contained in the exceptional set $E_f\subset \mathcal{F}_f$.
Furthermore, if $E$ is a completely invariant closed set containing at
least $3$ points, the complement $\Omega=\widehat{\mathbb{C}}\setminus E$ of $E$ is a completely
invariant open set omitting more than three points. Hence, by Montel's
theorem, the family of iterates $f^{\circ n}$ is normal on $\Omega$.
Thus $\Omega\subset \mathcal{F}_f$ and $\mathcal{J}_f\subset E$.
\end{proof}

The following proposition asserts that if $z$ does not belong to the
exceptional set $E_f$, then the closure of the backward orbit of $z$,
\[{\cal O}^-(z):=\bigcup_{m\geq 0}f^{-m}\{z\},\]
contains the Julia set $\mathcal{J}_f$.

\begin{proposition}\label{invimdense}
For any $z \notin E_f$, we have
\[\mathcal{J}_f \subseteq \overline{{\cal O}^-(z)}.\]
\end{proposition}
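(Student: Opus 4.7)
The plan is to fix an arbitrary point $w \in \mathcal{J}_f$ and an arbitrary open neighborhood $V$ of $w$, and to show that $V$ meets $\mathcal{O}^-(z)$. Equivalently, it suffices to prove that $z \in f^{\circ n}(V)$ for some $n \geq 0$. So I would study the union
\[
W := \bigcup_{n\geq 0} f^{\circ n}(V), \qquad E := \widehat{\mathbb{C}} \setminus W,
\]
and show that $z$ cannot lie in $E$.

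The first step is to bound the size of $E$. Since $w \in \mathcal{J}_f$, the family $(f^{\circ n}|_V)_{n \in \mathbb{N}}$ is not normal; by Montel's theorem it cannot omit three or more values of $\widehat{\mathbb{C}}$. Since every point of $E$ is omitted by every $f^{\circ n}|_V$, this immediately gives $\#E \leq 2$.

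The key step is to promote $E$ to a completely invariant set. Backward invariance $f^{-1}(E) \subseteq E$ is direct: if $b \in f^{-1}(E)$ with $f(b) = a \in E$, and if $b$ did lie in $W$, say $b \in f^{\circ n}(V)$, then $a = f(b) \in f^{\circ(n+1)}(V) \subseteq W$, contradicting $a \in E$. For forward invariance, I would use the surjectivity of $f$ together with the finiteness of $E$: applying $f$ to $f^{-1}(E) \subseteq E$ gives $E = f(f^{-1}(E)) \subseteq f(E)$, while obviously $\#f(E) \leq \#E$; since $E$ is finite these force $f(E) = E$, and then $f^{-1}(E) = E$ follows by a symmetric argument. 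This is the step I expect to require the most care, since backward invariance alone is not a priori enough.

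Once $E$ is established to be finite and completely invariant, the definition of the exceptional set yields $E \subseteq E_f$. Since $z \notin E_f$ by hypothesis, we conclude $z \notin E$, so $z \in W$ and hence $z = f^{\circ n}(\zeta)$ for some $n \geq 0$ and some $\zeta \in V$. Such a $\zeta$ lies in $V \cap f^{-n}(z) \subseteq V \cap \mathcal{O}^-(z)$, completing the argument.
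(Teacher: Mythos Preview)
Your proof is correct and follows essentially the same route as the paper: form the forward saturation $W=\bigcup_{n\ge 0}f^{\circ n}(V)$ of a neighborhood of a Julia point, use Montel to see that its complement $E$ has at most two points, upgrade the (obvious) backward invariance of $E$ to complete invariance via its finiteness, and conclude $E\subseteq E_f$. The paper compresses your ``key step'' into a single sentence (``since $E$ is finite, it is permuted by $f$, so it is forward invariant''), but the content is the same counting argument you spell out.
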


\begin{proof} Take a point $z_0\in \mathcal{J}_f$ and let $U$ be an arbitrary
neighborhood of $z_0$. We must show that the set
$\Omega=\bigcup_{n\in \mathbb{N}} f^{\circ n}(U)$
contains $z$. We will show that its complement $E=\widehat{\mathbb{C}}\setminus \Omega$
is contained in the exceptional set $E_f$.
Indeed, $\Omega$ is a forward invariant open
set which omits at most two points, since otherwise the family
$f^{\circ n}$ would be normal on $U$.
Thus, $E=\widehat{\mathbb{C}}\setminus \Omega$ is a backward invariant set. Since $E$ is
finite, it is permuted by $f$, so it is forward invariant. Hence,
by definition of the exceptional set, it is contained in $E_f$.
\end{proof}

\begin{proposition}\label{perfect2}
The Julia set $\mathcal{J}_f$ is perfect. 
\end{proposition}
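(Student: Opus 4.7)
Since $\mathcal{J}_f$ is already known to be closed, the statement reduces to showing that it has no isolated points. My plan is to introduce the set $J'$ of non-isolated points of $\mathcal{J}_f$ (that is, the accumulation points of $\mathcal{J}_f$, which lie in $\mathcal{J}_f$ by closedness), verify that $J'$ is itself closed and completely invariant, and then invoke the minimality property from Proposition~\ref{juliaininvariant} to conclude $J' = \mathcal{J}_f$.

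The first step, closedness of $J'$, is immediate: the isolated points of $\mathcal{J}_f$ form a relatively open subset, so $J'$ is closed in $\mathcal{J}_f$ and hence in $\widehat{\mathbb{C}}$. For complete invariance I would argue as follows. For forward invariance, take $z \in J'$ and a sequence $z_n \in \mathcal{J}_f \setminus \{z\}$ with $z_n \to z$. Then $f(z_n) \in \mathcal{J}_f$ and $f(z_n) \to f(z)$; if $f(z_n) = f(z)$ held for infinitely many $n$, the corresponding $z_n$ would lie in the finite set $f^{-1}(f(z))$ and yet converge to $z$ without being equal to it, which is impossible. Hence $f(z)$ is approached by points of $\mathcal{J}_f \setminus \{f(z)\}$. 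For backward invariance, take $z' \in f^{-1}(J')$ and any neighbourhood $V$ of $z'$; since $f$ is a non-constant holomorphic map it is open, so $f(V)$ is a neighbourhood of $f(z') \in J'$ and meets $\mathcal{J}_f \setminus \{f(z')\}$; any $v \in V$ with $f(v)$ in this set lies in $\mathcal{J}_f$ by complete invariance of $\mathcal{J}_f$, and $f(v) \neq f(z')$ forces $v \neq z'$, exhibiting $z'$ as an accumulation point.

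To finish, I would note that $\mathcal{J}_f$ is infinite: it contains at least three points by Proposition~\ref{juliaininvariant}, while a finite completely invariant set contains at most two by Proposition~\ref{totinvfinite}. Applying Proposition~\ref{juliaininvariant} to the closed completely invariant set $J'$, either $J' = \mathcal{J}_f$ and we are done, or $|J'| \leq 2$; the latter would give $J' \subseteq E_f \subseteq \mathcal{F}_f$ by Proposition~\ref{exceptsetinFatou}, so $J' \subseteq \mathcal{J}_f \cap \mathcal{F}_f = \emptyset$, making $\mathcal{J}_f$ discrete and (by compactness) finite, contradicting its infinitude.

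The only potentially delicate point is the backward invariance of $J'$ at a critical preimage; but by using the openness of non-constant holomorphic maps instead of a local normal form, the lifting argument requires no case distinction between critical and non-critical preimages.
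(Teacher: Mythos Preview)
Your proof is correct, but it follows a different route from the paper's. The paper argues in three lines: since $\mathcal{J}_f$ is compact and infinite it contains at least one non-isolated point $z_0$; then Proposition~\ref{invimdense} says $\overline{\mathcal{O}^-(z_0)}\supseteq \mathcal{J}_f$, and since preimages of a non-isolated point are again non-isolated (essentially your openness argument), the closure of $\mathcal{O}^-(z_0)$ already consists of accumulation points, so every point of $\mathcal{J}_f$ is non-isolated.

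By contrast, you never invoke density of backward orbits. You instead package the accumulation points as a closed set $J'$, verify complete invariance directly, and appeal to the minimality characterization of Proposition~\ref{juliaininvariant} together with the cardinality dichotomy for finite completely invariant sets. This is a legitimate and slightly more structural argument: it trades the analytic input of Proposition~\ref{invimdense} for the softer Proposition~\ref{juliaininvariant}, at the cost of having to check both invariance directions and handle the degenerate case $|J'|\le 2$ separately. One small point you glide over: you prove the inclusions $f(J')\subseteq J'$ and $f^{-1}(J')\subseteq J'$, whereas ``completely invariant'' in this paper means equalities; surjectivity of $f$ on $\widehat{\mathbb{C}}$ upgrades the inclusions to equalities, and it would be worth saying so explicitly before citing Proposition~\ref{juliaininvariant}.
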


\begin{proof} Since the Julias set is closed, it suffices to prove that it has no isolated points. The Julia set $\mathcal{J}_f$ is not finite. Thus, it contains a
non-isolated point $z_0$. Since the backward orbit of $z_0$ is
dense in $\mathcal{J}_f$, any point in $\mathcal{J}_f$ is non-isolated. This proves
that $\mathcal{J}_f$ is perfect, hence uncountable.
\end{proof}

\subsection{Julia sets and periodic points}
\label{subsec:5.2}

Fatou and Julia both proved that the Julia set is the closure of the
set of repelling periodic points. This ``dynamical'' definition of the Julia
set is probably more natural to specialists of dynamical systems, and
relates holomorphic dynamics to ``Axiom A attractors'' and hyperbolic
dynamics.
Fatou's and Julia's proofs are different but both involve results
which will be proved later. Fatou's proof is based on the fact that
there are only
finitely many non-repelling cycles. 
Julia's proof is based on the existence of a repelling cycle.

In these notes, we will only prove the following weaker result which follows directly from Montel's
Theorem. This result is the first step in Fatou's proof.
The density of repelling cycles in the Julia set then follows from
the finiteness of non-repelling cycles.

\begin{proposition}\label{perptdense} The closure of the set of
periodic points contains the Julia set.
\end{proposition}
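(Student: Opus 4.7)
The plan is to show that every point $z_0 \in \mathcal{J}_f$ is a limit of periodic points: given any neighborhood $V$ of $z_0$, I will exhibit a periodic point of $f$ in $V$. The idea is a cross-ratio construction that turns the assumption ``no periodic point in a small $U \subset V$'' into ``an auxiliary holomorphic family on $U$ omits three values of $\widehat{\mathbb{C}}$'', so that Montel's theorem produces a contradiction with $\mathcal{J}_f \cap U \neq \emptyset$.

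First I would set up a favourable small neighborhood $U \subset V$. Since $\mathcal{J}_f$ is perfect (Proposition \ref{perfect2}) and both the set of critical values of $f$ and the set of fixed points of $f$ are finite, I can pick $z_1 \in V \cap \mathcal{J}_f$ that is neither a critical value nor a fixed point. I then choose a simply connected neighborhood $U \subset V$ of $z_1$ disjoint from all critical values and from all fixed points of $f$. Because $\deg f \geq 2$ and $U$ avoids critical values, $f^{-1}$ admits on $U$ two disjoint single-valued holomorphic branches $g_1, g_2 \colon U \to \widehat{\mathbb{C}}$ with $g_1(U) \cap g_2(U) = \emptyset$; because $U$ avoids fixed points, we also have $g_i(z) \neq z$ on $U$ for $i=1,2$.

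Next, for $n \geq 1$ I set
$$\phi_n(z) := \frac{f^{\circ n}(z) - g_1(z)}{f^{\circ n}(z) - g_2(z)} \cdot \frac{z - g_2(z)}{z - g_1(z)}.$$
All denominators are nonzero by the choices above. If $\phi_n(z) \in \{0, \infty\}$ at some $z \in U$, then $f^{\circ n}(z) = g_i(z)$ for some $i$, whence $f^{\circ (n+1)}(z) = f(g_i(z)) = z$, so $z$ is periodic. Clearing denominators in $\phi_n(z) = 1$ simplifies to $(g_1(z) - g_2(z))(f^{\circ n}(z) - z) = 0$, which forces $f^{\circ n}(z) = z$, again periodic. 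Hence, under the assumption that $U$ contains no periodic point of $f$, the family $(\phi_n)_{n \geq 1}$ omits $\{0, 1, \infty\}$ on $U$ and is normal by Montel's theorem. Since $\phi_n$ is related to $f^{\circ n}$ by a M\"obius transformation in $f^{\circ n}(z)$ whose coefficients are holomorphic in $z$ and bounded on compact subsets of $U$, normality of $(\phi_n)$ transfers to normality of $(f^{\circ n})$ on $U$, contradicting $z_1 \in \mathcal{J}_f$.

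The main obstacle is locating the right cross-ratio: one has to spot the particular combination $\phi_n$ whose three exceptional values $0$, $1$, $\infty$ each encode a distinct periodicity relation at $z$. The computation for $\phi_n = 1$, which after cross-multiplication collapses to $(g_1 - g_2)(f^{\circ n} - \mathrm{id}) = 0$, is the subtlest step; the final transfer of normality from $(\phi_n)$ to $(f^{\circ n})$, and the initial genericity choice of $z_1$, are then routine given Montel's theorem and the basic set-up of the paper.
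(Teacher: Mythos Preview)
Your proof is correct and follows the same cross-ratio/Montel strategy as the paper. The one difference is in how the three distinct reference points for the cross ratio are obtained: the paper takes three branches $g_1,g_2,g_3$ of $f^{-2}$ (passing to the second iterate precisely so that even in degree~$2$ there are at least three branches available), whereas you take only two branches of $f^{-1}$ together with the identity, arranging in addition that $U$ avoids the fixed points of $f$ so that $z\neq g_i(z)$. Your variant is slightly more economical (no need to invoke $f^{\circ 2}$) at the price of one extra, harmless genericity condition on $z_1$; both arguments then finish identically by inverting the M\"obius relation to transfer normality from the auxiliary family back to $(f^{\circ n})$. One cosmetic point: your sentence ``all denominators are nonzero by the choices above'' is not quite right for the factor $f^{\circ n}(z)-g_2(z)$, but this is irrelevant since $\phi_n$ is well-defined as a map to $\widehat{\mathbb{C}}$ (the three reference points $g_1(z),g_2(z),z$ being distinct rules out any $0/0$ indeterminacy), and you correctly interpret $\phi_n(z)=\infty$ as the event $f^{\circ n}(z)=g_2(z)$.
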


\begin{proof} Let $z_0 \in \mathcal{J}_f$ be arbitrary. Since $\mathcal{J}_f$ is perfect, we may assume
that $z_0$ is not a critical
value of $f^{\circ 2}$, so there exist a neighborhood $U$ of $z_0$ and three
branches $g_1,g_2,g_3$ of $f^{-2}$ with mutually disjoint images.
We are considering the second iterate because if $d=2$, there would
only be two branches of $f^{-1}$.

Now consider the family of maps
\[ h_m(z):= \frac {f^{\circ m}(z)-g_1(z)}{f^{\circ m}(z)-g_2(z)}\left/
\frac{g_3(z)-g_1(z)}{g_3(z)-g_2(z)}\right..
\]
If $f^{\circ m}(z) \ne g_i(z), i=1,2,3$ for all $z$ in some neighborhood
$U$ of $z$, then the maps $h_m(z)$ are never $0,1$ or
$\infty$ in $U$, hence they form a normal family.  But then the family
$f^{\circ m}$ is also normal, since we can express $f^{\circ m}$ in terms of
$h_m$:
\[
f^{\circ m} = \frac {g_1(g_3-g_2)-g_2(g_3-g_1)h_m}{(g_3-g_2)- (g_3-g_1)h_m}.
\]
This is a contradiction, so in every neighborhood of $z$ there are
roots of $f^{\circ m}(z) = g_i(z)$ for some $i$ and some $m$, i.e.,
roots of $f^{\circ(m+2)}(z) =z$. \end{proof}

\subsection{Connectivity of Julia sets of polynomials}
\label{subsec:5.3}

Here is a first relation between the behaviour of critical orbits and the topology of the Julia set. 

\begin{theorem}[Fatou]
The filled-in Julia set $\mathcal{K}_f$ of a polynomial $f$  is connected if and only if all critical points of
$f$ belong to $\mathcal{K}_f$.\label{filledjulconnected}
\end{theorem}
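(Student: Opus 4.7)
The plan is to transfer the statement about $\mathcal{K}_f$ into a topological statement about its complement $\Omega_\infty = \widehat{\mathbb{C}}\setminus\mathcal{K}_f$, the basin of infinity. Since $\infty$ is the unique preimage of itself under $f$, the immediate basin of $\infty$ is totally invariant and hence coincides with the full basin $\Omega_\infty$; in particular $\Omega_\infty$ is connected. By the classical point-set fact that a connected open subset $U\subset\widehat{\mathbb{C}}$ is simply connected if and only if $\widehat{\mathbb{C}}\setminus U$ is connected, the statement to prove becomes: $\Omega_\infty$ is simply connected if and only if every critical point of $f$ other than $\infty$ lies in $\mathcal{K}_f$.

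The main tool is the exhaustion $V_0\subset V_1\subset\cdots$ defined by $V_0 := \{z\in\widehat{\mathbb{C}} : |z|>R\}$ with $R$ chosen large enough that $\mathcal{K}_f\subset\{|z|\le R\}$ and $|f(z)|>2|z|$ for $|z|\ge R$, and $V_n := f^{-n}(V_0)$. Then $V_0\subset V_1\subset\cdots$, $\Omega_\infty = \bigcup_{n\ge 0} V_n$, and $f:V_{n+1}\to V_n$ is a proper holomorphic branched cover of degree $d$. I first claim by induction on $n$ that each $V_n$ is connected: the base case is clear, and if $V_n$ is connected, then any hypothetical component $C$ of $V_{n+1}$ not containing $\infty$ would map properly and surjectively onto $V_n$ (by openness and properness of $f$), so would contain a preimage of $\infty\in V_n$; but the only preimage of $\infty$ is $\infty$ itself, which lies in a different component, a contradiction.

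For the ``if'' direction, assume every finite critical point of $f$ lies in $\mathcal{K}_f$. Then the only critical point of $f$ in $V_{n+1}\subset\Omega_\infty$ is $\infty$, of multiplicity $d-1$, and Theorem~\ref{theo:RH} applied to $f:V_{n+1}\to V_n$ yields $\chi(V_{n+1}) = d\cdot\chi(V_n) - (d-1)$. Since $\chi(V_0)=1$, induction gives $\chi(V_n)=1$ for every $n$; combined with connectedness this means each $V_n$ is a topological disk, so the nested union $\Omega_\infty$ is simply connected and $\mathcal{K}_f$ is connected.

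For the ``only if'' direction, assume $\mathcal{K}_f$ is connected, so $\Omega_\infty$ is simply connected with $\chi(\Omega_\infty)=1$. Applying Theorem~\ref{theo:RH} directly to the degree-$d$ map $f:\Omega_\infty\to\Omega_\infty$ yields $1 = d\cdot 1 - N$, hence $N = d-1$; since $\infty$ alone already contributes multiplicity $d-1$, there is no room for any further critical point in $\Omega_\infty$, and every finite critical point must lie in $\mathcal{K}_f$. The principal delicacy is the inductive connectedness of the $V_n$ in the ``if'' direction: without it, $\chi(V_n)=1$ would not suffice to conclude $V_n$ is a topological disk (several components could have Euler characteristics summing to $1$); the preimage-of-$\infty$ argument is what closes this gap.
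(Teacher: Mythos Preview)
Your proof is correct and follows essentially the same approach as the paper: both exhaust $\Omega_\infty$ by $V_n=f^{-n}(V_0)$ and apply Riemann--Hurwitz inductively for the ``if'' direction, and both apply Riemann--Hurwitz directly to $f:\Omega_\infty\to\Omega_\infty$ for the ``only if'' direction. Your explicit inductive argument that each $V_n$ is connected (via the unique preimage of $\infty$) is a point the paper glosses over, so your write-up is in fact slightly more careful.
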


\begin{remark}
Note that since the Julia set $\mathcal{J}_f$ is the boundary of $\mathcal{K}_f$,
$\mathcal{J}_f$ is connected if and
only if $\mathcal{K}_f$ is connected. 
\end{remark}

\begin{proof} Since $f$ is a polynomial of degree $d\ge 2$, we have that $f(z)/z$ tends to infinity as $z$ tends to infinity, hence there exists $R>0$ so that $\bigl|f(z)\bigr|\geq 2|z|$ for $|z|>R$.
Set $U_0:=\widehat{\mathbb{C}}\setminus \overline {D}_R$ and define
inductively $U_{n+1}:= f^{-1}(U_n)$. By the Riemann-Hurwitz
formula, we have
\[
\chi(U_{n+1})=d\chi(U_n)-(d-1),
\]
where the number $d-1$ is the multiplicity of  the critical point at
$\infty$. Since $U_0$ is simply connected, we have $\chi(U_0)=1$,
and by induction, we see that when all the critical points of $f$
belong to $\mathcal{K}_f$, then $\chi(U_n)=1$ for all $n\geq 0$. It follows
that
\[\Omega_\infty:= \bigcup_{n\geq 0}U_n\]
is simply connected and its complement $\mathcal{K}_f$ is connected.

Conversely, let $\Omega_\infty\subset {\widehat{\mathbb{C}}}$
be the connected component of the Fatou set $\mathcal{F}_f$ which
contains $\infty$. Since the only
inverse image of $\infty$ is $\infty$ itself,
$f^{-1}(\Omega_{\infty})=\Omega_\infty$. Applying the Riemann-Hurwitz
formula to $f:\Omega_\infty \to \Omega_\infty$, we get
\[\chi(\Omega_\infty) = d \chi(\Omega_\infty) - (d-1) - k,\]
where the number $d-1$ is contributed by the critical point at
$\infty$, and the number $k$ is the number of other critical points
in $\Omega_\infty$ (i.e., not in $\mathcal{K}_f$) counted with multiplicity.
If $\chi(\Omega_\infty)$ is finite, then $\chi(\Omega_\infty) =
1+k/(d-1)$.  But the Euler characteristic of a connected plane
domain is at most 1.  So $\chi(\Omega_\infty)=1$ if no critical
point is attracted to $\infty$; otherwise $\chi(\Omega_\infty)$ is
infinite.
\end{proof}

%\begin{figure}[htb]
%\vskip.5cm
%\centerline{
%\begin{picture}(0,0)%
%\includegraphics[height=5cm]{NotCantor.png}%
%\end{picture}%
%\setlength{\unitlength}{1973sp}%
%\begin{picture}(7677,4056)(1969,-6345)
%%\put(2251,-661){$-2.5+1.5i$}
%%\put(8551,-661){$1.5+1.5i$}
%%\put(8551,-6136){$1.5-1.5i$}
%%\put(2206,-6136){$-2.5-1.5i$}
%\put(4311,-3436){$\omega_2$}
%\put(7100,-4081){$\omega_1$}
%%\put(5486,-4471){$\mathcal{K}_f$}
%\end{picture}}
%\caption{The filled-in Julia set of a cubic polynomial. The critical point $\omega_1$ is
%in $\mathcal{K}_f$. The other critical point $\omega_2$ is in the basin of infinity. 
%The Julia set $\mathcal{J}_f$ is not connected.\label{intro5}}
%\end{figure}

\section{Description of the Fatou set}
\label{sec:6}
We now return to the case where $f$ is a rational map, not
necessarily a polynomial. Our goal is to give a classification of periodic Fatou components. 

\begin{definition}
A map $f:U\to V$ is a {\em proper map} if for any compact set $K\subseteq V$, 
the preimage $f^{-1}(K)$ is compact. 
\end{definition}

\begin{proposition}\label{stableproper}
If $U$ is a connected component of $\mathcal{F}_f$, then $f(U)$ is also a
connected component of $\mathcal{F}_f$ and $f:U\to f(U)$ is a proper
map.
\end{proposition}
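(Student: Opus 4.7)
The plan has two parts: first, showing $f(U)$ is a whole Fatou component, then showing properness. Both rely on the same topological fact, which I expect to be the crux: any connected component $U$ of the open set $\mathcal{F}_f$ has topological boundary $\partial U$ contained in $\mathcal{J}_f$ (because $\mathcal{F}_f$ is open and $U$ is a maximal connected subset, so every boundary point fails to be Fatou).

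First I would handle the image. Since $f$ is continuous and $U$ is connected, $f(U)$ is connected; and $f(U) \subseteq f(\mathcal{F}_f) = \mathcal{F}_f$ by Proposition \ref{invariance}. Hence $f(U)$ sits inside a unique connected component $V$ of $\mathcal{F}_f$. To upgrade this to $f(U) = V$, I would argue that $f(U)$ is both open and closed in $V$. Openness is immediate: $f$ is a non-constant holomorphic map, hence open, so $f(U)$ is open in $\widehat{\mathbb{C}}$ and a fortiori in $V$. For closedness in $V$, let $y_n = f(x_n) \to y \in V$ with $x_n \in U$. By compactness of $\widehat{\mathbb{C}}$, extract $x_{n_k} \to x \in \overline{U}$; by continuity $f(x) = y \in V \subseteq \mathcal{F}_f$, and since $f^{-1}(\mathcal{F}_f) = \mathcal{F}_f$, we get $x \in \mathcal{F}_f$. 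If $x$ lay in $\partial U$ then $x \in \mathcal{J}_f$, contradicting $x \in \mathcal{F}_f$; so $x \in U$, and $y = f(x) \in f(U)$. Thus $f(U)$ is a nonempty clopen subset of the connected set $V$, hence $f(U) = V$.

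Next, properness of $f: U \to f(U)$. Let $K \subseteq f(U)$ be compact. Since $\widehat{\mathbb{C}}$ is compact and $f: \widehat{\mathbb{C}} \to \widehat{\mathbb{C}}$ is continuous, $f^{-1}(K)$ is compact in $\widehat{\mathbb{C}}$. I then need to show $f^{-1}(K) \cap U$ is closed in $\widehat{\mathbb{C}}$, since it already lies in the compact set $f^{-1}(K)$. The same boundary-in-Julia argument works: if $x_n \in f^{-1}(K) \cap U$ converges to $x \in \widehat{\mathbb{C}}$, then $x \in f^{-1}(K)$ (closed) and $x \in \overline{U}$. Moreover $f(x) \in K \subseteq f(U) \subseteq \mathcal{F}_f$, so $x \in f^{-1}(\mathcal{F}_f) = \mathcal{F}_f$, ruling out $x \in \partial U \subseteq \mathcal{J}_f$; hence $x \in U$. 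Therefore $f^{-1}(K) \cap U$ is closed in $f^{-1}(K)$, hence compact.

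The main obstacle is essentially the same step in both halves, namely certifying that limits of sequences in $U$ with Fatou images actually stay in $U$ and not on $\partial U$. Once the inclusion $\partial U \subseteq \mathcal{J}_f$ (together with $f^{-1}(\mathcal{F}_f) = \mathcal{F}_f$) is exploited, the rest is routine point-set topology.
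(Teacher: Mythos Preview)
Your argument is correct. The key ingredient---that $\partial U\subseteq \mathcal{J}_f$ together with $f^{-1}(\mathcal{F}_f)=\mathcal{F}_f$ forces limits of sequences in $U$ with Fatou image to remain in $U$---is exactly what makes both halves work, and your sequence arguments carry it through cleanly.

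The paper takes a more packaged route. Rather than arguing directly with sequences, it observes that $f:\widehat{\mathbb{C}}\to\widehat{\mathbb{C}}$ is proper (continuous on a compact space), so its restriction $f:f^{-1}(V)\to V$ is proper for any open $V$; taking $V=\mathcal{F}_f$ and using $f^{-1}(\mathcal{F}_f)=\mathcal{F}_f$ gives that $f:\mathcal{F}_f\to\mathcal{F}_f$ is proper. Then it invokes the general fact that the restriction of a proper map to a connected component of the domain is again proper, from which both conclusions drop out (properness immediately, and $f(U)$ being a full component because a proper holomorphic map has image that is simultaneously closed and open). Your approach unpacks precisely these general facts by hand; what you gain is transparency about where the boundary-in-Julia observation enters, while the paper's argument is shorter and highlights that the result is really a formal consequence of properness and complete invariance.
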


\begin{proof} The rational map $f:\widehat{\mathbb{C}}\to \widehat{\mathbb{C}}$ is a proper map.
Hence, for any open set $V\subset\widehat{\mathbb{C}}$, $f:f^{-1}(V)\to V$ is a proper
map. In particular, this holds for $U=\mathcal{F}_f$. But since
$f^{-1}(\mathcal{F}_f)=\mathcal{F}_f$ by Proposition \ref{invariance}, we get that
$f:\mathcal{F}_f\to \mathcal{F}_f$ is a proper map. Then the proposition
follows, since the restriction of a proper map to a connected
component of the domain is still proper.
\end{proof}

\begin{definition}
A connected component $U$ of $\mathcal{F}_f$ is called a {\em Fatou component of $f$}. 
\end{definition}

\begin{definition}
A Fatou component is  {\em periodic} if
there exists an integer $k\geq 1$ such that
\[f^{\circ k}(U)=U.\]
\end{definition}

Note that if $U$ is periodic of period $k$ for $f$, then it is invariant
for $f^{\circ k}$.

\begin{definition}
An invariant Fatou component $U$ of $f$ is called
\begin{itemize}
\item[$\bullet$]{a {\em (super)attracting domain} if there is a
(super)attracting fixed point $\alpha\in U$ ($f(\alpha)=\alpha$ and
$0\le|f'(\alpha)|<1$) and the sequence $f^{\circ n}$ converges uniformly to
$\alpha$ on every compact subset of $U$;}
\item[$\bullet$]{a {\em parabolic domain} if there is a fixed point $\alpha\in
\partial U$ with $f'(\alpha)=1$, and the sequence $f^{\circ n}$
converges uniformly to
$\alpha$ on every compact subset of $U$;}
\item[$\bullet$]{a {\em Siegel disk} if it is simply connected, and if there
exists
an isomorphism $h:U\to \mathbb{D}$ such that
\[h\circ f\circ h^{-1}(z)=e^{2i\pi\theta}z\]
with $\theta\in \mathbb{R}\setminus \mathbb{Q}$;}
\item[$\bullet$]{a {\em Herman ring} if it is doubly connected, and if there
exists a radius $r$ and an isomorphism
\[h:U\to {\cal A}_r:=\{z\in \mathbb{C}~;~r<|z|<1\},\]
such that
\[h\circ f\circ h^{-1}(z)=e^{2i\pi\theta }z\]
with $\theta\in \mathbb{R}\setminus \mathbb{Q}$.}
\end{itemize}
\end{definition}

\begin{remark}
Clearly, the possibilities are mutually exclusive. Theorem
\ref{classificationFatou} below asserts that these are the only
possibilities and it is known that they all occur.
The existence of
Siegel disks was proved in 1942 by Siegel, and the existence of Herman
rings was proved by Herman in 1981.

Figure \ref{rotdomains} shows the Julia set of a polynomial having a Siegel disk
and the Julia set of a rational map having a Herman ring.
\begin{figure}[htb]
\centerline{
\begin{picture}(0,0)%
\includegraphics[height=6.8cm]{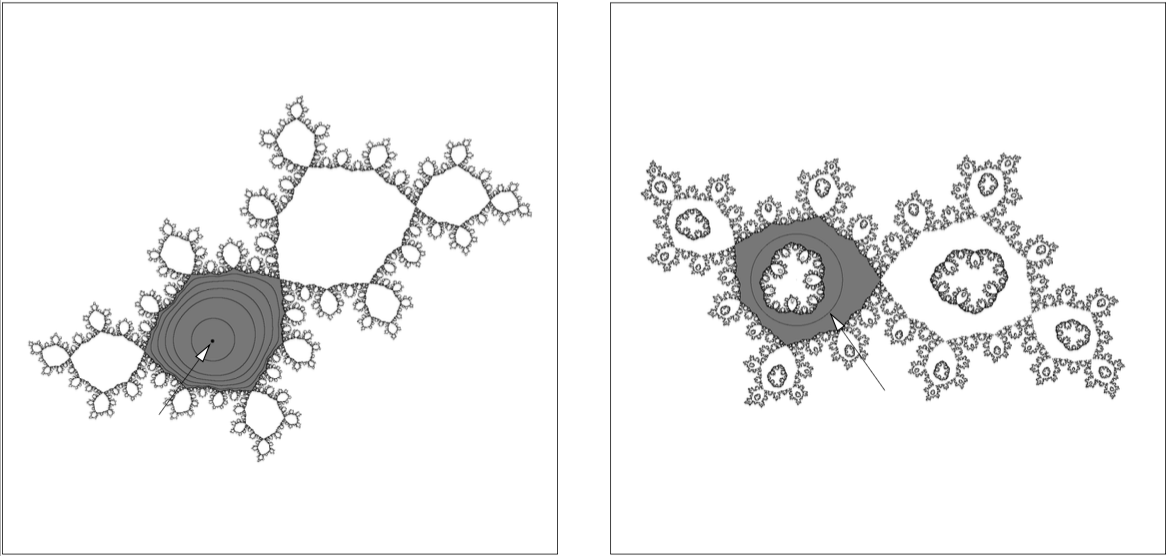}%
\end{picture}%
\setlength{\unitlength}{1657sp}%
\begin{picture}(16122,7676)(739,-7350)
\put(2701,-5731){$0$}
\put(7516,-1636){$\mathcal{J}_P$}
\put(12871,-5371){$S^1$}
\put(15211,-5416){$\mathcal{J}_f$}
\end{picture}
}
\caption{Left: the polynomial $P:z\mapsto e^{i\pi(\sqrt5 -1)} z+ z^2$ has a
Siegel disk (colored grey). We have drawn the
orbits of some points in the Siegel disk. Each orbit accumulates on a
$\mathbb{R}$-analytic circle.
Right: if $t\in \mathbb{R}/\mathbb{Z}$ is chosen carefully enough, the rational map
$f(z)=e^{2i\pi t} z^2(z-4)/(1-4z)$ has a Herman ring. It leaves the circle
$S^1$ invariant and is conjugate to an irrational rotation on $S^1$. The
picture is drawn for $t=0.61517321588\ldots$
}\label{rotdomains}
\end{figure}
\end{remark}

\begin{proposition}
If $U$ is an invariant Fatou component of a rational map $f$ of degree $d\ge 2$, then
it is either simply connected, or doubly connected or
its complement has infinitely many connected components. It is doubly
connected if and only if it is a Herman ring.
\end{proposition}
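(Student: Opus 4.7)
The plan is to bound the number of complementary components of $U$ using Proposition~\ref{stableproper}, the Riemann--Hurwitz formula and an identity-principle argument, and then, in the doubly connected case, to identify the dynamics as an irrational rotation by exploiting the conformal rigidity of annuli.

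By Proposition~\ref{stableproper}, $f|_U:U\to U$ is a proper holomorphic map; let $d'\ge 1$ be its degree and $N\ge 0$ the number of critical points of $f$ in $U$, counted with multiplicity. Suppose $U$ has finitely many complementary components, say $m\ge 1$, so that $\chi(U)=2-m$. Applying Riemann--Hurwitz to $f|_U$ yields
\[
(d'-1)(2-m) = N \ge 0.
\]
If $d'\ge 2$, this immediately forces $m\le 2$. If $d'=1$, the map $f|_U$ is a conformal automorphism of $U$; but when $m\ge 3$ the group $\operatorname{Aut}(U)$ is finite, so some iterate $(f|_U)^{\circ k}$ equals $\operatorname{id}_U$, whence $f^{\circ k}=\operatorname{id}_{\widehat{\mathbb{C}}}$ by the identity principle, contradicting $\deg f^{\circ k}=d^k\ge 2$. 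Hence $m\le 2$ in every case, proving the trichotomy.

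Now assume $U$ is doubly connected ($m=2$). First I would rule out any isolated point $p\in\widehat{\mathbb{C}}\setminus U$: such a $p$ would have a punctured neighborhood contained in $U$, and would therefore be either an isolated point of $\mathcal{J}_f$ (contradicting Proposition~\ref{perfect2}) or a point of $\mathcal{F}_f$ whose Fatou component meets $U$ and therefore equals $U$, which is absurd. So both complementary components are non-degenerate continua and there exists a conformal isomorphism $h:U\to\mathcal{A}_r$ for some $0<r<1$; set $g:=h\circ f\circ h^{-1}:\mathcal{A}_r\to\mathcal{A}_r$. Since $\chi(\mathcal{A}_r)=0$, the formula above gives $N=0$, so $g$ is an unramified covering of degree $d'$. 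Up to isomorphism the only $d'$-fold cover of $\mathcal{A}_r$ is $\mathcal{A}_{r^{1/d'}}$ (via $z\mapsto z^{d'}$), so equating source with target forces $r=r^{1/d'}$, i.e.\ $d'=1$. Hence $g\in\operatorname{Aut}(\mathcal{A}_r)=S^1\rtimes\mathbb{Z}/2$. Every element of the non-rotation coset has the form $z\mapsto re^{i\alpha}/z$ and is an involution, which would force $f^{\circ 2}=\operatorname{id}_{\widehat{\mathbb{C}}}$ by the identity principle; therefore $g(z)=e^{2\pi i\theta}z$ is a rotation. A rational $\theta$ would analogously give $g^{\circ n}=\operatorname{id}$ for some $n\ge 1$, hence $f^{\circ n}=\operatorname{id}_{\widehat{\mathbb{C}}}$; so $\theta\in\mathbb{R}\setminus\mathbb{Q}$ and $U$ is a Herman ring.

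The main obstacle is the step $d'=1$ in the doubly connected case: when $\chi(U)=0$, the Riemann--Hurwitz formula only yields $N=0$ and does not bound the degree, so one must invoke the conformal rigidity of annuli (their modulus, or equivalently the explicit classification of self-covers of $\mathcal{A}_r$) to exclude $d'\ge 2$. All the other ingredients --- perfectness of $\mathcal{J}_f$, finiteness of $\operatorname{Aut}(U)$ when $m\ge 3$, the identity principle for non-constant rational maps, and the explicit description of $\operatorname{Aut}(\mathcal{A}_r)$ --- are standard.
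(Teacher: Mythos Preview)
Your argument is correct and follows the same route as the paper's proof: Riemann--Hurwitz for the trichotomy, the modulus of the annulus to force degree one in the doubly connected case, and the identity principle (phrased in the paper as ``an iterate of $f$ would have infinitely many fixed points'') to exclude finite-order automorphisms. You are in fact more careful than the paper in two places---you treat the case $d'=1$ of the trichotomy separately (the paper simply writes $\chi(U)=n/(d-1)$), and you rule out the punctured-disk possibility via perfectness of $\mathcal{J}_f$ before invoking the modulus---but the overall strategy is identical.
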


\begin{proof} We can apply the Riemann-Hurwitz formula to $f:U\to U$. It gives
\[\chi(U)=d\chi(U)-n\]
where $d$ is the degree of $f:U\to U$ and $n$ is the number of
critical points of $f$ in $U$, counting multiplicities. If $\chi(U)$
is finite, it follows that $\chi(U)=n/(d-1)$ is non negative, and
since $U$ is not the entire Riemann sphere, we have $\chi(U)=0$ or
$1$. Moreover, if $\chi(U)=0$, i.e., if $U$ is doubly connected,
then $n=0$ and $f:U\to U$ is a covering map. In addition, the degree
of $f:U\to U$ is $1$ since it preserves the modulus of $U$. Thus,
$f:U\to U$ is an isomorphism. This isomorphism cannot be of finite order, since otherwise an iterate of $f$ would have infinitely many fixed points. Therefore $U$ is a Herman ring. \end{proof}

\begin{remark}
If $f$ is a polynomial and $U$ is a bounded Fatou component, then thanks to the maximum principle $U$ is simply connected. In particular, there are no Herman rings. 
\end{remark}

The following result is due to Fatou. We will not give the detailed proof. 

\begin{theorem}[Classification of invariant Fatou
components] Let $f:\widehat{\mathbb{C}}\to \widehat{\mathbb{C}}$ be a rational function of degree $d\ge 2$. An invariant Fatou component
$U$ of $f$ is a (super)attracting domain, a parabolic domain,
a Siegel disk or a Herman ring.\label{classificationFatou} 
\end{theorem}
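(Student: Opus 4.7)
The plan is to analyse the proper holomorphic map $f:U\to U$ (Proposition~\ref{stableproper}) through the hyperbolic metric on $U$, which exists because $\widehat{\mathbb{C}}\setminus U\supset\mathcal{J}_f$ is infinite (Proposition~\ref{perfect2}). By Schwarz--Pick, $f$ is either a hyperbolic isometry (hence an automorphism of $U$) or it strictly contracts the hyperbolic distance between any two distinct points of $U$. In the contracting case, compactness of closed hyperbolic balls in $U$ upgrades this to uniform contraction on compact subsets, so every orbit converges to a single point $\alpha\in\overline U$. I split the proof accordingly.

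\emph{Isometric case.} Here $f:U\to U$ is an automorphism. A standard compactness argument in $\mathrm{Aut}(U)$ (a Lie group in the compact-open topology) produces a subsequence $f^{\circ p_k}\to\mathrm{id}$ on compact sets, so $\overline{\langle f\rangle}$ has positive dimension. The only hyperbolic Riemann surfaces whose automorphism group has positive dimension are the disk and the annuli, so $U$ is simply or doubly connected; the doubly connected case is the Herman ring case already identified just before the theorem. In the simply connected case, uniformise $U\cong\mathbb{D}$; then $f$ is a M\"obius automorphism of $\mathbb{D}$ whose iterates accumulate on $\mathrm{id}$, which forces $f$ to be elliptic (parabolic and hyperbolic M\"obius transformations have iterates converging to a boundary fixed point), so $f$ is conjugate to $z\mapsto e^{2\pi i\theta}z$. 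A rational $\theta$ would give $f^{\circ q}=\mathrm{id}_U$ for some $q$, hence $f^{\circ q}=\mathrm{id}_{\widehat{\mathbb{C}}}$ by analytic continuation, contradicting $d\geq 2$; so $\theta\in\mathbb{R}\setminus\mathbb{Q}$ and $U$ is a Siegel disk.

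\emph{Contracting case.} All orbits converge to the same point $\alpha\in\overline U$, and continuity gives $f(\alpha)=\alpha$. If $\alpha\in U$ then Schwarz--Pick yields $|f'(\alpha)|<1$ (equality would put us in the isometric case), and $U$ is the immediate basin of the (super)attracting fixed point $\alpha$. If $\alpha\in\partial U\subset\mathcal{J}_f$ then by Proposition~\ref{wherefixedpoints} the multiplier $\lambda=f'(\alpha)$ satisfies $|\lambda|=1$ (it cannot be attracting since $\alpha\notin\mathcal{F}_f$, nor repelling since orbits converge to it), and the remaining task is to show $\lambda=1$.

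The main obstacle is this last identification of $\lambda$. A priori $\lambda$ could be any indifferent value $e^{2\pi i\theta}$, and ruling out $\theta\neq 0$ requires local dynamical theory not developed in the text. For rational $\theta=p/q$ with $q>1$, the Leau--Fatou flower theorem describes the local basin of $\alpha$ as a union of petals cyclically permuted by $f$ with period $q$, so the Fatou components containing them are only $f^{\circ q}$-invariant, not $f$-invariant, contradicting $f(U)=U$ unless $q=1$. For irrational $\theta$ (Siegel or Cremer), Siegel points lie in $\mathcal{F}_f$ and thus cannot be boundary points of a Fatou component, while Cremer points do not attract any open set of orbits (a local hedgehog-type argument), again contradicting that the whole of $U$ is attracted to $\alpha$. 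These local dynamical inputs lie beyond the global Montel-theorem machinery developed so far and form the technical heart of Fatou's classification.
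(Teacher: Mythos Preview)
Your Schwarz--Pick dichotomy (isometry versus strict contraction) is a legitimate alternative to the paper's decomposition by whether $(f^{\circ n}|_U)$ admits a non-constant limit; the two are essentially equivalent, and your handling of the isometric/rotation-domain case parallels the paper's Case~1. The main structural difference is in the final step: where you argue $\lambda=1$ by a case analysis (Leau--Fatou petals for roots of unity, Siegel/Cremer dichotomy for irrational multiplier), the paper instead invokes the \emph{Snail Lemma}: any path $\rho:[0,\infty)\to U\setminus\{\alpha\}$ with $\rho(t+1)=f(\rho(t))$ and $\rho(t)\to\alpha$ forces $f'(\alpha)=1$ or $|f'(\alpha)|<1$ directly. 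This is more economical---in particular it bypasses your assertion that Cremer points attract no open set, whose standard proofs themselves pass through the Snail Lemma or something equivalent.

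There is also a genuine gap in your contracting case. The sentence ``compactness of closed hyperbolic balls \dots\ so every orbit converges to a single point $\alpha\in\overline U$'' is not justified: strict hyperbolic contraction controls orbits only while they remain in a fixed compact subset of $U$, and says nothing once they escape toward $\partial U$; a priori different subsequences of $(f^{\circ n})$ could have different constant limits on $\partial U$. The paper supplies the missing step: each constant limit is a fixed point of $f$; joining any $z\in U$ to $f(z)$ by a path in $U$ and pushing it forward by $f^{\circ n}$ shows that the set of such limits is a continuum; since $f$ has only finitely many fixed points, the whole sequence $(f^{\circ n})$ converges to a single $\alpha$. You should insert this argument (or an equivalent one) before splitting into the $\alpha\in U$ and $\alpha\in\partial U$ subcases.
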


\begin{proof}[Sketch of the proof] The classification starts by studying what are the possible limit values of the sequence of iterates $(f^{\circ n}:U\to U)$.

\noindent{\bf Case 1.} There is a non constant limit value. 

\noindent{\bf Case 1.1.} There is a sequence $(n_k)$ such that $f^{\circ n_k}\to {\rm Id}$. This is a first place where we do not provide the details. One proves that 
\begin{itemize}
\item either $f$ has finite order (which is not the case since if $f^{\circ p} = {\rm Id}$ on $U$ for some $p\geq 1$, then $f^{\circ p} = {\rm Id}$ on $\widehat{\mathbb{C}}$ by analytic continuation, which is not possible since $f^{\circ p}$ is a rational map of degree $d^p>1$); 
\item or $U$ is simply connected, in which case it is a Siegel disk; 
\item or $U$ is doubly connected, in which case it is a Herman ring. 
\end{itemize}

\noindent{\bf Case 1.2.} There is a sequence $(n_k)$ such that $f^{\circ n_k}\to \phi$ with $\phi$ non constant. By continuity, $\phi$ takes its values in $\overline U$. According to the Hurwitz Theorem, $\phi$ takes its values in $U$. Extracting a subsequence, we may assume that $n_{k+1}-n_k\to +\infty$ and that $f^{\circ (n_{k+1}-n_k)}\to \psi$. Then, $\psi\circ \phi = \phi$, so that $\psi$ is the identity on the image of $\phi$, thus on the whole component $U$ by analytic continuation. In other words, we are in the previous situation : $U$ is a Siegel disk or a Herman ring.

\noindent{\bf Case 2.}  Every limit value of the sequence $(f^{\circ n})$ is a constant. Note that if the sequence  
$(f^{\circ n_k})$ converges to $\alpha$, then $\alpha$ is a fixed point of $f$. Indeed, since $f(z)\in U$ for all $z\in U$, we have that 
\[\alpha\underset{k\to +\infty}\longleftarrow f^{\circ n_k}\bigl(f(z)\bigr) = f\bigl(f^{\circ n_k}(z)\bigr)\underset{k\to +\infty}\longrightarrow f(\alpha).\]
Note also that any point $z\in U$ can be joined to its image $f(z)\in U$ by a path compactly contained in $U$. It follows that the set of limit values of the sequence $\bigl(f^{\circ n}(z)\bigr)$ is a continuum.  Since  $f$ has finitely many fixed points, the whole sequence $(f^{\circ n})$ converges to a fixed point $\alpha$ of $f$.

\noindent{\bf Case 2.1.} If $\alpha\in U$, the derivative of $f^{\circ n}$ at $\alpha$ tends to $0$ as $n\to \infty$. As a consequence, $\alpha$ is a (super)attracting fixed point and $U$ is a (super)attracting domain.

\noindent{\bf Case 2.2.} If $\alpha\in \partial U$, then $\alpha$ cannot be (super)attracting since it belongs to the Julia set $\mathcal{J}_f$. It cannot be repelling since the sequence $(f^{\circ n})$ converges to $\alpha$. So, it is indifferent. We claim that ${\rm D}_\alpha f = 1$ and so, $U$ is a parabolic domain.

Fix $z_0\in U$ and set $z_n=f^{\circ n}(z_0)$. Let $\widetilde{\rho}:[0,1]\to U$ be a continuous path with $\widetilde{\rho}(0)= z_0$ and $\widetilde{\rho}(1)= z_1$. Then we can extend $\widetilde{\rho}$ to a continuous path $\rho:[0,+\infty)\to U$ by setting $\rho (n+t) = f^{\circ n}\circ \widetilde{\rho}(t)$ for all $n\in\mathbb{N}$ and all $t\in[0,1)$. Then $\lim_{t\to+\infty}\rho(t) = \alpha$ and $\rho$ satisfies $\rho(t+1)=f\circ \rho(t)$ for all $t\in [0,+\infty)$ and we conclude using the following lemma.

\begin{lemma}[Snail lemma]
Let $V$ be a neighborhood of the origin in $\mathbb{C}$ and let ${f: V\to f(V)}$ be a holomorphic function such that $f(\alpha)=\alpha$ and $f'(\alpha)\ne0$. If there is a continuous path $\rho:[0,+\infty)\to V\setminus\{\alpha\}$ such that $\lim_{t\to+\infty}\rho(t) = \alpha$ and $f\circ \rho(t)= \rho(t+1)$
for all $t\in [0,+\infty)$ then either $|f'(\alpha)|<1$ or $f'(\alpha)=1$.
\end{lemma}

This is a second place where we do not provide the details. For a proof of this result see for example \cite[Lemma 16.2]{Milnor1}.
\end{proof}

The following theorem, due to Fatou (1905), is probably the result
which started the entire field of holomorphic dynamics.

\begin{theorem}\label{critsattracted}
A (super)attracting domain always contains at least one critical point.
\end{theorem}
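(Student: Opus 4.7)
The superattracting case is immediate: if $f'(\alpha)=0$ at the attracting fixed point $\alpha\in U$, then $\alpha$ itself is a critical point lying in $U$. So assume $\lambda:=f'(\alpha)$ satisfies $0<|\lambda|<1$. The strategy is to argue by contradiction: supposing $U$ contains no critical point of $f$, I will construct a nonconstant entire map $\psi:\mathbb{C}\to U$, which contradicts the fact that $U$ omits infinitely many points of $\widehat{\mathbb{C}}$.

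First I would invoke Koenigs' linearization theorem to obtain a local holomorphic coordinate $\phi$ near $\alpha$ with $\phi(\alpha)=0$, $\phi'(\alpha)=1$, and $\phi(f(z))=\lambda\phi(z)$. Setting $\psi:=\phi^{-1}$ on a small disk $\mathbb{D}_{r_0}\subset\mathbb{C}$, one obtains a holomorphic map $\psi:\mathbb{D}_{r_0}\to U$ with $\psi(0)=\alpha$, $\psi'(0)=1$, and the functional equation $f(\psi(w))=\psi(\lambda w)$.

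Next I would let $R$ be the supremum of radii $r>0$ for which $\psi$ admits a holomorphic extension to $\mathbb{D}_r$ with values in $U$ satisfying the functional equation, and show $R=\infty$ under the no-critical-point assumption. Suppose $R<\infty$ and fix $w_0\in\partial\mathbb{D}_R$. Since $|\lambda w_0|=|\lambda|R<R$, the point $\psi(\lambda w_0)\in U$ is already defined. The functional equation forces $\psi(w)\in f^{-1}(\psi(\lambda w))$ for all $w$ in the current domain; as $w\to w_0$ from within $\mathbb{D}_R$, $\psi(\lambda w)\to\psi(\lambda w_0)\in U$. By properness of $f:U\to U$ (Proposition~\ref{stableproper}), the preimage in $U$ of a compact neighborhood of $\psi(\lambda w_0)$ is compact, so $\psi(w)$ stays in a compact subset of $U$. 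A connectedness argument (the intersection of $\mathbb{D}_R$ with any small ball around $w_0$ is connected) forces $\psi$ to accumulate at a single preimage $z_0\in f^{-1}(\psi(\lambda w_0))\cap U$. Since $z_0$ is not a critical point, $f$ admits a local holomorphic inverse $g$ near $\psi(\lambda w_0)$ with $g(\psi(\lambda w_0))=z_0$, and $\tilde\psi(w):=g(\psi(\lambda w))$ extends $\psi$ holomorphically across $w_0$ (the extension agrees with $\psi$ by uniqueness of the local sheet through $z_0$). Covering $\partial\mathbb{D}_R$ by such neighborhoods contradicts maximality, so $R=\infty$.

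We thus obtain an entire map $\psi:\mathbb{C}\to U\subset\widehat{\mathbb{C}}$ with $\psi'(0)=1$, in particular nonconstant. But $U\subset\mathcal{F}_f$ is disjoint from $\mathcal{J}_f$, which is infinite by Proposition~\ref{perfect2}; hence $\psi$ omits at least three values in $\widehat{\mathbb{C}}$. After a M\"obius change of coordinates sending one omitted value to $\infty$, $\psi$ becomes an entire map $\mathbb{C}\to\mathbb{C}$ omitting two finite values, hence constant by Picard's little theorem, contradicting $\psi'(0)=1$. The main obstacle is the extension step: one must verify carefully that, as $w$ approaches the boundary of the maximal disk, $\psi(w)$ accumulates at a \emph{unique} preimage in $U$ rather than splitting between several sheets or escaping to $\partial U$, and it is exactly here that the properness of $f:U\to U$ together with the absence of critical points do the work.
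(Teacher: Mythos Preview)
Your proof is correct and follows essentially the same route as the paper: handle the superattracting case trivially, then in the geometrically attracting case use the Koenigs linearizer and, assuming $U$ contains no critical point, extend its inverse through the functional equation $f\circ\psi=\psi\circ M_\lambda$ to a nonconstant entire map $\psi:\mathbb{C}\to U$, contradicting hyperbolicity of $U$. The only cosmetic differences are that the paper leaves the extension step as a one-line remark while you spell it out via properness and local invertibility, and that the paper phrases the final contradiction as ``Liouville'' (lifting to the disk via the universal cover) whereas you invoke Picard; both amount to the fact that there is no nonconstant entire map into a domain omitting three points.
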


\begin{proof}
If $U$ is a superattracting domain, then it contains a fixed critical point, and the result is obvious. 
If $U$ is an attracting domain, it contains an attracting fixed point $\alpha$ with multiplier $\lambda$. We can show that there is a map $\phi:(\mathbb{C},0)\to (\widehat{\mathbb{C}},\alpha)$ which conjugates the multiplication by $\lambda$ to $f$. If $U$ does not contain any critical point, then $\phi$ extends to an entire map $\phi:\mathbb{C}\to U$ via the relation $\phi(\lambda z ) = f\circ \phi(z)$. This contradicts the Liouville Theorem. 
\end{proof}

\begin{remark}
The same is true for parabolic domains, but the proof is more difficult and requires a detailed study of the local theory near a fixed point with multiplier 1. A proof can be found for example in \cite[Theorem 10.15]{Milnor1}.
\end{remark} 

Fatou's classification of invariant Fatou components clearly provides also a classification of {\em periodic} Fatou components since they are invariant for an iterate of the rational map, and even {\em pre-periodic} Fatou components, that is components $U$ so that there exist $m\ge0$ and $p\ge 1$ such that $f^{\circ (m+p)} (U) = f^{\circ m}(U)$. The following fundamental result due to Sullivan completes the description for rational maps of degree $d\ge 2$.

\begin{theorem}[Sullivan \cite{Su}]
Let $f:\widehat{\mathbb{C}}\to \widehat{\mathbb{C}}$ be a rational map of degree $d\ge 2$. Then every Fatou component of $f$ is pre-periodic.\label{thm:2}
\end{theorem}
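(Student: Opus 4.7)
The plan is to argue by contradiction using quasiconformal deformation theory, following Sullivan's original strategy. Suppose there is a wandering Fatou component $U_0$, so that the images $U_n := f^{\circ n}(U_0)$ are pairwise disjoint connected components of $\mathcal{F}_f$. The idea is to use $U_0$ as a reservoir of infinitely many independent deformation parameters, transport them forward along the orbit to build $f$-invariant Beltrami differentials on $\widehat{\mathbb{C}}$, integrate them via the Measurable Riemann Mapping Theorem to obtain quasiconformally conjugate rational maps of the same degree $d$, and contradict the fact that the moduli space $\mathrm{Rat}_d/\mathrm{PSL}_2(\mathbb{C})$ has finite complex dimension $2d-2$.

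Concretely, fix $\mu_0 \in L^\infty(U_0)$ with $\|\mu_0\|_\infty < 1$. Since $f$ has only finitely many critical points, the map $f\colon U_n \to U_{n+1}$ is an unramified cover for all but finitely many $n$, so the pushforward $\mu_n := (f^{\circ n})_*\mu_0$ is a well-defined Beltrami differential on $U_n$. The disjointness of the $U_n$ means that assembling the $\mu_n$ and extending by $0$ on the complement (in particular on $\mathcal{J}_f$, which is completely invariant by Proposition \ref{invariance}) yields an $f$-invariant Beltrami coefficient $\mu$ on $\widehat{\mathbb{C}}$ with $\|\mu\|_\infty<1$. The Measurable Riemann Mapping Theorem produces a quasiconformal homeomorphism $\phi_\mu\colon \widehat{\mathbb{C}}\to \widehat{\mathbb{C}}$ with dilatation $\mu$, unique up to Möbius post-composition. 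The $f$-invariance of $\mu$ ensures that $f_\mu := \phi_\mu \circ f \circ \phi_\mu^{-1}$ has zero dilatation, hence is holomorphic, and so is rational of the same topological degree $d$.

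This yields a map $\Phi\colon \mathrm{Bel}(U_0) \to \mathrm{Rat}_d/\mathrm{PSL}_2(\mathbb{C})$ from the unit ball of $L^\infty(U_0)$ into the $(2d-2)$-dimensional moduli space, which is holomorphic by the holomorphic dependence of the MRMT solution on the parameter $\mu_0$. Since the source is an infinite-dimensional complex Banach manifold and the target is finite-dimensional, any lower bound showing that $\Phi$ has image of dimension at least $2d-1$ (equivalently, fibers of finite codimension) delivers the contradiction and completes the proof.

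The hard part, and the technical core of Sullivan's argument, is precisely this injectivity modulo Möbius conjugation. Two Beltrami differentials $\mu_0, \mu_0'$ on $U_0$ induce Möbius-equivalent rational maps $f_\mu, f_{\mu'}$ only if $\phi_{\mu}\circ \phi_{\mu'}^{-1}$ agrees with a Möbius transformation off the grand orbit of $U_0$, which forces the infinitesimal difference $\mu_0-\mu_0'$ to lie in the finite-dimensional tangent space to the action of $\mathrm{PSL}_2(\mathbb{C})\times \mathrm{Aut}(U_0)$ on deformations. One must then show that $L^\infty(U_0)$ modulo this finite-dimensional subspace still has dimension at least $2d-1$; carrying out this linear-algebraic bookkeeping carefully, controlling the ambiguity coming from the automorphisms of $U_0$ and from the finitely many critical orbits that may visit the wandering chain, is the delicate step. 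Once this is established, the resulting holomorphic injection from a $(2d-1)$-dimensional family into a $(2d-2)$-dimensional variety is impossible, contradicting the existence of $U_0$.
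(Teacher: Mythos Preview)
The paper does not give a proof of this theorem: it states the result, remarks that Sullivan's argument relies on the Ahlfors--Bers Measurable Riemann Mapping Theorem, and refers the reader to Sullivan's original paper and to an alternative proof of Epstein based on Bers density for quadratic differentials. Your outline is a faithful summary of Sullivan's original strategy, and you correctly identify the injectivity-up-to-M\"obius step as the crux.

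That said, the construction as written has two concrete gaps before one even reaches the hard step. First, the pushforward $\mu_n := (f^{\circ n})_*\mu_0$ is \emph{not} well-defined when $f:U_{n-1}\to U_n$ is an unramified cover of degree $>1$: a Beltrami differential does not push forward through a non-injective map without an averaging or compatibility condition. The standard remedy is to first reduce to the case where some forward iterate $U_N$ is simply connected and the maps $f:U_n\to U_{n+1}$ are conformal isomorphisms for $n\ge N$ (a nontrivial reduction, handled e.g.\ via Riemann--Hurwitz and Baker's lemma), and to place $\mu_0$ on $U_N$ rather than on $U_0$. Second, extending by $0$ off the forward orbit $\bigcup_{n\ge 0}U_n$ does \emph{not} yield an $f$-invariant Beltrami coefficient: any component $V$ of $f^{-1}(U_0)$ that is not in the forward chain would then carry $\mu=0$ while $f^*\mu|_V$ is the pullback of $\mu_0$, generically nonzero. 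One must spread $\mu_0$ to the entire grand orbit by pullback, and only then extend by $0$ on the complement. Finally, your description of the kernel of $\Phi$ as the tangent space to ``$\mathrm{PSL}_2(\mathbb{C})\times\mathrm{Aut}(U_0)$'' is not quite the right bookkeeping: the genuine obstruction is the space of quasiconformal self-conjugacies of $f$ supported on the grand orbit, and controlling its dimension uses that such a map is conformal on $\mathcal{J}_f$ and on every non-wandering component, together with the boundary rigidity of $\partial U_0\subset\mathcal{J}_f$. This is where Sullivan's real work lies, and it is not captured by a finite-dimensional group quotient alone.
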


The proof of Sullivan's non-wandering Theorem~\ref{thm:2} strongly relies on the Ahlfors-Bers mesurable mapping Theorem for quasi-conformal functions and we refer to the original paper of Sullivan \cite{Su} for it. The recent notes \cite{XavierKAWA} present a proof due to Adam Epstein and based on a density result of Bers for quadratic differentials. Such results are strongly one-dimensional and do not have an analogue in higher dimension, making impossible to mimic Sullivan's proof there. Besides this observation, little was known about this problem until recently. In Section \ref{sec:8} we will present some recent results on Fatou components in dimension 2.

\section{Parabolic implosion in dimension 1}
\label{sec:7}

In this section, we recall the main ingredients in the proof that the Julia set ${\cal J}_f$ does not depend continuously on $f$ for the Hausdorff topology on the space of compact subsets of $\mathbb{C}$. Lavaurs proved the following result (see also \cite{douady}). 
\begin{theorem}[Lavaurs \cite{Lavaurs}]
Assume $f:\mathbb{C}\to \mathbb{C}$ is a polynomial fixing $0$ with $f(z) = z+ z^2 + {O}(z^3)$. Then, 
\[{\cal J}_f\subsetneq \limsup_{\delta\to 0} {\cal J}_{f+\delta}.\]
\end{theorem}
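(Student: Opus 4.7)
The plan is to exploit the \emph{parabolic implosion} phenomenon at the fixed point $0$, where an arbitrarily small perturbation of $f$ causes the parabolic basin to ``explode'': points that were in $\mathcal{F}_f$ become accumulation points of $\mathcal{J}_{f+\delta_k}$ for carefully chosen sequences $\delta_k \to 0$. Since $f(z) = z + z^2 + O(z^3)$, the Leau--Fatou flower theorem provides an attracting petal $V^{\textrm{att}}$ and a repelling petal $V^{\textrm{rep}}$ tangent at $0$ to the negative and positive real axes respectively, together with Fatou coordinates $\phi^{\textrm{att}}\colon B \to \mathbb{C}$ (defined on the entire parabolic basin $B \subset \mathcal{F}_f$) and $\phi^{\textrm{rep}}\colon V^{\textrm{rep}} \to \mathbb{C}$, each satisfying $\phi \circ f = \phi + 1$. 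For each $\alpha \in \mathbb{C}$ these glue into the \emph{Lavaurs map}
\[\mathcal{L}_\alpha := (\phi^{\textrm{rep}})^{-1} \circ T_\alpha \circ \phi^{\textrm{att}},\]
with $T_\alpha(w) = w + \alpha$, a non-constant holomorphic map defined on a non-empty open subset of $B$.

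The heart of the argument is the Douady--Lavaurs approximation theorem: for each $\alpha$ there exist sequences $\delta_k \to 0$ (with a specific choice of phase for $\sqrt{-\delta_k}$) and integers $n_k \to \infty$ with $n_k - \pi/\sqrt{-\delta_k} \to \alpha'$ (for some $\alpha'$ differing from $\alpha$ by a universal constant), such that $f_{\delta_k}^{\circ n_k} \to \mathcal{L}_\alpha$ locally uniformly on the domain of $\mathcal{L}_\alpha$, where $f_\delta := f+\delta$. The proof requires analyzing $f_\delta$ near $0$: the fixed point bifurcates into two fixed points near $\pm\sqrt{-\delta}$, opening a narrow ``corridor''. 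One constructs perturbed incoming and outgoing Fatou coordinates for $f_{\delta_k}$ on an \'Ecalle cylinder around $0$ and carefully matches them across the gate, quantifying the transit time.

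Granting the approximation, I would fix a repelling periodic point $p \in \mathcal{J}_f$ of $f$ (Proposition~\ref{wherefixedpoints} combined with Fatou's density theorem recalled in Section~\ref{subsec:5.2} guarantees such points exist). Since $\mathcal{L}_\alpha$ is non-constant, its image is open; by varying $\alpha$ I can arrange $p \in \mathcal{L}_\alpha(B)$, yielding $z_0 \in B \subset \mathcal{F}_f$ with $\mathcal{L}_\alpha(z_0) = p$. The implicit function theorem applied to $f_\delta^{\circ q}(z) - z = 0$ at $p$ (with $q$ the period of $p$ and derivative non-zero because $p$ is repelling) provides repelling periodic points $p_k$ of $f_{\delta_k}$ with $p_k \to p$, hence $p_k \in \mathcal{J}_{f+\delta_k}$. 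Applying Hurwitz's theorem to the sequence $f_{\delta_k}^{\circ n_k} - p_k \to \mathcal{L}_\alpha - p$ on a small disk around the isolated zero $z_0$, I obtain zeros $z_k \to z_0$ of $f_{\delta_k}^{\circ n_k} - p_k$. Total invariance of the Julia set (Proposition~\ref{invariance}) gives $z_k \in \mathcal{J}_{f+\delta_k}$, so $z_0 \in \limsup_{\delta\to 0} \mathcal{J}_{f+\delta}$; since $z_0 \in \mathcal{F}_f$, the inclusion is strict. The inclusion $\mathcal{J}_f \subset \limsup_{\delta\to 0} \mathcal{J}_{f+\delta}$ follows from the same implicit function theorem argument at every repelling periodic point of $f$, combined with their density in $\mathcal{J}_f$.

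The main obstacle is clearly the approximation theorem: the construction of perturbed Fatou coordinates on the \'Ecalle cylinder, the precise transit-time estimate through the corridor, and the matching of incoming and outgoing coordinates. This is the technical core of parabolic implosion theory and is notoriously delicate. The other ingredients (Leau--Fatou flower, Hurwitz, total invariance, implicit function theorem at a repelling cycle) are standard and already essentially available in the preceding sections.
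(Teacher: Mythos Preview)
Your proposal is correct and follows the same overall strategy as the paper: both reduce the theorem to the Douady--Lavaurs approximation result (that suitably tuned high iterates of $f+\delta$ converge to a Lavaurs map on the parabolic basin), and both explicitly identify this approximation as the technical core that is not proved from scratch. In fact the paper does not give a proof of the theorem at all: it defines the attracting Fatou coordinate $\Phi_f$, the repelling parametrization $\Psi_f$, and the single Lavaurs map ${\cal L}_f=\Psi_f\circ\Phi_f$, then packages everything (approximation \emph{and} Julia-set inclusions) into Proposition~\ref{prop:lavaurs}, which it states without proof and illustrates only for the M\"obius map $z/(1-z)$.

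Where you differ from the paper is in the final deduction. The paper formulates the conclusion through the enriched sets ${\cal K}({\cal L}_f)=\bigcap_n {\cal L}_f^{-n}({\cal K}_f)$ and ${\cal J}({\cal L}_f)=\partial{\cal K}({\cal L}_f)$, asserting ${\cal J}_f\subsetneq{\cal J}({\cal L}_f)\subseteq\liminf{\cal J}_{f+\varepsilon_n^2}$ for a \emph{single} phase; this avoids your phase parameter $\alpha$ but hides the mechanism. Your Hurwitz argument (pull back a persisting repelling periodic point through the convergence $f_{\delta_k}^{\circ n_k}\to{\cal L}_\alpha$) is more hands-on and makes transparent exactly why a basin point lies in $\limsup{\cal J}_{f+\delta}$. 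One small point worth tightening: to guarantee that some ${\cal L}_\alpha$ hits a repelling periodic point you are implicitly using that the globally extended repelling parametrization $\Psi_f:\mathbb{C}\to\mathbb{C}$ is non-constant entire, hence (Picard) omits at most one value, so its image certainly contains repelling periodic points of $f$; the paper's global extension of $\Psi_f$ via $\Psi_f\circ T_1=f\circ\Psi_f$ gives you this for free.
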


The proof is based on the description of limits of iterates of $f+\delta$ in terms of maps called {\em Fatou coordinates} that will be introduced in subsection \ref{subsec:7.2}. Those limits are called {\em Lavaurs maps}. 

\subsection{Parabolic basin}
\label{subsec:7.1}

In the rest of this section, we assume that $f:\mathbb{C}\to \mathbb{C}$ is a polynomial whose expansion near $0$ is of the form 
\[f(z) = z+ z^2 + az^3 + {O}(z^4)\quad\text{with}\quad a\in \mathbb{C}.\]
To understand the local dynamics of $f$ near $0$, it is convenient to consider the change of coordinates $Z:=-1/z$. In the $Z$-coordinate, the expression of $f$ becomes 
\[F(Z)= Z + 1 + \frac{b}{Z} + {O}\left(\frac{1}{Z^2}\right)\quad\text{with}\quad b:=1-a.\]
In particular, if $R>0$ is large enough, $F$ maps the right half-plane $\{\Re(Z)>R\}$ into itself, so that if $r>0$ is close enough to $0$, $f$ maps the disk $D(-r,r)$ into itself. In addition, the orbit under $f$ of any point $z\in D(-r,r)$ converges to $0$ tangentially to the real axis. 
Similarly, if $r>0$ is sufficiently close to $0$, there is a branch of $f^{-1}$ which maps the disk $D(r,r)$ into itself and the orbit under that branch of $f^{-1}$ of any point $z\in D(r,r)$ converges to $0$ tangentially to the real axis. 

\begin{definition}
The {\em basin} ${\cal B}_f$ is the open set of points whose orbit under iteration of $f$ intersects the disks $D(-r,r)$ for all $r>0$. 
\end{definition}

%\begin{figure}[htb]
%\setlength{\unitlength}{1cm}%
%\centerline{
%\begin{picture}(5.37,8)(0,0)
%\put(0,0){\includegraphics[height=8cm]{basin.png}}
%\put(3.2,4){0}
%\put(2,3){${\cal B}_f$}
%\end{picture}
%}
%\caption{For the cubic polynomial $f(z) = z+ z^2 + 0.95 z^3$, the basin ${\cal B}_f$ (grey) is the interior of the filled-in Julia set ${\cal K}_f$.} \label{fig:basin}
%\end{figure}

\subsection{Fatou coordinates}
\label{subsec:7.2}

In order to understand further the local dynamics of $f$ near $0$, it is customary to use local {\em attracting and repelling Fatou coordinates}. In the case of a polynomial, those Fatou coordinates have global properties.

\begin{proposition}
There exists a (unique) {\em attracting Fatou coordinate} $\Phi_f:{\cal B}_f\to \mathbb{C}$ which semi-conjugates $f:{\cal B}_f\to {\cal B}_f$ to the translation $T_1:\mathbb{C}\ni Z\mapsto Z+1\in \mathbb{C}$: 
\[\Phi_f\circ f= T_1\circ \Phi_f.\] 
and satisfies the normalization: 
\[\Phi_f(z) = -\frac{1}{z} - b\log \left(-\frac{1}{z}\right)+ o(1)\quad \text{as}\quad \Re\left(-\frac{1}{z}\right)\to+\infty.\]
\end{proposition}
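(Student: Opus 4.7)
I would work primarily in the $Z = -1/z$ coordinate, where
\[
F(Z) = Z + 1 + \frac{b}{Z} + O\!\left(\frac{1}{Z^2}\right),
\]
and first construct $\Phi_F$ on a large right half-plane petal $\mathcal{P} := \{\Re Z > R\}$, then promote it to the whole basin by the functional equation. Observe that the sought normalization reads $\Phi_F(Z) = Z - b\log Z + o(1)$ as $\Re Z \to +\infty$, so the natural first guess is the \emph{approximate Fatou coordinate}
\[
\widetilde{\Phi}(Z) := Z - b\log Z.
\]
A short computation using the expansion of $F$ gives
\[
\widetilde{\Phi}\bigl(F(Z)\bigr) - \widetilde{\Phi}(Z) - 1 = \frac{b}{Z} - b\log\!\left(1 + \frac{1}{Z} + O(1/Z^2)\right) + O(1/Z^2) = O\!\left(\frac{1}{Z^2}\right).
\]
Since orbits in $\mathcal{P}$ satisfy $|F^{\circ k}(Z)| \asymp k$ (which one can check by a direct induction using $\Re F(Z) = \Re Z + 1 + o(1)$), the error terms along an orbit are summable.

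The plan is then to define
\[
\Phi_F(Z) := \widetilde{\Phi}(Z) + \sum_{k=0}^{\infty}\Bigl[\widetilde{\Phi}\bigl(F^{\circ(k+1)}(Z)\bigr) - \widetilde{\Phi}\bigl(F^{\circ k}(Z)\bigr) - 1\Bigr],
\]
or equivalently $\Phi_F(Z) = \lim_{n\to\infty}\bigl[\widetilde{\Phi}(F^{\circ n}(Z)) - n\bigr]$. I would verify: (i) the series converges uniformly on compacts of $\mathcal{P}$, so $\Phi_F$ is holomorphic; (ii) the defining telescoping identity yields the semi-conjugacy $\Phi_F\circ F = \Phi_F + 1$ on $\mathcal{P}$; (iii) the tail of the sum tends to $0$ as $\Re Z \to +\infty$, giving $\Phi_F(Z) - \widetilde{\Phi}(Z) = o(1)$, which is the required normalization.

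To extend from the petal to the whole basin, I would use the invariance built into the definition of $\mathcal{B}_f$: any $z \in \mathcal{B}_f$ has an iterate $f^{\circ n}(z)$ whose $Z$-image lies in $\mathcal{P}$, so I set $\Phi_f(z) := \Phi_F\bigl(F^{\circ n}(-1/z)\bigr) - n$. The semi-conjugacy of $\Phi_F$ makes this independent of $n$ for $n$ large, hence well-defined and holomorphic on $\mathcal{B}_f$; it automatically satisfies $\Phi_f\circ f = \Phi_f + 1$, and the normalization holds since near $0$ (i.e.\ for $\Re(-1/z)$ large) no iteration is needed. For uniqueness, if $\Psi$ is another such semi-conjugacy with the same asymptotic, then on the petal $\Psi - \Phi_F$ is a holomorphic function invariant under $T_1$ that tends to $0$ at infinity, hence vanishes; the functional equation then propagates the equality to all of $\mathcal{B}_f$.

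The main technical obstacle is the convergence step: one must control $|F^{\circ k}(Z)|$ from below by something like $ck$ uniformly on compacts of $\mathcal{P}$, to ensure the $O(1/Z^2)$ error summed along the orbit gives a $O(1/k^2)$ series. Once this asymptotic on orbits is in hand, every other ingredient (holomorphy of the limit, conjugacy identity, matching the normalization, extension, uniqueness) follows by standard bookkeeping.
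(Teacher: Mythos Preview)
Your proposal is correct and follows essentially the same route as the paper: your limit $\lim_{n\to\infty}\bigl[\widetilde{\Phi}(F^{\circ n}(Z)) - n\bigr]$ is exactly the paper's $\lim_n \varPhi_n$ with $\varPhi_n(z) = Z_n - n - b\log Z_n$, and the telescoping estimate $\varPhi_{n+1}-\varPhi_n = O(1/(Z+n)^2)$ is identical. You are in fact more thorough than the paper in two respects: you spell out the extension from the petal to all of $\mathcal{B}_f$ via the functional equation, and you give the uniqueness argument (periodic plus $o(1)$ implies zero), both of which the paper's proof leaves implicit.
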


\begin{proof}
For $z\in {\cal B}_f$, set 
\[Z:= -\frac{1}{z},\quad Z_n:= -\frac{1}{f^{\circ n}(z)}\quad \text{and}\quad 
\varPhi_n(z) :=  Z_n- n - b\log Z_n.\]
We have that
\[\Re(Z_n)\to +\infty\quad \text{and}\quad Z_{n+1} = Z_n + 1 +\frac{b}{Z_n} +  {O}\left(\frac{1}{Z_n^2}\right),\]
so that 
\[\frac{1}{Z_n} = {O}\left(\frac{1}{Z+n}\right)\quad \text{as}\quad n\to +\infty.\]
As a consequence, 
\[\varPhi_{n+1}(z) - \varPhi_n(z) = Z_{n+1} - Z_n-1 - b\log\frac{Z_{n+1}}{Z_n} = {O}\left(\frac{1}{Z_n^2}\right) 
= {O}\left(\frac{1}{(Z+n)^2}\right).\]
So, the sequence $(\varPhi_n)$ converges to a limit $\Phi_f:{\cal B}_f\to \mathbb{C}$ which satisfies 
\[\Phi_f(z) = \varPhi_0(z) + {O}\left(\frac{1}{Z}\right) = -\frac{1}{z} - b\log \left(-\frac{1}{z}\right) + o(1)\quad \text{as}\quad \Re\left(-\frac{1}{z}\right)\to+\infty.\]
Passing to the limit on the relation $\varPhi_n\circ f = T_1\circ \varPhi_{n+1}$
yields the required result.  
\end{proof}

Figure \ref{fig:fatou} gives a rough idea of the behaviour of the Fatou coordinate $\Phi_f$ for the cubic polynomial $f(z):=z+z^2+0.95z^3$. The basin ${\cal B}_f$ contains the two critical points of $f$. Those points and their iterated preimages form the critical points of $\Phi_f$. Denote by $c^+$ the critical point with positive imaginary part and by $c^-$ its complex conjugate. Set $v^\pm:=\Phi_f(c^\pm)$. Points $z\in {\cal B}_f$ are colored according to the location $\Phi_f(z)$: dark grey when $\Im\bigl(\Phi_f(z)\bigr)<\Im(v^-)$, light grey when  $\Im(v^-)<\Im\bigl(\Phi_f(z)\bigr)<\Im(v^+)$ and medium grey when $\Im(v^+)<\Im\bigl(\Phi_f(z)\bigr)$.

%\begin{figure}[htb]
%\setlength{\unitlength}{1cm}%
%\centerline{
%\begin{picture}(12,8)(0,0)
%\put(0,0){\includegraphics[height=8cm]{Fatou.png}}
%\put(3.3,4.1){0}
%\put(2.3,2.85){$c^-$}
%\put(2.3,4.95){$c^+$}
%\put(7.37,0){\includegraphics[height=8cm]{Fatourange.png}}
%\put(9.2,1.15){$v^-$}
%\put(9.2,6.7){$v^+$}
%\put(5.7,4){\vector(1,0){1}}
%\put(6,4.2){$\Phi_f$}
%\end{picture}
%}
%\caption{Behavior of  $\Phi_f$ for  $f(z):=z+z^2+0.95z^3$. Right: three regions delimited by the horizontal lines passing through the critical values of $\Phi_f$. Left: the basin ${\cal B}_f$ is tiled by the preimages of those three regions by $\Phi_f$.} \label{fig:fatou}
%\end{figure}

\begin{proposition}
There exists a (unique) {\em repelling Fatou parametrization} $\Psi_f:\mathbb{C}\to \mathbb{C}$ which semi-conjugates $T_1:\mathbb{C}\to \mathbb{C}$ to $f:\mathbb{C}\to \mathbb{C}$: 
\[\Psi_f\circ T_1= f\circ \Psi_f.\] 
and satisfies the normalization:
\[\Psi_f(Z)=-\frac{1}{Z+ b\log(-Z)+o(1)}\quad\text{as}\quad\Re(Z)\to-\infty.\]
\end{proposition}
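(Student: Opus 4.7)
The plan is to build $\Psi_f$ first on a left half-plane, as the inverse of a repelling Fatou coordinate, and then to propagate it to all of $\mathbb{C}$ via the functional equation. The construction mirrors that of $\Phi_f$, but in the repelling petal, where a local branch of $F^{-1}$ replaces $F$ as the contraction, and after inversion the semi-conjugacy runs in the opposite direction.

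First I would construct a repelling Fatou coordinate $\Phi^-$ on a left half-plane $\{\Re(Z)<-R\}$ for $R$ large enough. Inverting the expansion of $F$ near $\infty$ gives a local branch $F^{-1}(W)=W-1-b/W+O(1/W^2)$ that maps $\{\Re(Z)<-R\}$ strictly into itself, and its iterates $W_n:=F^{-n}(Z)$ satisfy $\Re(W_n)\to-\infty$ with $|W_n|\gtrsim n$. Mimicking the definition of $\varPhi_n$, I would set
\[\varPhi^-_n(Z):=F^{-n}(Z)+n-b\log\bigl(-F^{-n}(Z)\bigr)\]
and use the expansions of $F^{-1}$ and $\log$ to check that $\varPhi^-_{n+1}(Z)-\varPhi^-_n(Z)=O(1/W_n^2)=O(1/n^2)$. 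The telescoping series then converges locally uniformly to a holomorphic map $\Phi^-$ satisfying $\Phi^-(Z)=Z-b\log(-Z)+o(1)$ as $\Re(Z)\to-\infty$, and passing to the limit in $\varPhi^-_n\circ F=T_1\circ\varPhi^-_{n-1}$ gives $\Phi^-\circ F=T_1\circ\Phi^-$.

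The asymptotic implies that $\Phi^-$ is a biholomorphism from some left half-plane onto another such region. Bootstrapping the relation $W-b\log(-W)+o(1)=Z$ yields $W=Z+b\log(-Z)+o(1)$; in the $z=-1/W$ coordinate this is precisely the prescribed normalization, so setting $\Psi_f^{\mathrm{loc}}:=(\Phi^-)^{-1}$ (composed with $W\mapsto -1/W$) gives a local map on a left half-plane satisfying $\Psi_f^{\mathrm{loc}}\circ T_1=f\circ\Psi_f^{\mathrm{loc}}$. I would then extend $\Psi_f$ to all of $\mathbb{C}$ by
\[\Psi_f(Z):=f^{\circ n}\bigl(\Psi_f^{\mathrm{loc}}(Z-n)\bigr)\]
for any integer $n$ with $\Re(Z-n)<-R'$. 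Independence of $n$ comes from iterating the local functional equation: if $n>m$ both work, then $\Re(Z-k)\leq\Re(Z-m)<-R'$ for every $k$ between $m$ and $n$, so all intermediate points stay in the local domain and one gets $f^{\circ(n-m)}\circ\Psi_f^{\mathrm{loc}}(Z-n)=\Psi_f^{\mathrm{loc}}(Z-m)$. Because $f$ is a polynomial, the resulting $\Psi_f$ is entire and takes values in $\mathbb{C}$; uniqueness follows from the fact that the prescribed expansion pins down $\Psi_f^{\mathrm{loc}}$ (two candidates would differ by a $1$-periodic holomorphic function tending to $0$ at $-\infty$, hence by $0$) and then the extension formula forces agreement everywhere.

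The main obstacle will be extracting the estimate $\varPhi^-_{n+1}-\varPhi^-_n=O(1/n^2)$ with enough uniformity on left half-planes to both produce $\Phi^-$ and invert it with the sharp asymptotic. This rests on the precise expansion of $F^{-1}$ near $\infty$ together with the a priori lower bound $|W_n|\gtrsim n$, which itself requires a short induction using $W_{n+1}-W_n=-1+O(1/W_n)$. Everything else is routine bookkeeping closely parallel to the attracting case already treated in the text.
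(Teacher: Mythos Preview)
Your proposal is correct and follows essentially the same approach as the paper: construct a repelling Fatou coordinate on a left half-plane by iterating a local branch of the inverse, prove convergence via the same $O(1/n^2)$ telescoping estimate, and then invert. The only differences are cosmetic---the paper runs the computation in the $z$-coordinate (using a branch $g$ of $f^{-1}$ on $D(r,r)$) rather than in the $Z$-coordinate, and it leaves the extension from the left half-plane to all of $\mathbb{C}$ and the uniqueness argument implicit, whereas you spell both out; your version is in fact slightly more complete in that respect.
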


\begin{proof}
Choose $r>0$ sufficiently close to $0$ so that there is a branch $g$ of $f^{-1}$ which maps the disk $D(r,r)$ into itself. For $z\in D(r,r)$, set 
\[Z:= -\frac{1}{z},\quad Z_n := -\frac{1}{g^{\circ n}(z)}\quad\text{and}\quad \varPhi_n(z):= Z_n+n -b\log(-Z_n).\]
Note that
\[Z_n = Z_{n+1} + 1 + \frac{b}{Z_{n+1}} + {O}\left(\frac{1}{Z_{n+1}^2}\right). \]
So, as in the previous proof
\[\varPhi_{n+1}(z)-\varPhi_n(z) = Z_{n+1} - Z_n+1 + b\log\frac{Z_{n+1}}{Z_n} = {O}\left(\frac{1}{Z_{n+1}^2}\right) = {O}\left(\frac{1}{(Z-n)^2}\right).\]
The sequence $\varPhi_n$ converges in the left half-plane $\bigl\{\Re(Z)<-1/(2r)\bigr\}$ to a limit $\Phi_g$ which satisfies
\[\Phi_g(z) = \varPhi_0(z) + {O}\left(\frac{1}{Z}\right) = Z - b\log (-Z)+ {o}(1)\quad \text{as}\quad \Re\left(-\frac{1}{z}\right)\to-\infty.\]
Passing to the limit on the equation $\varPhi_{n+1}\circ f = T_1\circ \Phi_n$
shows that $\Phi_g$ conjugates $f$ to $T_1$. 
The inverse $\Psi_f$ of $\Phi_g$ conjugates $T_1$ to $f$ and 
\[Z = \Phi_g\circ \Psi_f(Z) = -\frac{1}{\Psi_f(Z)} -b\log \left(-\frac{1}{\Psi_f(Z)}\right)+ { o}(1) = -\frac{1}{\Psi_f(Z)} -b\log (-Z)+ { o}(1)\]
as $\Re(Z)\to -\infty$. 
\end{proof}

\subsection{Lavaurs maps}
\label{subsec:7.3}

\begin{definition}
The {\em Lavaurs map} ${\cal L}_f:{\cal B}_f\to \mathbb{C}$ is the map 
\[{\cal L}_f:=\Psi_f\circ \Phi_f:{\cal B}_f\to \mathbb{C}.\]
\end{definition}

Note that the Lavaurs map ${\cal L}_f$ commutes with $f$. Indeed, 
$\Phi_f\circ f = T_1\circ \Phi_f$ and $\Psi_f\circ T_1 = f\circ \Psi_f$, so that
\[{\cal L}_f\circ f = \Psi_f\circ  \Phi_f\circ f = \Psi_f\circ T_1\circ \Phi_f =  f\circ \Psi_f\circ \Phi_f= f\circ {\cal L}_f.\]

Figure \ref{fig:lavaurs} gives a rough idea of the behaviour of the Lavaurs map ${\cal L}_f$ for the cubic polynomial $f(z):=z+z^2+0.95z^3$. Points in the basin ${\cal B}_f$ are colored according to the location of their image by ${\cal L}_f$: grey when ${\cal L}_f(z)\in {\cal B}_f$, white otherwise. The restriction of ${\cal L}_f$ to each bounded white domain is a covering of
$\mathbb{C}\setminus \overline{\cal B}_f$.
%\end{document}

\begin{figure}%[htb]
\centerline{
\includegraphics[height=7cm]{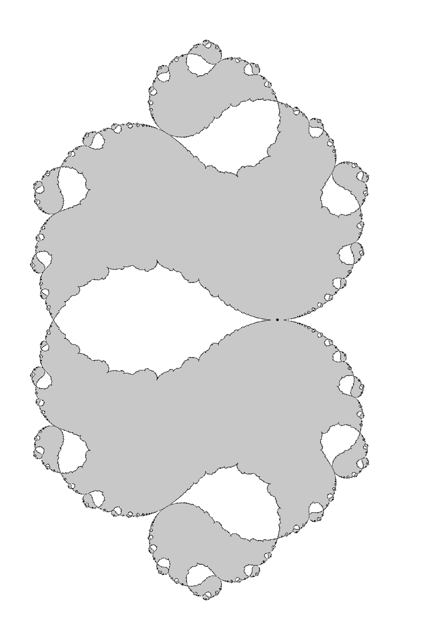}
}
\caption{Behavior of  ${\cal L}_f$ for  $f(z):=z+z^2+0.95z^3$. } \label{fig:lavaurs}
\end{figure}

\subsection{Discontinuity of the Julia set}
\label{subsec:7.4}

Set 
\[{\cal K}({\cal L}_f):=\bigcap_{n\geq 0} {\cal L}_f^{-n}({\cal K}_f)\quad\text{and}\quad {\cal J}({\cal L}_f):=\partial {\cal K}({\cal L}_f).\]
Figure \ref{fig:enriched} shows ${\cal K}({\cal L}_f)$ for the cubic polynomial $f(z) = z+z^2+0.95 z^3$. The Lavaurs map ${\cal L}_f$ has  two complex conjugate sets of attracting fixed points. The fixed points of ${\cal L}_f$ are indicated and their basins of attraction are colored (dark grey for one of the fixed points, and light grey for the others). Those basins form the interior of ${\cal K}({\cal L}_f)$. The black set ${\cal J}({\cal L}_f)$ is the topological boundary of ${\cal K}({\cal L}_f)$.

\begin{figure}[htbp]
\centerline{
\includegraphics[height=7cm]{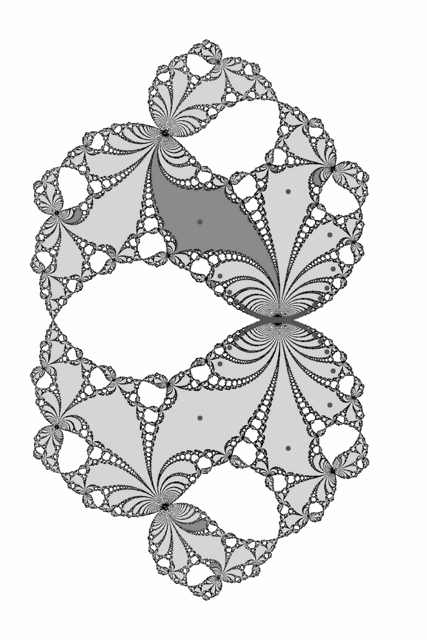}
}
\caption{The set ${\cal K}({\cal L}_f)$ for $f(z) = z+ z^2 + 0.95z^3$.}\label{fig:enriched}
\end{figure}

The following result may be considered as the main reason why Lavaurs maps are studied in holomorphic dynamics.

\begin{proposition}[Lavaurs \cite{Lavaurs}]
Let $f:\mathbb{C}\to \mathbb{C}$ be a polynomial whose expansion at~$0$ is $f(z) = z + z^2 + {O}(z^3)$. Let $(N_n)$ be a sequence of integers tending to $+\infty$ and $(\varepsilon_n)$ be a sequence of complex numbers tending to $0$, such that 
\[N_n-\frac{\pi}{\varepsilon_n}\to  0.\]
Then, 
\[(f+\varepsilon_n^2)^{\circ N_n} \to {\cal L}_f\quad\text{locally uniformly on }{\cal B}_f.\]
In addition, 
\[{\cal J}_f\subsetneq {\cal J}({\cal L}_f)\subseteq \liminf {\cal J}_{f+\varepsilon_n^2} \quad\text{and}\quad 
\limsup {\cal K}_{f+\varepsilon_n^2}\subseteq {\cal K}({\cal L}_f)\subsetneq {\cal K}_f.\]\label{prop:lavaurs}
\end{proposition}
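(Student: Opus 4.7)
The plan is to construct Fatou coordinates for the perturbed maps $f_\varepsilon := f + \varepsilon^2$ and to control how many iterations are needed to cross the narrow ``egg-beater'' channel that opens up near the parabolic fixed point $0$ when the perturbation is turned on. After the global change of coordinate $Z = -1/z$ the map $f_\varepsilon$ becomes
\[
F_\varepsilon(Z) = Z + 1 + \varepsilon^2 Z^2 + O(1),
\]
whose leading part is the Euler scheme for the ODE $\dot Z = 1 + \varepsilon^2 Z^2$ with solution $Z(t) = \varepsilon^{-1}\tan(\varepsilon t + c)$. The half-period $\pi/\varepsilon$ of $\tan$ accounts heuristically for the number of iterations needed to traverse the channel, explaining the hypothesis $N_n - \pi/\varepsilon_n \to 0$.

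First I would construct, for $|\varepsilon|$ sufficiently small, holomorphic maps $\Phi_\varepsilon$ on an ``incoming'' region $\Omega_\varepsilon^-$ and $\Psi_\varepsilon$ on an ``outgoing'' region $\Omega_\varepsilon^+$ in the $Z$-chart, satisfying $\Phi_\varepsilon\circ f_\varepsilon = T_1\circ\Phi_\varepsilon$ and $\Psi_\varepsilon\circ T_1 = f_\varepsilon\circ\Psi_\varepsilon$ with the same normalizations as in Subsection~\ref{subsec:7.2}. The construction is the same telescoping sum as in that subsection, now applied to $F_\varepsilon$ with errors $O(1/(Z+n)^2)$ uniform in $\varepsilon$; passing to the limit gives $\Phi_\varepsilon\to\Phi_f$ and $\Psi_\varepsilon\to\Psi_f$ locally uniformly. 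The crucial next step is the \emph{phase shift}: one shows that on any compact set contained in $\Omega_\varepsilon^-\cap\Omega_\varepsilon^+$ for all small $\varepsilon$,
\[
\Phi_\varepsilon(z) - \Psi_\varepsilon^{-1}(z) = \frac{\pi}{\varepsilon} + o(1) \qquad (\varepsilon\to 0).
\]
This is proved by comparing the orbit of $F_\varepsilon$ on its channel to the flow of $\dot Z = 1+\varepsilon^2 Z^2$, computing the transit time of the flow as the half-period $\pi/\varepsilon$, and controlling the Euler-vs-flow error uniformly on a channel of length $\pi/\varepsilon$.

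Given this, for $z\in {\cal B}_f$ choose $k$ so that $f^{\circ k}(z)$ sits deeply in the attracting petal; then for large $n$, $f_{\varepsilon_n}^{\circ k}(z)\in\Omega_{\varepsilon_n}^-$. Writing
\[
f_{\varepsilon_n}^{\circ N_n}(z) = \Psi_{\varepsilon_n}\Bigl(\Phi_{\varepsilon_n}\bigl(f_{\varepsilon_n}^{\circ k}(z)\bigr) + \bigl(N_n - k - \pi/\varepsilon_n\bigr)\Bigr) + o(1)
\]
using the phase shift, and noting that $\Phi_{\varepsilon_n}(f_{\varepsilon_n}^{\circ k}(z))\to \Phi_f(z)+k$ while $N_n-k-\pi/\varepsilon_n\to -k$, the right-hand side tends to $\Psi_f(\Phi_f(z)) = {\cal L}_f(z)$, proving local uniform convergence to the Lavaurs map. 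For the Julia set consequences: the inclusion $\limsup{\cal K}_{f+\varepsilon_n^2}\subseteq{\cal K}({\cal L}_f)$ follows because any limit $z$ of $z_n\in{\cal K}_{f+\varepsilon_n^2}$ has bounded forward $f_{\varepsilon_n}$-orbit, and by convergence $f_{\varepsilon_n}^{\circ (mN_n)}(z_n)\to {\cal L}_f^{\circ m}(z)$ stays bounded; hence ${\cal L}_f^{\circ m}(z)\in{\cal K}_f$ for all $m\geq 0$. Dually, if $z\in{\cal J}({\cal L}_f)$ and a subsequence of the $(f_{\varepsilon_n}^{\circ m})_{m\in\mathbb{N}}$ were normal on a neighborhood $U$ of $z$, one could extract limit maps from the composed sequences $f_{\varepsilon_n}^{\circ (N_n+m)}$ that would exhibit a Fatou-like neighborhood for the Lavaurs dynamics, contradicting $z\in{\cal J}({\cal L}_f)$. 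The strict inclusions follow: ${\cal J}_f \subseteq {\cal K}_f\setminus{\cal B}_f\subseteq {\cal K}({\cal L}_f)$ vacuously, so ${\cal J}_f\subseteq \partial{\cal K}({\cal L}_f)={\cal J}({\cal L}_f)$, and strictness comes from the existence of points $z\in{\cal B}_f$ with ${\cal L}_f(z)\notin{\cal K}_f$ (since $\Psi_f\circ\Phi_f$ has unbounded range), which also gives ${\cal K}({\cal L}_f)\subsetneq {\cal K}_f$.

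The main obstacle is the phase-shift identity: one must match the discrete iteration of $F_\varepsilon$ with the continuous flow of the associated ODE uniformly across a channel whose length $\pi/\varepsilon$ tends to infinity as $\varepsilon\to 0$. The cumulative drift of the Euler scheme relative to the flow must be shown to be $o(1)$, and the Fatou coordinates on each side of the channel must be matched coherently; this requires careful asymptotic analysis of $F_\varepsilon$ near the two fixed points $\approx\pm i\varepsilon$, which are precisely the two simple fixed points into which the parabolic fixed point $0$ bifurcates.
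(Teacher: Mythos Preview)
Your outline follows the standard Lavaurs--Douady--Shishikura strategy (perturbed Fatou coordinates $\Phi_\varepsilon,\Psi_\varepsilon$, the phase shift $\Phi_\varepsilon-\Psi_\varepsilon^{-1}=\pi/\varepsilon+o(1)$, and the resulting factorization of $f_{\varepsilon_n}^{\circ N_n}$), which is exactly the route taken in the references \cite{douady,shishikura}. That is the right approach, and you correctly identify the main technical burden: controlling the transit through a channel of diverging length $\pi/\varepsilon$.

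However, the paper you are comparing against explicitly \emph{does not} prove this proposition. Immediately after the statement the authors write: ``We do not present the proof of this result which is rather technical, and can be found in \cite{douady} or \cite{shishikura} for example.'' What the paper does instead is verify the convergence $(g+\varepsilon^2)^{\circ N}\to{\cal L}_g$ in the single explicit case $g(z)=z/(1-z)$, where $g+\varepsilon^2$ is itself a M\"obius transformation with fixed points $\alpha^\pm\approx\pm i\varepsilon$ and multipliers $\lambda^\pm=\exp(\pm 2i\varepsilon+O(\varepsilon^3))$; the $N$-th iterate is then written down in closed form and seen to converge to the identity (which is ${\cal L}_g$ in this degenerate case). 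The Julia-set inclusions are not addressed at all in the paper's treatment.

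So your proposal and the paper's ``proof'' are not comparable in scope: you sketch the genuine general argument, while the paper deliberately restricts to an illustrative toy computation. A couple of minor points on your sketch: the expansion $F_\varepsilon(Z)=Z+1+\varepsilon^2 Z^2+O(1)$ hides cross terms of order $\varepsilon^2 Z$ and $1/Z$ that matter in the uniform estimates, and your argument that ${\cal J}_f\subseteq{\cal J}({\cal L}_f)$ via ${\cal K}_f\setminus{\cal B}_f\subseteq{\cal K}({\cal L}_f)$ relies on a convention for ${\cal L}_f^{-n}$ outside ${\cal B}_f$ that you should make explicit, since ${\cal L}_f$ is only defined on ${\cal B}_f$.
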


We do not present the proof of this result which is rather technical, and can be found in \cite{douady} or \cite{shishikura} for example. Instead, we will show that the result holds for the M\"obius transformation 
\[g(z) = \frac{z}{1-z} = z+z^2 + {O}(z^3).\]
In that case, all maps involved are M\"obius transformations and the computations are explicit. 
If we perform the change of coordinates $Z=-1/z$, the M\"obius transformation $g$ gets conjugated to $T_1$. Thus,
\[\Phi_g(z) = -\frac{1}{z}\quad \text{and}\quad \Psi_g(Z) = -\frac{1}{Z},\quad \text{so that}\quad{\cal L}_g = {\rm Id}.\]
Note that $g+\varepsilon^2$ is also a M\"obius transformation. It has two fixed points 
\[\alpha^\pm= \pm i\varepsilon + {O}(\varepsilon^2)\quad \text{with multipliers}\quad  \lambda^\pm = \exp\bigl(\pm2 i\varepsilon + {O}(\varepsilon^3)\bigr).\]
So, if $N\to +\infty$ and 
\[N-\frac{\pi}{\varepsilon}\to 0,\quad\text{so that}\quad \varepsilon = \frac{\pi}{N + { o}(1)} = \frac{\pi}{N} +  { o}\left(\frac{1}{N^2}\right),\]
then $(g+\varepsilon^2)^{\circ N}$ is a M\"obius transformation fixing $\alpha^\pm$ with multipliers 
\[\mu^\pm:=(\lambda^\pm)^N = \exp\left(\pm2\pi i  + { o}(1/N)\right) = 1  + { o}\left(\frac{1}{N}\right).\]
This M\"obius transformation is 
\[z\mapsto \alpha^+ + \frac{\mu^+ \cdot (z-\alpha^+)}{\displaystyle 1-\frac{\mu^+-1}{\alpha^+-\alpha^-}(z-\alpha^+)}.\]
Since 
\[\mu^+-1 = { o}\left(\frac{1}{N}\right) = { o} (\alpha^+-\alpha^-),\]
we see that indeed, 
\[(g+\varepsilon^2)^{\circ N}\underset{N\to +\infty}\longrightarrow  {\cal L}_g.\]

Recently, in \cite{Vi2}, Vivas studied a non-autonomous parabolic implosion in dimension 1 for the map $f(z)=\frac{z}{1-z}$.

In higher dimension, (semi-)parabolic implosion was recently studied for dissipative polynomial automorphisms of $\mathbb{C}^2$ by Bedford, Smillie and Ueda in \cite{bsu} (see also \cite{dl}) and their strategy was recently adapted by Bianchi in \cite{bianchi} for a perturbation of a class of holomorphic endomorphisms tangent to identity, establishing a two-dimensional Lavaurs theorem for such a class.

\section{Fatou components for polynomial maps in dimension 2}
\label{sec:8}

We end these notes by giving an updated account of the recent results on Fatou components for polynomial skew-products in complex dimension two in a neighbourhood of a periodic fiber. We divide our discussion according to the different possible kinds of periodic fibers.

\subsection{Preliminaries}
\label{subsec:8.1}

Let $F:\mathbb{C}^2\to \mathbb{C}^2$ be a holomorphic endomorphism of $\mathbb{C}^2$, and consider the discrete holomorphic dynamical system given by the iteration of $F$. In the investigation of the global behaviour of such a system it is natural to give the generalize the definition of the {\em Fatou set of $F$} as the largest open set $\mathcal{F}(F)$ where the family of iterates $(F^{\circ n})_{n\in\mathbb{N}}$ of $F$ is  
normal. A connected component of the Fatou set is called a {\em Fatou component}.

We have seen in the previous sections that in complex dimension one, Fatou components of rational maps of degree at least $2$ on the Riemann sphere are well understood, thanks to Theorem \ref{classificationFatou} and Theorem \ref{thm:2}.

In complex dimension two, the understanding of Fatou components is far less complete. A considerable progress in the classification of periodic Fatou components has been achieved thanks to Bedford and Smillie \cite{BS1} \cite{BS2} \cite{BS3}, Forn\ae ss and Sibony \cite{FS}, Lyubich and Peters \cite{LP} and Ueda \cite{Ue}. 

The question of the existence of wandering (i.e., not pre-periodic) Fatou components in higher dimension was put forward by several authors since the 1990's (see e.g., \cite{FS2}). Higher dimensional \emph{transcendental} (i.e., non polynomial) maps with wandering domains can be constructed from one-dimensional examples by taking direct products.  An example of a  transcendental {\em biholomorphism} of $\mathbb{C}^2$ with  a wandering Fatou component oscillating to infinity was constructed by Forn\ae ss and   Sibony  in \cite{FS1}. Nonetheless, until recently very little was known about the existence of wandering Fatou components for holomorphic endomorphisms of $\mathbb{P}^2(\mathbb{C})$ or for polynomial endomorphisms of $\mathbb{C}^2$.

A first natural class of polynomial endomorphisms of $\mathbb{C}^2$ to consider are {\em direct product}, that is maps $F:  \mathbb{C}^2\to \mathbb{C}^2$ of the form
$$
F(z,w) = ( f(z), g(w)),
$$ 
where $f$ and $g$ are complex polynomials in one variable. This allows us to recover the generalizations of one-dimensional dynamical behaviours in dimension two. However, direct products are a very particular class and they do not give us a complete understanding of all possible behaviours of polynomial endomorphisms in~$\mathbb{C}^2$. 

A more interesting class to consider is given by polynomial {\em skew-products} in $\mathbb{C}^2$, namely polynomial maps $F: \mathbb{C}^2\to\mathbb{C}^2$ of the form
\begin{equation}\label{eq:1}
F(z,w) 
=
(f(z,w), g(w)),
\end{equation}
where $g$ is a complex polynomial in one variable and $f$ is a complex polynomial in two variables.
Since they leave invariant the fibration $\{w =\hbox{const.}\}$, skew-products allow us to {\em build on one-dimensional dynamics} and to get a first flavour of the richness of the higher dimension setting we are working in. This idea has been used by several authors to construct maps with particular dynamical properties. Dujardin, for example, used in \cite{D} specific skew-products to construct a non-laminar Green current. Boc-Thaler, Forn\ae ss and Peters constructed in \cite{BFP} a map having a Fatou component with a punctured limit set. Last but not least, as we will explain in subsection \ref{subsec:8.2}, skew-products are one of the key ingredients in the construction we recently obtained  in \cite{ABDPR} with Astorg, Dujardin, and Peters of holomorphic endomorphisms of $\mathbb{P}^2(\mathbb{C})$ having a wandering Fatou component.

The investigation of the holomorphic dynamics of polynomial skew-products was started by Heinemann \cite{He} and then continued by Jonsson \cite{J}. The topology of Fatou components of skew-products has been studied by Roeder in \cite{Ro}.

\smallskip
Given a Fatou component $\Omega$ of a polynomial skew-product $F$ in $\mathbb{C}^2$, its projection on the second coordinate $\Omega_2=\pi_2(\Omega)$ is a Fatou components for $g$ and hence thanks to Sullivan's non-wandering Theorem~\ref{thm:2}, up to considering an iterate of $F$, it has to fall into one of the three cases given by Theorem~\ref{classificationFatou}, and moreover, since we are considering polynomials, Herman rings cannot occur.
Therefore, since (pre-)periodic points for $g$ correspond to (pre-)periodic fibers for $F$, up to considering an iterate of $F$, we can restrict ourselves to study what happens in neighbourhoods of invariant fibers of the form $\{w=w_0\}$. 
One-dimensional theory also describes the dynamics on the invariant fiber, which is given by the action of the one-dimensional polynomial $f(z,w_0) := f_{w_0}(z)$, and hence the Fatou components of $f_c$ will be again all pre-periodic and, up to consider an iterate, we can assume that they are either attracting basins, or parabolic basins or Siegel disks. 
This structure leads us to two immediate questions.

\begin{enumerate}
\item  Do all Fatou components of $f_c$ bulge to two-dimensional Fatou components of $F$?
\item Is it possible to have wandering Fatou components for $F$ in a neighbourhood of an invariant fiber?
\end{enumerate}

In the following we shall call an invariant fiber $\{w=w_0\}$ {\em attracting, parabolic} or {\em elliptic} according to whether $w_0$ is an attracting, parabolic or elliptic fixed point for $g$. A {\em bulging} Fatou component will be a Fatou component $\Omega$ of $F$ such that $\Omega\cap\{w=w_0\}$ is a one-dimensional Fatou component of $f_{w_0}$ on the invariant fiber $\{w=w_0\}$. We shall say that a Fatou component $\Omega_{w_0}$ of $f_{w_0}$ on the invariant fiber $\{w=w_0\}$ {\em is bulging} if there exists a bulging Fatou component $\Omega$ of $F$ so that $\Omega_{w_0}=\Omega\cap\{w=w_0\}$.

\subsection{Attracting invariant fiber}
\label{subsec:8.2}

Let us consider a polynomial skew-product $F: \mathbb{C}^2\to \mathbb{C}^2$ of degree $d\ge 2$
$$
F(z,w) 
=
(f(z,w), g(w)),
$$
with an {\em attracting invariant fiber}. We can assume without loss of generality that the invariant fiber is $\{w=0\}$. Therefore we have $g(0)=0$ and $|g'(0)|<1$. In this case we know %it is a well-known one-dimensional result (see for exemple \cite{CarlesonGamelin} or \cite{Milnor1}) 
that there exists an {\em attracting basin}, containing the origin, of points whose iterates converge to the origin. The rate of convergence to the fixed point depends on whether the fixed point is {\em superattracting}, i.e, $g'(0) = 0$, or {\em attracting} or {\em geometrically attracting}, i.e., $g'(0) \ne 0$. 

%%%%%
\subsubsection{Superattracting case}
%%%%%

This setting was studied by Lilov in \cite{Li} who was able to answer both questions stated in the introduction. He first proved the following result giving a positive answer to Question 1.

\begin{theorem}[Lilov \cite{Li}]
Let $F: \mathbb{C}^2\to\mathbb{C}^2$ be a polynomial skew-product of the form \eqref{eq:1} of degree $d\ge 2$. Let $\{w=w_0\}$ be a superattracting invariant fiber for $F$. Then all one-dimensional Fatou components of $f_{w_0}$ bulge to Fatou components of $F$. 
\label{thm:3bis}
\end{theorem}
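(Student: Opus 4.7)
The plan is to prove that for every compact $K\subset U_0$ (with $U_0$ a Fatou component of $f_{w_0}$; without loss of generality $w_0=0$ and $f_0:=f(\cdot,0)$) there exists $r>0$ such that $K\times D(0,r)\subset\mathcal{F}(F)$. The Fatou component of $F$ containing $K\times\{0\}$ then realizes the bulging of $U_0$ in the invariant fiber. By Montel's theorem in $\mathbb{C}^2$ (bounded holomorphic families are normal), it is enough to produce a uniform bound on the family $(F^n)$ on $K\times D(0,r)$; writing $F^n(z,w)=(\tilde z_n,g^n(w))$, the second coordinate stays in $D(0,r)$ as soon as $r$ is chosen small, since $0$ is a superattracting fixed point of $g$. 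The entire task is therefore to bound $\tilde z_n$ uniformly in $n$.

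I would first reduce, by iterating $f_0$ finitely many times and using complete invariance of the Fatou set of $F$, to the case where $U_0$ is invariant under $f_0$, hence (by Theorem~\ref{classificationFatou}) one of: an attracting basin, a superattracting basin, a parabolic basin, or a Siegel disk. The decisive quantitative feature of the superattracting hypothesis is the double-exponential decay $|g^n(w)|\leq C|w|^{\ell^n}$ for $|w|\leq r$, where $\ell\geq 2$ is the order of vanishing of $g$ at $0$; this will eventually dominate any geometric growth coming from the horizontal derivatives.

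The core estimate compares the fibred orbit $\tilde z_n=f_{w_{n-1}}\circ\cdots\circ f_{w_0}(z)$ with $z_n:=f_0^n(z)$ via the shadowing inequality
\[
|\tilde z_{n+1}-z_{n+1}|\;\leq\;\sup|\partial_z f|\cdot|\tilde z_n-z_n|+C\,|w_n|,
\]
which after iteration controls the error by a sum of the form $\sum_k C\,|w|^{\ell^k}$ weighted by derivative factors of $f$ along the orbit. In the attracting, superattracting, and Siegel cases, I would choose $K$ to lie inside a forward-invariant relatively compact open set $V\subset U_0$ (a small disk around the attracting fixed point of $f_0$, or a strictly invariant sub-disk in a linearizing coordinate on a Siegel disk) in which the derivative factors are uniformly bounded; the double-exponential decay of $|w_n|$ then keeps the shadowing error arbitrarily small and forces $\tilde z_n$ to stay in $V$. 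A general compact $K\subset U_0$ is pushed into $V$ after finitely many iterations of $f_0$, and one concludes by pulling back under $F^N$.

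The main obstacle is the parabolic case: the attracting petal $P\subset U_0$ is \emph{not} relatively compact in $U_0$, because its boundary contains the parabolic fixed point $\alpha\in\partial U_0\subset\mathcal{J}_{f_0}$, and derivatives of $f_0^n$ along a parabolic orbit grow polynomially in $n$ rather than staying bounded. To handle this I would perform the shadowing analysis in Fatou coordinates on the fiber, in which $f_0$ is conjugated to the translation $Z\mapsto Z+1$; the polynomial growth coming from the derivative of the coordinate change is then crushed by the double-exponential decay of $|w_n|$. Equivalently, $(\alpha,0)$ is a semi-parabolic fixed point of $F$ with Jacobian eigenvalues $1$ and $0$, and the remaining task reduces to showing that $K\times D(0,r)$ lies in the semi-parabolic basin of $(\alpha,0)$ — and it is precisely the \emph{superattracting} (rather than merely attracting) nature of the invariant fiber which makes this last step succeed.
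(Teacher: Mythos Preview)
Your approach is correct but genuinely different from the paper's. Both reduce to an invariant component $U_0$ and split by type, but for the parabolic and Siegel cases the paper invokes a local result of Lilov (his Theorem~3.17) producing a \emph{strong stable manifold} through every point of the one-dimensional component, and exhibits the bulging component as the union of these manifolds; the attracting case is handled by citing Rosay--Rudin. You instead run a direct shadowing estimate, comparing the non-autonomous orbit $\tilde z_n$ with $f_0^{\circ n}(z)$ and passing to linearizing (resp.\ Fatou) coordinates in the Siegel (resp.\ parabolic) case so that the fibre dynamics becomes an isometry. Your route is more elementary and self-contained (no external stable-manifold theorem); the paper's is more geometric and immediately gives a foliated description of the bulging component.

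One correction to your final sentence: the superattracting hypothesis is \emph{not} what makes the parabolic step succeed. Your own estimate needs only $\sum_n |\Phi_f'(z_n)|\cdot|w_n|<\infty$, and since $|\Phi_f'(z_n)|$ grows only polynomially (like $n^2$) along a parabolic orbit, geometric decay $|w_n|\le C|\lambda|^n$ with $|\lambda|<1$ already suffices. The paper confirms this: Theorem~\ref{thm:4bis} records the bulging result for any attracting invariant fiber, with essentially the same proof. The genuine superattracting/attracting dichotomy appears not here but in the subsequent non-wandering result, Theorem~\ref{thm:3}, whose proof does break down in the geometrically attracting case.
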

  
\begin{proof}[Idea of the proof]
We can assume $w_0=0$ without loss of generality. Thanks to Theorem~\ref{classificationFatou} and Theorem~\ref{thm:2}, all Fatou components of the restriction $f_0(z) = f(z,0)$ of $f(z,w)$ to the invariant fiber are  (pre-)periodic and are either attracting basins, or parabolic basins or Siegel domains. The strategy of the proof is to  prove separately for each of these cases that the corresponding component is contained in a two-dimensional Fatou component of $F$. 
The bulging of one-dimensional Fatou components of attracting periodic points of $f_0(z)$ is well-known and follows for instance from the results of Rosay and Rudin \cite{RR}.
For the remaining cases, by \cite[Theorem 3.17]{Li} there exists a strong stable manifold through all point in the one-dimensional Fatou components of parabolic or elliptic periodic points of $f_0(z)$, and so the corresponding bulging Fatou components simply consist of the union of such manifolds. 
\end{proof}

Then Lilov proved the following result implying the non-existence of wandering Fatou components in a neighbourhood of a superattracting invariant fiber.

\begin{theorem}[Lilov \cite{Li}]
Let $F: \mathbb{C}^2\to\mathbb{C}^2$ be a polynomial skew-product of the form \eqref{eq:1} of degree $d\ge 2$. Let $\{w=w_0\}$ be a superattracting invariant fiber for $F$ and let $\mathcal{B}$ be the immediate basin of the superattracting fixed point $w_0$. Take $w\in\mathcal{B}$ and let $D_{w}$ be a one-dimensional open disk lying in the fiber over $w$. Then the forward orbit of $D_w$ must intersect one of the bulging Fatou components of $f_{w_0}$.\label{thm:3}
\end{theorem}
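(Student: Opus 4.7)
The plan is a contradiction argument exploiting the super-fast convergence of the fibers to the central one in the superattracting case. Normalize so that $w_0=0$. Since $g(0)=0=g'(0)$, we can write $g(w)=cw^m+O(w^{m+1})$ for some integer $m\ge 2$; hence $w_n:=g^{\circ n}(w)$ satisfies $|w_n|\le Ar^{m^n}$ for some $r\in(0,1)$, $A>0$, once $w$ is close enough to $0$ (which we may assume by replacing $w$ with a sufficiently late iterate). Write $D_n:=F^{\circ n}(D_w)\subset\{w=w_n\}$ and let the \emph{horizontal shadow} be $\tilde D_n:=\{z\in\mathbb{C}:(z,w_n)\in D_n\}$; with $f_w(z):=f(z,w)$, one has $\tilde D_{n+k}=(f_{w_{n+k-1}}\circ\cdots\circ f_{w_n})(\tilde D_n)$. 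Assume for contradiction that the forward orbit of $D_w$ avoids every bulging Fatou component of~$F$.

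The central technical input is a \emph{long-window perturbative comparison}: for $n_0$ large and $H_k:=f_{w_{n_0+k-1}}\circ\cdots\circ f_{w_{n_0}}$, a Gronwall-type induction based on the estimate $f_{w_j}=f_0+O(|w_j|)$ on bounded regions yields
\[
\|H_k-f_0^{\circ k}\|_{K}\;\le\;CL^k|w_{n_0}|
\]
on any fixed compact $K$, where $L$ is a Lipschitz bound for $f_0$ on a slightly larger compact. Thus, for any prescribed $\epsilon>0$, the approximation $\|H_k-f_0^{\circ k}\|_K\le \epsilon$ holds for all $k$ up to a window of length $K(n_0)=\Theta(\log(1/|w_{n_0}|))=\Theta(m^{n_0})$, which grows \emph{super-exponentially} in $n_0$. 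This tells us that, at height $w_{n_0}$ very close to $0$, we can iterate $F$ along nearby fibers for a huge number of steps while remaining $\epsilon$-close to the central-fiber dynamics of $f_0$.

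One-dimensional spreading then yields the contradiction. Pick $n_0$ large. If $\tilde D_{n_0}\subset\mathcal{F}_{f_0}$, then $\tilde D_{n_0}$ lies in a single preperiodic Fatou component $\Omega_0$ of $f_0$, and iterating $f_0$ a further $k$ steps confines a sub-disk compactly inside an (super)attracting, parabolic, or Siegel component (using the local dynamics inside such components). Otherwise $\tilde D_{n_0}$ meets $\mathcal{J}_{f_0}$, and the family $(f_0^{\circ k}|_{\tilde D_{n_0}})$ is not normal; Proposition~\ref{invimdense} combined with Montel's theorem gives that $f_0^{\circ k}(\tilde D_{n_0})$ covers $\widehat{\mathbb{C}}\setminus E_{f_0}$ for some $k$, so it contains a closed disk $\overline{B(z_\star,\rho)}$ compactly inside a chosen Fatou component $\Omega_0$. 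In either case, by Theorem~\ref{thm:3bis} the component $\Omega_0$ bulges to an open component $\hat\Omega_0$ of $F$ containing $(z_\star,0)$, so there exists $\delta>0$ with $\overline{B(z_\star,\rho/2)}\times\{|w|<\delta\}\subset\hat\Omega_0$. Choosing $n_0$ so large that $K(n_0)>k$, that the approximation error satisfies $\epsilon<\rho/2$, and that $|w_{n_0+k}|<\delta$—all simultaneously achievable thanks to the super-exponential growth of $K(n_0)$ and super-fast decay of $|w_n|$—the comparison gives $\tilde D_{n_0+k}\supset B(z_\star,\rho/2)$, whence $D_{n_0+k}\cap\hat\Omega_0\neq\emptyset$, contradicting the assumption.

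The main obstacle is the bookkeeping between the number $k$ of one-dimensional iterates required to spread $\tilde D_{n_0}$ into a bulging component and the length $K(n_0)$ of the window in which the perturbative comparison is valid. The shadow $\tilde D_{n_0}$ may shrink or become distorted as $n_0$ grows, which could force $k=k(n_0)$ to be large; the key point is that the super-exponential gap $K(n_0)\sim m^{n_0}$ coming precisely from the superattracting hypothesis $g'(0)=0$ comfortably absorbs any polynomial (indeed, any moderately exponential) growth of $k(n_0)$. It is exactly this super-fast decay that allows one-dimensional non-normality to be transferred to the two-dimensional setting, and it is also why the argument breaks down, and wandering domains may appear, in the merely (geometrically) attracting case.
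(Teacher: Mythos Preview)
Your overall strategy---use the super-exponential decay $|w_n|\lesssim r^{m^n}$ to open a long comparison window with $f_0$, then invoke one-dimensional spreading---is natural, but the argument has a genuine gap exactly at the point you flag as ``the main obstacle''. You assert that the window length $K(n_0)\sim m^{n_0}$ ``comfortably absorbs any polynomial (indeed, any moderately exponential) growth of $k(n_0)$'', yet you never establish \emph{any} bound on $k(n_0)$. The integer $k(n_0)$ depends on the size of the shadow $\tilde D_{n_0}$: if $\tilde D_{n_0}$ has become extremely small, then even when it meets $\mathcal{J}_{f_0}$, the Montel/blow-up argument may require a huge number of $f_0$-iterates before the image covers a fixed ball, and for non-hyperbolic $f_0$ there is no general estimate of the form $k\lesssim\log(1/\mathrm{diam}\,\tilde D_{n_0})$. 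Since $\tilde D_{n_0}$ is obtained from $\tilde D_0$ by composing $n_0$ degree-$d$ polynomials (possibly through critical points), you have no a~priori lower bound on its inradius, so $k(n_0)$ could outpace $K(n_0)$. The same circularity appears in your Fatou-side case: the $\delta>0$ such that $\overline{B(z_\star,\rho/2)}\times\{|w|<\delta\}\subset\hat\Omega_0$ depends on how far $\tilde D_{n_0}$ sits from $\partial\Omega_0$, which again you do not control as $n_0$ grows.

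This is precisely the difficulty that Lilov's proof (as sketched in the paper) confronts head-on rather than sidestepping. Lilov's Lemma~3.2.4 gives a quantitative \emph{lower bound on the inradius} of the successive images $\tilde D_n$; his Proposition~3.2.8 gives a quantitative \emph{upper bound} on the radius of any fiber-disk at height $w_n$ lying in the complement of the bulging components. The contradiction comes from comparing these two explicit estimates. In effect, Lilov supplies exactly the missing ingredient in your argument---control of how fast $\tilde D_n$ can shrink---and this is where the superattracting hypothesis is actually used. Your Gronwall comparison, while correct on its domain of validity, is not by itself enough: without a disk-size lower bound you cannot guarantee $k(n_0)\le K(n_0)$, and the proof does not close.
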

    
The proof relies on the repeated use of \cite[Lemma 3.2.4]{Li} applied to the orbit of a disk lying in a fiber over a point in the attracting basin, in order to obtain estimates from below for the radii of the images. 
Thanks to \cite[Proposition 3.2.8]{Li}, by studying the geometry of the bulging Fatou components, it is also possible to obtain an upper bound on the largest possible disk lying in a fiber over a point in the attracting basin that can lie in the complement of a bulging Fatou component, depending on the distance to the invariant fiber.
The conclusion then follows combining these two estimates.

All bulging Fatou components are (pre-)periodic, therefore all Fatou components for $F$ in a neighbourhood of a superattracting invariant fiber are (pre-)periodic, and then the non-existence of wandering Fatou components in a neighbourhood of a superattracting invariant fiber follows immediately.

\begin{corollary}[Lilov \cite{Li}]
Let $F: \mathbb{C}^2\to\mathbb{C}^2$ be a polynomial skew-product of the form \eqref{eq:1} of degree $d\ge 2$. Let $\{w=w_0\}$ be a superattracting invariant fiber for $F$ and let $\mathcal{B}$ be the immediate basin of the superattracting fixed point $w_0$. Then there are no wandering Fatou components in $\mathcal{B}\times\mathbb{C}$.
\end{corollary}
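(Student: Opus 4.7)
The plan is to argue by contradiction: suppose $\Omega$ is a wandering Fatou component of $F$ contained in $\mathbb{C}\times\mathcal{B}$ and derive a contradiction using Theorem~\ref{thm:3} together with the one-dimensional Sullivan non-wandering theorem applied on the invariant fiber.

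First, I would pick any point $(z_0,w) \in \Omega$ with $w \in \mathcal{B}$. Since $\Omega$ is open, it contains a small bidisk around $(z_0,w)$; in particular the horizontal disk $D_w := \{(\zeta,w) : |\zeta-z_0|<r\}$ lies in $\Omega$ for some $r>0$. Since $\Omega$ is a Fatou component of $F$, every forward iterate $F^{\circ n}(D_w)$ also lies in a Fatou component of $F$, namely the component containing $F^{\circ n}(\Omega)$. Note that $F^{\circ n}(D_w)$ is a horizontal disk in the fiber over $g^{\circ n}(w) \in \mathcal{B}$, so Theorem~\ref{thm:3} applies directly to this disk: there exists $n_0\ge 0$ such that $F^{\circ n_0}(D_w)$ intersects a bulging Fatou component $\widetilde{U}$ of $f_{w_0}$.

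The next step is to use that $\widetilde{U}$ is (pre-)periodic. By definition, $\widetilde{U}$ is the bulging of some Fatou component $U$ of the one-dimensional restriction $f_{w_0}$ on the invariant fiber $\{w=w_0\}$. By Sullivan's Theorem~\ref{thm:2} applied to $f_{w_0}$, the component $U$ is pre-periodic under $f_{w_0}$, i.e., $f_{w_0}^{\circ (m+p)}(U) = f_{w_0}^{\circ m}(U)$ for some $m\ge 0$ and $p\ge 1$. Since $\widetilde{U}$ is the unique Fatou component of $F$ containing $U$, the invariance of the Fatou set under $F$ (Proposition~\ref{invariance}) and the fact that $F$ preserves the fiber $\{w=w_0\}$ together imply that $F^{\circ (m+p)}(\widetilde{U})$ and $F^{\circ m}(\widetilde{U})$ are both Fatou components of $F$ containing $f_{w_0}^{\circ m}(U)$, hence they must coincide. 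In particular, $\widetilde{U}$ is pre-periodic as a two-dimensional Fatou component.

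Finally, since $F^{\circ n_0}(\Omega)$ is the Fatou component of $F$ containing $F^{\circ n_0}(D_w)$, and since that set meets $\widetilde{U}$, we must have $F^{\circ n_0}(\Omega) = \widetilde{U}$. As $\widetilde{U}$ is pre-periodic, so is $\Omega$, contradicting the wandering assumption. I expect the proof to go through cleanly once these pieces are assembled; the only subtlety to check carefully is that Theorem~\ref{thm:3} is indeed stated for arbitrary horizontal disks over any $w \in \mathcal{B}$ (not just over points close to $w_0$), which is the case in the statement above, so no additional work is required. Thus the main ``content'' of the corollary is entirely concentrated in Theorem~\ref{thm:3}, and the corollary itself is a formal deduction.
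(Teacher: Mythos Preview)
Your argument is correct and is exactly the deduction the paper has in mind: the sentence preceding the corollary (``All bulging Fatou components are (pre-)periodic, therefore all Fatou components for $F$ \ldots\ are (pre-)periodic'') is precisely your contradiction argument, and you have simply spelled out the details. One small cosmetic point: in dimension two it is not asserted that $F^{\circ m}(\widetilde U)$ is itself a full Fatou component, so you should phrase step two as ``the Fatou component \emph{containing} $F^{\circ m}(\widetilde U)$''; the rest of the reasoning goes through unchanged.
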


%%%%%
\subsubsection{Geometrically attracting case}
%%%%%

The geometrically attracting case was first partially addressed by Lilov in \cite{Li} even if not stated explicitly. In fact, the proof of Theorem~\ref{thm:3bis} can be readily adjusted to this case obtaining the following statement answering Question 1.

\begin{theorem}[Lilov \cite{Li}]
Let $F: \mathbb{C}^2\to\mathbb{C}^2$ be a polynomial skew-product of the form \eqref{eq:1} of degree $d\ge 2$. Let $\{w=w_0\}$ be an attracting invariant fiber for $F$. Then all one-dimensional Fatou components of $f_{w_0}$ bulge to Fatou components of $F$. \label{thm:4bis}
\end{theorem}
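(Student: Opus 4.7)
My plan is to normalize so $w_0 = 0$ (hence $g(0) = 0$ and $g'(0) = 0$), use Sullivan's theorem to reduce to three canonical component types, and exploit the super-exponential contraction in $w$ (the hallmark of the superattracting hypothesis) to bulge each type. Precisely, iterates satisfy $|g^{\circ n}(w)| \le C|w|^{k^n}$ for $k \ge 2$ equal to the local degree of $g$ at $0$, so a neighbourhood of the fiber contracts onto the fiber at a rate that beats every exponential. Writing $f_0(z) := f(z, 0)$, Theorems \ref{classificationFatou} and \ref{thm:2} guarantee that every Fatou component of $f_0$ is pre-periodic and eventually lands on an invariant component (for some iterate $f_0^{\circ k}$) which is a (super)attracting basin, a parabolic basin, or a Siegel disk. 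Since $F^{-1}(\mathcal{F}(F)) = \mathcal{F}(F)$, the bulging property passes to preimages (take the connected component of $F^{-m}(\Omega')$ containing $\Omega_0 \times \{0\}$, and use maximality of $\Omega_0$ as a Fatou component of $f_0$ to identify the trace), so after replacing $F$ by an iterate it suffices to treat an invariant $\Omega_0$ of each of the three types.

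First I would dispatch the (super)attracting case. If $p \in \Omega_0$ is the attracting fixed point of $f_0$ with multiplier $\lambda$ satisfying $|\lambda| < 1$, then $DF_{(p, 0)}$ is upper triangular with diagonal $(\lambda, 0)$, so $(p, 0)$ is a two-dimensional (super)attracting fixed point of $F$. By Rosay--Rudin \cite{RR} its basin $\Omega := \{(z, w) : F^{\circ n}(z, w) \to (p, 0)\}$ is an open, connected, forward-invariant subset of the Fatou set of $F$. Because $F$ preserves $\{w = 0\}$ with $F|_{\{w=0\}} = f_0$, the trace $\Omega \cap \{w = 0\}$ coincides with the set of $z$ for which $f_0^{\circ n}(z) \to p$, which is precisely $\Omega_0$.

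For the parabolic and Siegel cases the dynamics on $\Omega_0$ is not contracting, so the previous argument fails. Instead I would build, through every $(z, 0)$ with $z \in \Omega_0$, a one-dimensional holomorphic strong stable manifold $W^{ss}(z, 0)$ transverse to the fiber, consisting of points whose $F$-orbit shadows the orbit of $(z, 0)$ at the super-exponential rate dictated by $g$. The existence of such manifolds is the content of \cite[Theorem 3.17]{Li}; the mechanism is a graph-transform argument in which the splitting between the $w$-direction (contracted by $O(|w|)$ near the fiber, hence arbitrarily strongly) and the $z$-direction (an isometry in a Siegel coordinate on $\Omega_0$, respectively polynomially slow in a Fatou coordinate near the parabolic point on $\partial \Omega_0$) is dominated with an arbitrarily large spectral gap. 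Setting
\[
\Omega := \bigcup_{z \in \Omega_0} W^{ss}(z, 0),
\]
the set $\Omega$ is open and forward-invariant, and on compact subsets of $\Omega$ the iterates $F^{\circ n}$ stay uniformly close to $(z, w) \mapsto (f_0^{\circ n}(z), 0)$; since the latter family is normal on $\Omega_0$, so is $(F^{\circ n})$ on $\Omega$. Hence $\Omega$ lies in a single Fatou component of $F$, whose trace on $\{w = 0\}$ is $\Omega_0$.

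The hard part is the construction of the strong stable manifolds near a parabolic fixed point $p \in \partial \Omega_0$, and throughout a Siegel disk, because neither configuration is uniformly hyperbolic in $\mathbb{C}^2$. The saving grace is precisely the superattracting hypothesis $g'(0) = 0$: the $w$-contraction is so strong that it dominates any $z$-behaviour and yields an arbitrarily large dominated splitting, which is what makes the graph-transform scheme converge. Granted this local input from \cite{Li}, the identification $\Omega \cap \{w = 0\} = \Omega_0$ and the verification of normality on $\Omega$ are routine.
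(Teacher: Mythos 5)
You have misread the hypothesis. The theorem you are asked to prove concerns an \emph{attracting} invariant fiber, i.e.\ $|g'(w_0)|<1$, and the subsection from which it is drawn ("Geometrically attracting case") makes clear the intended scope is $g'(w_0)\neq 0$; nevertheless you normalize so that "$g(0)=0$ \emph{and} $g'(0)=0$," and from there on you rely explicitly on the superattracting bound $|g^{\circ n}(w)|\le C|w|^{k^n}$ and, by your own wording, on "the superattracting hypothesis $g'(0)=0$" as "the saving grace." In effect you have re-proved Theorem~\ref{thm:3bis}, not Theorem~\ref{thm:4bis}, and the single nontrivial adjustment the statement actually requires is precisely the one you skipped: showing that when $0<|g'(0)|=|\lambda|<1$ the contraction in $w$ is merely geometric, $|g^{\circ n}(w)|\lesssim |\lambda|^n|w|$, and that this weaker, uniformly exponential contraction is still enough to run the graph-transform/strong-stable-manifold construction from \cite[Theorem~3.17]{Li}.

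The good news is that the remedy is short and your overall architecture matches the paper's, which simply asserts that Lilov's superattracting argument "can be readily adjusted." The attracting-basin case is unaffected: the fixed point $(p,0)$ has $DF$ triangular with diagonal entries $(\mu,\lambda)$, both of modulus $<1$, so Rosay--Rudin still applies. For the parabolic and Siegel cases you should replace the appeal to super-exponential contraction by the observation that in a linearizing coordinate along the fiber the $z$-dynamics is an isometry (Siegel) or sub-exponential (parabolic, after passing to a Fatou coordinate), while the $w$-dynamics contracts at the fixed exponential rate $|\lambda|<1$; this gives a dominated splitting with a definite (though not arbitrarily large) spectral gap, which is all the graph transform needs. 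Once that is said, your identification of the bulging component as the union of the strong stable manifolds, and the verification that the family $(F^{\circ n})$ is normal there because it shadows $(f_0^{\circ n},0)$, go through unchanged.
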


On the other hand, the proof of Theorem~\ref{thm:3} cannot be generalized to this setting, which is indeed more complicated than the superattracting case. 
In fact, Theorem~\ref{thm:3} does not hold in general, as showed by Peters and Vivas with the following result.

\begin{theorem}[Peters-Vivas \cite{PV}]
Let $F: \mathbb{C}^2\to\mathbb{C}^2$ be a polynomial skew-product of the form
\begin{equation}
\label{eq:2}
F(z,w) = (p(z) + q(w) , \lambda w),
\end{equation}
with $0<|\lambda|<1$ and $p$ and $q$ complex polynomials. Then there exists a triple $(\lambda, p, q)$ and a holomorphic disk $D\subset\{w=w_0\}$ whose forward orbit accumulates at a point $(z_0, 0)$, where $z_0$ is a repelling fixed point in the Julia set of $f_0$. 
\label{thm:5}
\end{theorem}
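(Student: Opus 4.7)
The plan is to exploit that iterates of $F$ take the form
\[F^n(z,w_0) = (P_n(z), \lambda^n w_0),\]
where $P_n := p_{n-1} \circ \cdots \circ p_0$ is the composition of the perturbed one-variable polynomials $p_k(\zeta) := p(\zeta) + q(\lambda^k w_0)$. Choosing $q$ with $q(0)=0$, the perturbations $\epsilon_k := q(\lambda^k w_0)$ decay geometrically, so the dynamics in the first coordinate is a slowly-perturbed, non-autonomous version of iterating $p$. The idea is to use these small but nonzero perturbations to steer, for each large $N$, a carefully chosen point of a single fiber-disk so that its image at time $N$ lands exactly on the repelling fixed point $z_0$; the $w$-coordinate then takes care of itself since $\lambda^N w_0 \to 0$.

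Concretely, I would pick any polynomial $p$ of degree at least $2$ having a repelling fixed point $z_0$ with multiplier $\mu$, $|\mu|>1$. The local inverse branch $g$ of $p$ fixing $z_0$ is a contraction on some disk $V$ around $z_0$. For each integer $N\geq 1$, define backwards
\[
z_N^{(N)} := z_0, \qquad z_k^{(N)} := g\bigl(z_{k+1}^{(N)} - \epsilon_k\bigr) \quad \text{for } k = N-1,\ldots,0,
\]
so that by construction $p_k\bigl(z_k^{(N)}\bigr) = z_{k+1}^{(N)}$ and therefore $P_N\bigl(z_0^{(N)}\bigr) = z_0$, giving
\[F^N\bigl(z_0^{(N)}, w_0\bigr) = (z_0, \lambda^N w_0) \longrightarrow (z_0, 0).\]

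A telescoping estimate, using the contraction factor $L < 1$ of $g$ on $V$ and the bound $|\epsilon_k| \leq C|\lambda|^k|w_0|$, gives $|z_0^{(N)} - z_0| \leq C'|w_0|$ uniformly in $N$. Consequently, for $|w_0|$ small enough, all $z_0^{(N)}$ lie in a single fixed disk $D_z \subset V$ around $z_0$. Setting $D := D_z \times \{w_0\}$ produces a holomorphic disk in $\{w=w_0\}$ whose forward orbit contains the sequence $\bigl((z_0, \lambda^N w_0)\bigr)_{N\geq 1}$, and hence accumulates at $(z_0, 0)$, as claimed.

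The main technical obstacle is the uniform quantitative bookkeeping in the backward construction: one has to verify that every intermediate iterate $z_{k+1}^{(N)} - \epsilon_k$ remains inside $V$ for all $0 \leq k < N$ and all $N$ (so that the iteration of $g$ is actually well-defined), and that the nonlinear correction to $g$ near $z_0$ does not spoil the geometric-series estimate. Both issues are handled by taking $|w_0|$ sufficiently small relative to the linearization radius of $p$ at $z_0$; once this uniform control is in place, the rest of the argument is essentially formal, and the construction succeeds for essentially any triple $(\lambda,p,q)$ with $p$ having a repelling fixed point, $0 < |\lambda| < 1$, and $q(0) = 0$.
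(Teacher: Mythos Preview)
Your construction shows that the set $\bigcup_{n\geq 0} F^{\circ n}(D)$ has $(z_0,0)$ in its closure, but this is too weak a reading of the statement and, as you yourself notice, would then hold for essentially any triple. The remarks immediately following the theorem in the paper make the intended meaning clear: the disk $D$ is supposed to be a \emph{Fatou disk}, i.e., the family $(F^{\circ n}|_D)_{n\in\mathbb{N}}$ is normal and in fact $F^{\circ n}|_D \to (z_0,0)$ uniformly. Only then does the stated consequence ``the forward orbits of $D$ cannot intersect the bulging Fatou components of $f_0$'' follow, and only then does the result serve its purpose of showing that Lilov's Theorem~\ref{thm:3} fails in the geometrically attracting case.

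Your disk does not have this property: it is a small disk around the \emph{repelling} fixed point $z_0$, so the non-autonomous first-coordinate maps $P_n$ are expanding on $D_z$ (the derivative of $P_n$ at the center is of order $\mu^n$). Hence $F^{\circ n}(D)$ grows without bound in the $z$-direction, generic points of $D$ are pushed far from $z_0$ and eventually into bulging Fatou components or to infinity, and $(F^{\circ n}|_D)$ is certainly not normal. Your points $z_0^{(N)}$ are carefully chosen preimages, but each $N$ uses a different point of $D$, and nothing controls the orbit of a \emph{fixed} point of $D$ for all $n$.

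The paper does not give a proof here; it simply cites \cite{PV}. The actual Peters--Vivas construction is genuinely delicate and hinges on a specific choice of $(\lambda, p, q)$, arranged so that in suitable coordinates near $z_0$ the expansion by $p$ is exactly compensated by the decaying perturbations $q(\lambda^k w_0)$ on a whole disk, not just along a single orbit. This is precisely why the theorem asserts only the existence of \emph{some} triple $(\lambda,p,q)$, rather than a conclusion valid for arbitrary triples.
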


As a consequence, the forward orbits of $D$ cannot intersect the bulging Fatou components of $f_0$. 

The family $(F|_D^{\circ n})_{n\in\mathbb{N}}$ is normal on the disks $D$, and so these are Fatou disks. However such disks are completely contained in the Julia set of $F$, which is the complement in $\mathbb{C}^2$ of the Fatou set (see \cite[Theorem 6.1]{PV}).

The geometrically attracting case has been further investigated by Peters and Smit in \cite{PS}. 
They focused their investigation on polynomial skew-products such that the action on the invariant attracting fiber is {\em subhyperbolic}, that is the polynomial does not have parabolic periodic points and all critical points lying on the Julia set are pre-periodic. They proved the following result.

\begin{proposition}[Peters-Smit \cite{PS}]
Let $F: \mathbb{C}^2\to\mathbb{C}^2$ be a polynomial skew-product of the form \eqref{eq:1}. Assume that the origin is an attracting, not superattracting, fixed point for $g$ with corresponding basin $B_g$, and the polynomial $f_0(z) := f(z,0)$ is subhyperbolic. Then there exists a set $E\subset\mathbb{C}$ of full mesure, such that for every $\widetilde w_0\in E$ the forward orbit of every disk in the fiber $\{w=\widetilde w_0\}$ must intersect a bulging Fatou component of $f_0$.
\label{prop:1}
\end{proposition}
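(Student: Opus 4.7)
The plan is to use the subhyperbolicity of $f_0$ to build a fiberwise expanding conformal metric on a neighborhood of the invariant fiber $\{w=0\}$, and then argue that if a disk $D\subset\{w=\widetilde w_0\}$ has a forward orbit disjoint from every bulging Fatou component of $f_0$, the expansion forces its images to grow unboundedly, a contradiction. The set $E$ is designed so as to exclude a Lebesgue-null collection of fibers whose $g$-orbit interacts badly with the moving critical set of the family $\{f_w\}$.

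Concretely I would proceed in four steps. First, the standard construction for a subhyperbolic polynomial produces an open neighborhood $U_0$ of $\mathcal{J}_{f_0}$ and an orbifold-type conformal metric $\rho_0$ on $U_0$, with at most finitely many mild singularities at pre-periodic critical points lying on $\mathcal{J}_{f_0}$, satisfying $\|f_0'(z)\|_{\rho_0}\geq\kappa>1$ off the singular locus. Second, since $w\mapsto f_w$ is continuous in the $C^1$-topology and the Julia set $\mathcal{J}_{f_w}$ varies continuously for $|w|$ small, I perturb $\rho_0$ into a continuous family $\rho_w$ on a uniform neighborhood $U$ of $\mathcal{J}_{f_0}$ verifying
\[
\|\partial_z f_w(z)\|_{\rho_w\to\rho_{g(w)}}\geq\kappa'>1
\]
for some uniform $\kappa'\in(1,\kappa)$ and all $|w|<r$, $r>0$ sufficiently small. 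Third, given $\widetilde w_0\in B_g$, pick $N$ so that $|g^n(\widetilde w_0)|<r$ for $n\geq N$; if the forward orbit of $D$ avoids every bulging Fatou component of $f_0$, then for $n\geq N$ the projection $\pi_1 F^n(D)$ must lie in $U$, since outside $U$ the nearby fiber dynamics is trapped inside bulging basins produced by Theorem~\ref{thm:4bis}. Fourth, the fiberwise expansion iterates to give
\[
\mathrm{diam}_{\rho_{g^n(\widetilde w_0)}}\bigl(\pi_1 F^n(D)\bigr)\geq(\kappa')^{n-N}\,\mathrm{diam}_{\rho_{g^N(\widetilde w_0)}}\bigl(\pi_1 F^N(D)\bigr),
\]
contradicting the fact that these projected disks are confined to a compact piece of $U$.

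The main obstacle is that the uniform expansion built in the second step degrades whenever the orbit approaches the critical set of the family $\{f_w\}_{|w|<r}$, where the orbifold singularities of $\rho_w$ concentrate and $\partial_z f_w$ vanishes. Let $\Sigma\subset\mathbb{C}$ be the projection onto the $w$-coordinate of this critical locus, together with the $g$-preimages of the post-critical set of $f_0$; under the subhyperbolicity hypothesis on $f_0$, this $\Sigma$ is a countable union of real-analytic arcs, hence of Lebesgue measure zero. Define $E$ to be the set of $\widetilde w_0\in B_g$ whose $g$-orbit does not shadow $\Sigma$ too rapidly relative to the geometric decay $|g^n(\widetilde w_0)|\asymp|g'(0)|^n$; a Borel--Cantelli argument, balancing shrinking tubular neighborhoods of $\Sigma$ against this rate, shows that the complement of $E$ in $B_g$ has Lebesgue measure zero, and the statement is vacuous outside $B_g$. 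The technical heart, and the crucial departure from Lilov's superattracting argument where super-exponential decay made any such quantification unnecessary, is precisely this balance between the merely geometric convergence $|g'(0)|^n$ and the loss of expansion incurred whenever the orbit of $D$ visits a critical fiber; this is what prevents one from strengthening Proposition~\ref{prop:1} to exclude \emph{every} fiber of $B_g$, in view of the Peters--Vivas phenomenon of Theorem~\ref{thm:5}.
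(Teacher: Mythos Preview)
Your approach is genuinely different from the paper's, and in fact it is essentially the strategy that Peters and the second author use later for the \emph{elliptic} invariant fiber (Theorem~\ref{thm:mainPR}): build a continuous family of conformal metrics $\rho_w$ on nearby fibers with respect to which $F$ is fiberwise expanding, and conclude that derivatives blow up along any orbit that avoids the bulging components. The Peters--Smit argument reproduced in the paper does \emph{not} use an expanding metric. Instead it (i) shows that for $\widetilde w_0$ in a full-measure set $E$ the number of critical points of $z\mapsto F_1^{\circ n}(z,\widetilde w_0)$ that fail to land in a fixed neighborhood of the attracting cycles is at most $C\sqrt n$; (ii) proves an exponential \emph{area} decay ${\rm Area}(F^{\circ n}(D))\le \widetilde C\gamma^n$ using linearization along the unstable manifold of the repelling cycle absorbing the Julia critical orbits; and (iii) pulls back this area estimate through the proper map $F_1^{\circ n_j}:O_j\to D(\zeta,r)$, whose degree is controlled by (i), to force the Poincar\'e area of $D$ itself to zero.

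There is a real gap in your sketch. Your description of $\Sigma$ is not well-defined: the $z$-critical locus $\{\partial_z f(z,w)=0\}$ projects onto \emph{all} of $\mathbb{C}$ in the $w$-coordinate (every $f_w$ has $d-1$ critical points), so $\Sigma$ as written is not a null set, and the ``$g$-preimages of the post-critical set of $f_0$'' mixes coordinates incoherently. More seriously, the orbifold metric $\rho_0$ carries genuine singularities at the Julia critical points of $f_0$, and these are precisely what does not perturb well: for $w\ne 0$ the critical points of $f_w$ are no longer pre-periodic, so there is no canonical place to put the singularities of $\rho_w$, and your uniform bound $\|\partial_z f_w\|_{\rho_w\to\rho_{g(w)}}\ge\kappa'$ cannot hold on any fixed neighborhood of those moving critical points. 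The paper itself flags exactly this difficulty after Theorem~\ref{thm:mainPR}: the expanding-metric method is only carried out there under the extra hypothesis that all critical points of $f_0$ lie in attracting or parabolic basins, and it is stated that handling pre-periodic Julia critical points by that method is unclear. The Peters--Smit counting-and-area argument is designed precisely to bypass this obstruction in the full subhyperbolic setting; your Borel--Cantelli sketch would need to reproduce, in effect, their $C\sqrt n$ critical-point bound, which is where all the work lies.
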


\begin{proof}[Idea of the proof]
Notice that it suffices to prove the proposition in a neighbourhood of the attracting fiber $\{w=0\}$. Therefore, up to considering a smaller neighbourhood, we can assume without loss of generality that $g(w) = \lambda w$, and
$$
f(z,w) = a_0(w) + a_1(w) z + \cdots + a_d(w) z^d
$$
where $a_0(w), \dots, a_d(w)$ are holomorphic functions in $w$. The subhyperbolicity of the polynomial $f_0$ implies that its Fatou set is the union of finitely many attracting basins, and the orbits of the critical points contained in the Fatou set converge to one of these attracting cycles. 
The proof can be divided into 5 main steps.

\smallskip\noindent{\em Step 1.} Fix $R>0$ large enough so that for all $z$ such that $|z|>R$ we have $|f_0(z)|>2|z|$ and set 
$$
W_0 = \{|z|>R\} \cup \bigcup_{y\in {\rm Att}(f_0)} W_y
$$
where ${\rm Att}(f_0)$ is the set of all attracting periodic points of $f_0$, and for each $y\in {\rm Att}(f_0)$ the set $W_y$ is an open neighbourhood of the orbit of $y$ such that $\overline{f_0(W_y)}\subset W_y$. 
Fix a neighbourhood $U$ of the post-critical set of $f_0$. Then by \cite[Proposition 15]{PS}, there exists a set $E\subset \mathbb{C}$ of full mesure in a neighbourhood of the origin such that for all $\widetilde w_0\in E$ there exists a constant $C = C(\widetilde w_0,U)$ such that for all $n\in\mathbb{N}$ we have
\begin{equation}
\label{eq:3}
{\rm Card}\left\{z:  \frac{\partial F_1^{\circ n}}{\partial z}(z,\widetilde w_0) = 0~\hbox{and}~F_1^{\circ n}(z,\widetilde w_0)\not\in W_0\times U\right\} \le C \sqrt{n}, 
\end{equation}
where $F_1^{\circ n}$ is the first component of the $n$-th iterate of $F$.

\smallskip\noindent{\em Step 2.} Assume by contradiction that a fiber $\{w = \widetilde w_0\}$, with $\widetilde w_0\in E$,  contains a disk $D$ whose forward orbit avoids the bulging Fatou components of $f_0$. Then the restriction of $F^{\circ n}$ to $D$ is bounded and hence a normal family. Therefore, up to shrinking $D$ there exists a subsequence $F^{\circ n_j}$ such that $F^{\circ n_j}|_D$ converges, uniformly on compact subsets of $D$, to a point $\zeta$ in the Julia set of $f_0$. Moreover, there exists $\varepsilon >0$ so that $F^{\circ n}(D)\cap(W_0\times D(0,\varepsilon))$ is empty for all $n\in\mathbb{N}$. 

\smallskip\noindent{\em Step 3.} Each critical point $x$ contained in the Julia set is eventually mapped into a repelling periodic point, and up to considering an iterate of $F$ we may assume that it is  eventually mapped into a repelling fixed point with multiplier $\mu$, with $|\mu|>1$. 
The main tool to control the orbits of the critical points of $F$ is obtained using a linearization map of the unstable manifold of the repelling fixed point, given by a map $\Phi: \mathbb{C}\to\mathbb{C}$ satisfying $\Phi(\mu t) = f_0^{\circ k}\circ \Phi(t)$ for some $k\in \mathbb{N}$. Thanks to \cite[Proposition 10]{PS}, there exist $\widetilde C>1$ and $0<\gamma<1$ so that
\begin{equation}
\label{eq:4}
{\rm Area}(F^{\circ n}(D))\le \widetilde C\gamma^n.
\end{equation}

\smallskip\noindent{\em Step 4.} We may assume that $\zeta$ does not lie in the post-critical set, and we may choose $U$ and $r>0$ such that $D(\zeta, r)\cap (U\cup W_0)=\emptyset$.
Let $j_1\in\mathbb{N}$ be such that $F_1^{\circ n_j}(D) \subseteq D(\zeta, \frac{r}{2})$ for all $j\ge j_1$, and consider $O_j$ the connected component of $(F^{\circ n_j})^{-1}(D(\zeta, r)\times\{\lambda^{n_j}\widetilde w_0\})$ containing $D$. Then $D \subseteq O_j \subseteq D(0,R) \times\{\widetilde w_0\}$, and we can study the proper holomorphic function $F_1^{\circ n_j}:  O_j\to D(\zeta, r)$. Thanks to \eqref{eq:3}, such a map has at most $d_j = C \sqrt{n_j}$ critical points. 

\smallskip\noindent{\em Step 5.} It is possible (see \cite[Proposition 28]{PS}) to find a uniform constant $C_1>0$ so that if $f: \mathbb{D}\to\mathbb{D}$ is a proper holomorphic function of degree $d$, the set $R\subset \mathbb{D}$ has Poincar\'e area equal to $A$, and $d\cdot A^{1/2d}<8$, then the Poincar\'e area of $f^{-1}(R)$ is at most $C_1 d^3 A^{1/d}$. Then, setting $R_j = F_1^{\circ n_j}(D)$ and denoting by $A_j$ its Poincar\'e area ${\rm Area}_{D(\zeta,r)}(R_j)$ with respect to $D(\zeta,r)$, for $j \ge j_1$, we have $R_j\subseteq D(\zeta,r)$, and we can estimate $A_j$ applying \eqref{eq:4}. Therefore there exists $j_2\ge j_1$ such that $d_j A_j^{1/2d_j}<1/8$ for all $j\ge j_2$. This implies
$$
{\rm Area}_{D(0,R)}(D)\le {\rm Area}_{O_j}(D)\le C_2 d_j^3 A_j^{{1/d_j}} \le M n_j^{3/2} \gamma^{n_j^{3/2}}
$$
where $M>0$. The contradiction follows from the fact that the last expression will converge to zero as $j$ increases towards infinity.
\end{proof}

Thanks to the fact that in particular  $E$ is dense, Peters and Smit are able to give a negative answer to Question 2 when the action on the invariant fiber is subhyperbolic. They also obtain as a corollary that the only Fatou components of $F$ are the bulging ones, since the topological degree of $F$ equals the one of $f_0$, implying that the only Fatou components that can be mapped onto the bulging Fatou components of $f_0$ are exactly those bulging Fatou components.

\begin{theorem}[Peters-Smit \cite{PS}] 
Let $F: \mathbb{C}^2\to\mathbb{C}^2$ be a polynomial skew-product of the form \eqref{eq:1}. Assume that the origin is an attracting fixed point for $g$ with corresponding basin ${\cal B}_g$, and the polynomial $f_0(z) := f(z,0)$ is subhyperbolic. Then $F$ has no wandering Fatou component over ${\cal B}_g$.
\end{theorem}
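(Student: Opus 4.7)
The plan is to argue by contradiction, leveraging Proposition \ref{prop:1} together with the fact that the one-dimensional Fatou components of a subhyperbolic polynomial are all (pre-)periodic. Suppose that $\Omega$ is a wandering Fatou component of $F$ contained in ${\cal B}_g\times\mathbb{C}$. The projection $\Omega_2:=\pi_2(\Omega)$ is a non-empty open subset of ${\cal B}_g$. Since the set $E\subset\mathbb{C}$ furnished by Proposition \ref{prop:1} has full measure and is in particular dense, I can choose $\widetilde w_0\in\Omega_2\cap E$. The fiber slice $\Omega\cap\{w=\widetilde w_0\}$ is then a non-empty open subset of the fiber, so it contains a holomorphic disk $D$.

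Next, I apply Proposition \ref{prop:1} to this disk $D$: the forward orbit of $D$ must intersect a bulging Fatou component of $f_0$, i.e., a two-dimensional Fatou component $\widetilde B$ of $F$ whose intersection with the invariant fiber $\{w=0\}$ is a one-dimensional Fatou component of $f_0$. Hence there exists $n\geq 1$ with $F^{\circ n}(D)\cap\widetilde B\neq\emptyset$. Because $D\subset\Omega$ and $F^{\circ n}(\Omega)$ is connected and contained in the Fatou set of $F$ (by the two-dimensional analogue of Proposition \ref{stableproper}), this forward image lies in a single Fatou component, which must therefore coincide with $\widetilde B$.

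Now I use subhyperbolicity of $f_0$: all one-dimensional Fatou components of $f_0$ are (pre-)periodic attracting basins, so by Theorem \ref{thm:4bis} their bulgings are (pre-)periodic Fatou components of $F$. In particular $\widetilde B$ is (pre-)periodic, so there exist integers $m\geq 0$ and $p\geq 1$ with $F^{\circ(m+p)}(\widetilde B)=F^{\circ m}(\widetilde B)$. Combined with $F^{\circ n}(\Omega)=\widetilde B$, this gives $F^{\circ(n+m+p)}(\Omega)=F^{\circ(n+m)}(\Omega)$, contradicting the assumption that $\Omega$ is wandering. This will complete the proof.

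The heavy lifting in this argument has already been carried out in Proposition \ref{prop:1}, where the delicate estimates on critical orbits, the area bounds from linearization of repelling periodic points, and the measure-theoretic control on the fiber parameter $\widetilde w_0$ are assembled. Given that proposition, the obstacle in the present theorem is essentially bookkeeping: one must verify that the forward orbit truly lands inside a single Fatou component (rather than merely touching its closure), which is where the openness of the Fatou set and the fact that $F^{\circ n}(\Omega)$ is itself a Fatou component are used. The density (not just full measure) of $E$ is what ensures we can find a good slice of $\Omega$ on which to apply Proposition \ref{prop:1}.
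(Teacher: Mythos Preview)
Your argument is correct and follows essentially the same route as the paper: the paper simply remarks that the density of $E$ (guaranteed by its having full measure) together with Proposition~\ref{prop:1} yields the non-wandering conclusion, and you have spelled out exactly that deduction. One small notational point: you only know $F^{\circ n}(\Omega)\subseteq\widetilde B$, not equality, but this is all that is needed, since it forces the Fatou component containing $F^{\circ n}(\Omega)$ to be the (pre-)periodic component $\widetilde B$.
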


Very recently, Ji was able in \cite{Ji} to generalize Lilov's Theorem \ref{thm:3} to polynomial skew-products with an invariant geometrically attracting fiber under the hypothesis that the multiplier of the invariant fiber is sufficiently small. More precisely he proved the following result.

\begin{theorem}[Ji \cite{Ji}]
Let $F: \mathbb{C}^2\to\mathbb{C}^2$ be a polynomial skew-product of the form \eqref{eq:1} of degree $d\ge 2$. Let $\{w=w_0\}$ be an attracting invariant fiber for $F$ and let $\mathcal{B}_{w_0}$ be the  basin of the attracting fixed point $c$. Then there exists $\lambda_0(w_0, f)>0$ depending only on $f$ and $c$ such that if $|g'(w_0)|<\lambda_0$, then there are no wandering Fatou components in $\mathcal{B}_{w_0}\times \mathbb{C}$.
\label{thm:Ji}
\end{theorem}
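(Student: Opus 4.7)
The plan is to adapt Lilov's strategy from Theorem \ref{thm:3} (the superattracting case) to the geometrically attracting setting, paying close attention to the quantitative control one loses when the vertical contraction is only exponential rather than super-exponential. Assume toward a contradiction that $F$ admits a wandering Fatou component $\Omega \subset \mathcal{B}_{w_0}\times \mathbb{C}$. After normalizing $w_0 = 0$ and applying Koenigs linearization near $0$, we may assume $g(w) = \lambda w$ with $0<|\lambda|<1$ and that $\Omega$ contains a holomorphic disk $D$ in some fiber $\{w = \widetilde w\}$. Theorem \ref{thm:4bis} produces a bulging Fatou component of $F$ for every Fatou component of $f_0$, so wandering of $\Omega$ forces the forward orbit $F^{\circ n}(D)$ to avoid all bulging components. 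The goal is to derive a contradiction by showing that when $|\lambda|<\lambda_0$ the horizontal projections $D_n := F^{\circ n}(D)\subset \{w=\lambda^n\widetilde w\}$ cannot remain confined away from these bulging components indefinitely.

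The core step is a quantitative radius-propagation estimate. Write the horizontal component of $F^{\circ n}$ as the non-autonomous composition
\[
f_{\lambda^{n-1}\widetilde w} \circ f_{\lambda^{n-2}\widetilde w} \circ \cdots \circ f_{\widetilde w},
\]
in which each factor $f_{\lambda^k\widetilde w}$ is a perturbation of the invariant-fiber map $f_0$ of size at most $C|\lambda|^k|\widetilde w|$ on a fixed bounded set. On the complement of the bulging Fatou components, one seeks a lower bound on the horizontal expansion of $f_0$, measured in an appropriate (Poincar\'e or hyperbolic) metric on a neighbourhood of the Julia set of $f_0$. The cumulative perturbation summed along the orbit is controlled by a geometric series $\sum_k C|\lambda|^k|\widetilde w|$, which is $O(|\widetilde w|/(1-|\lambda|))$. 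Provided $|\lambda|$ is small enough that this total perturbation is dominated by the horizontal expansion one gains between successive potential intersections with bulging components, the diameter of $D_n$ must grow until $D_n$ meets a bulging component of $f_0$, contradicting the assumption. The threshold $\lambda_0$ is then extracted by a compactness argument on the basin of $c$, yielding a constant depending only on $f$ and $c$.

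The principal obstacle is that one does not in general have a uniform expansion rate on the complement of the bulging Fatou components of $f_0$, because $f_0$ may have critical points on its Julia set, parabolic cycles, or Cremer points. This is precisely why Peters and Smit needed the subhyperbolic hypothesis in Proposition \ref{prop:1}, where the area/critical-point bookkeeping \eqref{eq:3}--\eqref{eq:4} is used to replace the missing expansion. The insight in Ji's argument is that smallness of $|\lambda|$ buys enough room to avoid any global hyperbolicity assumption: the vertical contraction $|\lambda|^n$ drives the perturbation to zero fast enough that only \emph{finite-time} horizontal expansion of $f_0$ is ever needed, and such finite-time expansion exists on any compact set away from the (countably many) obstructions. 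Making this quantitative uniformly in $\widetilde w\in \mathcal{B}_g$ and in the wandering disk $D$ is the delicate part; once it is in place, every Fatou component of $F$ over $\mathcal{B}_{w_0}$ is bulging, hence pre-periodic, and the theorem follows.
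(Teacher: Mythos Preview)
Your high-level strategy agrees with what the paper describes: adapt Lilov's argument from Theorem~\ref{thm:3}, assume a wandering disk in a fiber, and show that when $|\lambda|$ is small the forward images must eventually intersect a bulging Fatou component. The paper itself does not give a detailed proof, only a short paragraph summarizing Ji's approach.

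The paper, however, names a specific ingredient you omit. The breakdown of Lilov's estimates in the geometrically attracting case is overcome by adapting a one-dimensional result of Denker, Przytycki and Urba\'nski \cite{DPU} on the transfer operator for rational maps. That result is what furnishes the quantitative estimates on (i) the size of the bulging Fatou components near the invariant fiber and (ii) the size of forward images of wandering Fatou disks --- without any hyperbolicity hypothesis on $f_0$. These two estimates are then played off against each other exactly as in Lilov's original scheme, and the threshold $\lambda_0(w_0,f)$ is produced by that comparison.

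Your proposed substitute --- that smallness of $|\lambda|$ reduces the problem to ``finite-time horizontal expansion of $f_0$ on compact sets away from the (countably many) obstructions'' --- does not close this gap. The wandering disk must be controlled for all times, and its orbit will typically pass arbitrarily close to critical points on $\mathcal{J}_{f_0}$, parabolic cycles, or Cremer points infinitely often; at such passages finite-time expansion is not uniform and the disk radius can collapse. Theorem~\ref{thm:5} of Peters and Vivas already exhibits, in the geometrically attracting setting, disks whose forward orbits shrink and accumulate on a repelling point in $\mathcal{J}_{f_0}$, so a soft argument of the kind you sketch cannot succeed in general. What is needed is a hard estimate valid for \emph{arbitrary} $f_0$, and that is precisely the role of the Denker--Przytycki--Urba\'nski input.
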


The proof of this result follows Lilov's strategy. The main difficulty is due to the breaking down of Lilov's argument in the geometrically attracting case as we pointed out at the beginning of this section. Ji is able to overcome such difficulty by adapting a one-dimensional result due to Denker, Przytycki and Urbanski in \cite{DPU} in this case. Such result is used to obtain estimates of the size of bulging Fatou components and of the size of forward images of wandering Fatou disks.

\smallskip
Ji also proved in \cite{Ji2} that if $F: \mathbb{C}^2\to\mathbb{C}^2$ is a polynomial skew-product with an attracting invariant fiber $L$ such that the restriction of $F$ to $L$ is non-uniformly hyperbolic, then the Fatou set in the basin of $L$ coincides with the union of the basins of attracting cycles, and the Julia set in the basin of $L$ has Lebesgue measure zero. Therefore there are no wandering Fatou components in the basin of the invariant attracting fiber.
 
%%%%%
\subsection{Parabolic invariant fiber and wandering domains}
\label{subsec:8.3}
%%%%%

A first contribution to the investigation of this case is due to Vivas, who proved a parametrization result  \cite[Theorem 3.1]{Vi} for the unstable manifolds for {\em special} parabolic skew-product of $\mathbb{C}^2$. Vivas used this parametrization as the main tool to prove the analogue of Theorem \ref{thm:5} for special parabolic skew-product. However, this construction does not allow to construct a wandering Fatou component in a neighbourhood of the parabolic invariant fiber.

In \cite{ABDPR}, together with Astorg, Dujardin and Peters, we proved the existence of polynomial skew-products of $\mathbb{C}^2$, extending to holomorphic endomorphisms of $\mathbb{P}^2(\mathbb{C})$, having a wandering Fatou component. The key tool consists in using parabolic implosion techniques on polynomial skew-products, and this idea was initially suggested by Lyubich. The main strategy is to combine slow convergence to an invariant parabolic fiber and parabolic transition in the fiber direction, to produce orbits shadowing those of the so-called Lavaurs map, that we defined in the previous section.

\begin{theorem}[\cite{ABDPR}]
There exists a holomorphic endomorphism $F: \mathbb{P}^2(\mathbb{C})\to \mathbb{P}^2(\mathbb{C})$, induced by  a polynomial skew-product mapping
$F: \mathbb{C}^2\to \mathbb{C}^2$, having a wandering Fatou component. More precisely, let $f: \mathbb{C}\to \mathbb{C}$ and $g: \mathbb{C}\to \mathbb{C}$ be polynomials of the form
\begin{equation}
\label{eq:fg}
f(z) = z+z^2+{\rm O}(z^3)~~\hbox{and}~~g(w) = w-w^2+{\rm O}(w^3).
\end{equation}
If the Lavaurs map $\mathcal{L}_f: \mathcal{B}_f\to \mathbb{C}$ has an attracting fixed point, then the skew-product 
$F: \mathbb{C}^2\to \mathbb{C}^2$ defined by
\begin{equation}
\label{eq:skew}
F(z,w) := \left(f(z)+\frac{\pi^2}{4} w,g(w)\right)
\end{equation}
has a wandering Fatou component.
\label{thm:wandering}
\end{theorem}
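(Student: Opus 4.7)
The plan is to treat $F$ as a non-autonomous parabolic implosion in the fiber direction, with the second-coordinate dynamics of $g$ supplying the Lavaurs perturbation parameter. For $w_0 \in \mathcal{B}_g$, the standard parabolic analysis of $g(w) = w - w^2 + O(w^3)$ (using the Fatou coordinate $W = 1/w$ in which $g$ becomes $W \mapsto W + 1 + O(1/W)$) gives $w_n := g^{\circ n}(w_0) = \frac{1}{n}(1 + o(1))$. In the first coordinate, $z_{n+1} = f(z_n) + \varepsilon_n^2$ with $\varepsilon_n^2 := \frac{\pi^2}{4} w_n \sim \frac{\pi^2}{4n}$, so $\pi/\varepsilon_n \sim 2\sqrt{n}$. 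The coefficient $\pi^2/4$ is exactly what makes one block of length $\Delta n \sim 2\sqrt{n}$ starting at time $n$ approximate a single application of $\mathcal{L}_f$: within such a block $w_j$ is essentially constant (since $\Delta n \ll n$), so the block is a quasi-autonomous perturbation of $f$ with $\Delta n - \pi/\varepsilon_n \to 0$, hence approximately $\mathcal{L}_f$ by Proposition~\ref{prop:lavaurs}.

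Next I would prove the following shadowing statement. Let $\zeta \in \mathcal{B}_f$ be an attracting fixed point of $\mathcal{L}_f$ with multiplier $\lambda$, $|\lambda| < 1$, and immediate basin $U$, so $\mathcal{L}_f(\overline{U}) \subset U$ and $\mathcal{L}_f^{\circ k}|_U \to \zeta$. I claim one can choose $z_0 \in U$, $w_0 > 0$ small on the attracting axis of $g$, and an increasing sequence $n_k$ with $n_{k+1} - n_k \sim 2\sqrt{n_k}$, such that for $(z,w)$ in a fixed open neighborhood $V \ni (z_0,w_0)$, one has $\pi_1 F^{\circ n_k}(z,w) \to \zeta$ and $\pi_2 F^{\circ n_k}(z,w) \to 0$, with all intermediate iterates staying in a compact subset of $\mathcal{B}_f \times \mathcal{B}_g$. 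The argument proceeds by induction on $k$: within each block $[n_k, n_{k+1})$, the locally uniform convergence in Proposition~\ref{prop:lavaurs}, applied to the slowly varying $\varepsilon_j$, shows that the block composes to $\mathcal{L}_f$ plus a block error $\eta_k \to 0$. Because $\zeta$ is attracting, the shadowing errors $\delta_k := |\pi_1 F^{\circ n_k}(z,w) - \mathcal{L}_f^{\circ k}(z)|$ satisfy a recursion of the form $\delta_{k+1} \leq (|\lambda| + o(1))\delta_k + \eta_k$, which forces $\delta_k \to 0$ even when the $\eta_k$ decay slowly.

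The shadowing and the control on intermediate iterates give a uniformly bounded family $(F^{\circ n}|_V)_{n \in \mathbb{N}}$, hence a normal family. Let $\Omega$ be the Fatou component of $F$ containing $V$. To see $\Omega$ is wandering, suppose for contradiction $F^{\circ (m+p)}(\Omega) = F^{\circ m}(\Omega)$ for some $m \geq 0$ and $p \geq 1$. Projecting to the second coordinate, $g^{\circ (m+p)}(\pi_2 \Omega) = g^{\circ m}(\pi_2 \Omega)$. Shrinking $V$ so that $\pi_2 \Omega$ lies in a disk about $0$ on which all iterates $g^{\circ n}$ are univalent (possible since $g'(0) = 1$ and the $g$-iterates of $\pi_2 \Omega$ stay near $0$), this forces $g^{\circ p}(\pi_2 \Omega) = \pi_2 \Omega$, hence $g^{\circ np}(\pi_2 \Omega) = \pi_2 \Omega$ for all $n$. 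But $g^{\circ j}(w) \to 0$ for every $w \in \mathcal{B}_g$, so the sets on the left shrink to $\{0\}$, contradicting non-emptiness of the open set $\pi_2 \Omega$.

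The main obstacle is the non-autonomous shadowing in the second step: it is a genuine refinement of Proposition~\ref{prop:lavaurs} in which the perturbation $\varepsilon_j$ is not constant across a block but varies at a rate compatible with the parabolic scaling of $g$. One must choose the block endpoints $n_k$ so that $(n_{k+1}-n_k) - \pi/\varepsilon_{n_k} \to 0$, control the drift of the approximate attracting and repelling Fatou coordinates of the perturbed maps $f + \varepsilon_j^2$ across each block, and exploit the attracting contraction at $\zeta$ to absorb the block errors. The rigid choice of the coefficient $\pi^2/4$ in $F$ is forced by the matching condition in Proposition~\ref{prop:lavaurs} together with the asymptotic $w_n \sim 1/n$ inherited from $g(w) = w - w^2 + O(w^3)$; any other coefficient would produce a phase defect that drifts along the blocks and destroys convergence to $\mathcal{L}_f$.
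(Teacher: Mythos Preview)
Your overall strategy matches the paper's: the return times are precisely $n_k=k^2$ (so the blocks have length $2k+1$), and the key technical step is the non-autonomous Lavaurs estimate you flag as the main obstacle, stated in the paper as Proposition~\ref{pr:key}: $F^{\circ(2n+1)}\bigl(z,g^{\circ n^2}(w)\bigr)\to(\mathcal{L}_f(z),0)$ locally uniformly on $\mathcal{B}_f\times\mathcal{B}_g$. The paper then takes a disk $V\ni\xi$ with $\mathcal{L}_f(V)\Subset V$ and uses Proposition~\ref{pr:key} to get the trapping inclusion $\pi_1\circ F^{\circ(2n+1)}\bigl(V\times g^{\circ n^2}(W)\bigr)\Subset V$ for all large $n$; this replaces your error recursion $\delta_{k+1}\le(|\lambda|+o(1))\delta_k+\eta_k$ by a single invariant-neighborhood argument, which is simpler.

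Your wandering argument, however, has a genuine gap. You cannot arrange that $\pi_2\Omega$ lies in a small disk by shrinking $V$: the Fatou component $\Omega$ is determined by the dynamics of $F$, and shrinking $V$ does not shrink $\Omega$. More fundamentally, the second coordinate alone cannot detect wandering, since $\pi_2\Omega$ sits inside the parabolic basin $\mathcal{B}_g$, which is itself a \emph{fixed} Fatou component of $g$; there is no contradiction in having an open $A\subset\mathcal{B}_g$ with $g^{\circ p}(A)=A$ (take $A=\mathcal{B}_g$). Your claim that ``the sets on the left shrink to $\{0\}$'' conflates pointwise convergence $g^{\circ j}(w)\to 0$ with convergence of the image sets. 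The paper argues via the \emph{first} coordinate instead: since $F^{\circ n^2}\to(\xi,0)$ on $\Omega$, one gets $F^{\circ n^2}\to\bigl(f^{\circ j}(\xi),0\bigr)$ on $F^{\circ j}(\Omega)$; hence $F^{\circ i}(\Omega)=F^{\circ j}(\Omega)$ would force $f^{\circ i}(\xi)=f^{\circ j}(\xi)$, impossible because $\xi\in\mathcal{B}_f$ cannot be preperiodic under $f$.

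A smaller inaccuracy: the intermediate iterates between times $n^2$ and $(n+1)^2$ do \emph{not} stay in a compact subset of $\mathcal{B}_f\times\mathcal{B}_g$, since during the implosion the first coordinate passes arbitrarily close to the parabolic point $0\in\partial\mathcal{B}_f$. The paper only shows that the subsequence $(F^{\circ n^2})$ is bounded on $U$, and then observes that boundedness of a subsequence forces boundedness of the full sequence because points with large $|z|$ escape uniformly to infinity.
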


The orbits in these wandering  Fatou components are bounded and the approach used in the proof is essentially local.  
Notice that if $f$ and $g$ have the same degree, $F$ extends to a holomorphic endomorphism of $\mathbb{P}^2(\mathbb{C})$.
Moreover we can obtain examples in arbitrary dimension $k\geq 2$ by simply considering   products mappings of the form  $(F,Q)$, where $Q$ has a fixed Fatou component.

A first step in the proof of Theorem \ref{thm:wandering} is to find a parabolic polynomial $f$ whose Lavaurs map $\mathcal{L}_f$ admits an attracting fixed point. 

\begin{proposition}[{\cite[Proposition B]{ABDPR}}]
Let $f: \mathbb{C}\to \mathbb{C}$ be the cubic polynomial defined by
\[
f(z)
=
z+z^2+az^3
\quad\text{with}\quad a\in \mathbb{C}.
\]
If $r>0$ is sufficiently close to $0$ and $a$ belongs to the disk $D(1-r,r)$,  then  the Lavaurs map ${\cal L}_f: {\cal B}_f\to \mathbb{C}$ admits an attracting fixed point.
\label{prop:main}
\end{proposition}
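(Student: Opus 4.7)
The plan is to treat $b := 1-a$ as a small complex parameter; the hypothesis $a \in D(1-r, r)$ translates to $b \in D(r, r)$, so $\Re b > 0$ and $|b| < 2r$. The approach is perturbative as $b \to 0$.

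In the coordinate $Z = -1/z$ the map reads $F(Z) = Z + 1 + b/Z + O(1/Z^{2})$, and the normalizations from Section \ref{subsec:7.2} give
\[
\Phi_{f_a}(z) = -\frac{1}{z} - b\log\!\left(-\frac{1}{z}\right) + o(1), \qquad
\Psi_{f_a}(Z) \sim -\frac{1}{Z + b\log(-Z) + o(1)}.
\]
Lifting the Lavaurs map to Fatou coordinates produces the \emph{horn map} $\mathcal{H}_{f_a} := \Phi_{f_a} \circ \Psi_{f_a}$, which commutes with $T_1$ and thus descends to the Ecalle cylinder. A direct substitution of the leading-order formulas gives
\[
\mathcal{H}_{f_a}(Z) = Z + b\bigl(\log(-Z) - \log Z\bigr) + o(b),
\]
so with the principal branch $\mathcal{H}_{f_a}$ asymptotes to translation by $-i\pi b$ on the upper horn ($\Im Z \to +\infty$) and to $+i\pi b$ on the lower horn ($\Im Z \to -\infty$). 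Since $\Re b > 0$, both asymptotic translations have imaginary parts pointing strictly \emph{inward}, toward the bulk of the cylinder.

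A fixed point of $\mathcal{L}_{f_a}$ in $\mathcal{B}_{f_a}$ corresponds to a fixed point $Z^*$ of $\mathcal{H}_{f_a}$ with $\Psi_{f_a}(Z^*) \in \mathcal{B}_{f_a}$. Its existence would be forced by the inward-pointing asymptotic behavior: an argument-principle / Brouwer-style count applied to $\mathcal{H}_{f_a} - \mathrm{Id}$ on a suitable fundamental domain in the cylinder yields at least one fixed point in the transition region where the leading translation $\mp i\pi b$ is balanced by the sub-leading corrections driven by the critical orbit of the cubic $f_a$. Once $z^* := \Psi_{f_a}(Z^*)$ is located, its multiplier $\mathcal{L}_{f_a}'(z^*)$ equals the multiplier of $\mathcal{H}_{f_a}$ at $Z^*$, which can be bounded via the Schwarz--Pick lemma applied to the strict self-inclusion of an invariant hyperbolic region produced by the inward translations.

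The main obstacle is to turn this Schwarz--Pick contraction into a \emph{quantitative} bound uniform in $a \in D(1-r, r)$. At leading order in $b$ the horn map is a pure translation, which has derivative identically $1$; the strict inequality $|\mathcal{L}_{f_a}'(z^*)| < 1$ must be extracted from the sub-leading terms, which involve both the asymptotic corrections to $\Phi_{f_a}$ and $\Psi_{f_a}$ and the positions in the cylinder of the two critical values of $f_a$. This is especially delicate near the portion of $\partial D(r, r)$ where $\Re b \to 0$, since the inward translation then degenerates and the contraction factor approaches $1$; choosing $r$ small enough to keep all relevant asymptotic error terms strictly smaller than the inward translation is the heart of the argument.
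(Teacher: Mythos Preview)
Your setup matches the paper's: you correctly identify the horn map $\mathcal{H}_{f_a}=\Phi_{f_a}\circ\Psi_{f_a}$ (what the paper calls $\mathcal{E}_f$), the fact that it commutes with $T_1$, and the asymptotic constant $c_0=-\pi i(1-a)=-\pi i b$ at the upper end. But from that point on your proposal is a sketch of a strategy rather than a proof, and the strategy you outline is not the one that actually closes the argument.

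The genuine gap is in both the existence and the attractivity steps. Your ``argument-principle / Brouwer-style count'' for a fixed point is not justified: knowing only that $\mathcal{H}_{f_a}-\mathrm{Id}$ tends to $\mp i\pi b$ at the two ends of the cylinder gives no control on the winding of $\mathcal{H}_{f_a}-\mathrm{Id}$ along any closed contour, because you have said nothing about what happens in the transition region. Likewise, the proposed Schwarz--Pick bound presupposes a forward-invariant hyperbolic subdomain, which you have not produced; as you yourself note, at leading order the horn map is a pure translation with derivative $1$, so any contraction must come from the next-order structure, and you have not identified what that structure is.

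The paper's proof supplies exactly that missing structure. Since $\mathcal{E}_f-\mathrm{Id}$ is $1$-periodic, it has a Fourier expansion $\mathcal{E}_f(Z)=Z+\sum_{k\ge 0}c_k e^{2\pi i kZ}$ in an upper half-plane, with $c_0=-\pi i(1-a)$. The decisive fact, proved via Epstein's theory of finite type analytic maps, is that $c_1\neq 0$. Once this is known, solving $\mathcal{E}_f(Z)=Z$ perturbatively gives $c_1 e^{2\pi i Z_f}\sim \pi i(1-a)$ as $a\to 1$, and differentiating yields the multiplier
\[
\rho_f-1 \;\sim\; 2\pi i\, c_1 e^{2\pi i Z_f} \;\sim\; -2\pi^2(1-a).
\]
Thus $|\rho_f|<1$ precisely when $\Re(1-a)>0$ and $|1-a|$ is small, i.e.\ when $a\in D(1-r,r)$ for $r$ small. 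No Schwarz--Pick or topological fixed-point argument is needed; everything comes from the first two Fourier coefficients, and the nontrivial input you are missing is $c_1\neq 0$.
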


\begin{proof}[Idea of the proof] 
We consider
\[
{\cal U}_f:=\psi_f^{-1}({\cal B}_f)
{\quad\text{and}\quad} 
{\cal E}_f:=\varphi_f\circ \psi_f: {\cal U}_f\to \mathbb{C}.\]
The open set ${\cal U}_f$ contains an upper half-plane and a lower half-plane, and it is invariant under $T_1$.  Like the Lavaurs map, the map  ${\cal E}_f$ commutes with $T_1$, therefore ${\cal E}_f-\mathop{\rm Id}\nolimits$ is periodic of period 1 and admits a Fourier expansion in a upper half-plane: 
\[{\cal E}_f(Z) = Z + \sum_{k\geq 0} c_k e^{2\pi i k Z}.\]
Using the expansion of $\varphi_f$ and $\psi_f$ near infinity, we obtain with an elementary computation: 
\begin{align*}
{\cal E}_f(Z) = \varphi_f\circ \psi_f(Z) &= Z + (1-a)\log(-Z)+{ o}(1)\\
&\quad\quad\,-(1-a)\log\bigl(Z + (1-a)\log(-Z)+{\rm o}(1)\bigr) + { o}(1)\\
&= Z +(1-a)\log(Z)-\pi i (1-a)-(1-a)\log(Z) + { o}(1)\\
&= Z-\pi i(1-a)+{ o}(1),
\end{align*}
and so $c_0 = -\pi i(1-a)$.

Thanks to a more elaborate argument, based on the notion of finite type analytic map introduced by Adam Epstein, it is possible to prove that:
\[{\cal E}_f(Z) = Z-\pi i(1-a) + c_1e^{2\pi i Z} + { o}(e^{2\pi i Z})\quad \text{with}\quad c_1\neq 0.\]
It then follows that for $a\neq 1$ close to $1$, ${\cal E}_f$ has a fixed point $Z_f$ with multiplier $\rho_f$ satisfying
\[c_1e^{2\pi i Z_f}\sim \pi i(1-a)\quad \text{and}\quad \rho_f-1\sim 2\pi i c_1 e^{2\pi i Z_f} \sim -2\pi^2(1-a)\quad \text{as}\quad a\to 1. \]
Therefore, for $r>0$ sufficiently close to $0$ and $a\in D(1-r,1)$, the multiplier $\rho_f$ belongs to the unit disk and $Z_f$ is an attracting fixed point of ${\cal E}_f$. This concludes the proof since $\psi_f: {\cal U}_f\to {\cal B}_f$ semi-conjugates ${\cal E}_f$ to ${\cal L}_f$, and so, the fact that $Z_f$ is an attracting fixed point of ${\cal E}_f$ implies that the point $\psi_f(Z_f)$ is an attracting fixed point of ${\cal L}_f$.
\end{proof}

The key result in the proof of Theorem \ref{thm:wandering} relies on a non-autonomous analogue of Lavaurs estimates in the setting of skew-products.

Let ${\cal B}_f$ and ${\cal B}_g$ be the parabolic basins of $0$ under iteration of respectively $f$ and $g$.  One of the key points is to choose $(\widetilde z_0,\widetilde w_0)\in \mathcal{B}_f\times \mathcal{B}_g$ so that the first coordinate of~$F^{\circ m}(\widetilde z_0, \widetilde w_0)$ returns infinitely many times close to the attracting fixed point of $\mathcal{L}_f$. The proof is designed so that the return times are the integers $n^2$ for $n\geq n_0$. Therefore, we need to analyze the orbit segment between $n^2$ and  $(n+1)^2$, which is of length $2n+1$.

\begin{proposition}[\cite{ABDPR}]
As $n\to +\infty$, the sequence of maps
\[
\mathbb{C}^2\ni (z,w) \mapsto F^{  2n+1}\bigl(z,g^{  n^2}(w)\bigr)\in \mathbb{C}^2
\]
converges locally uniformly in $\mathcal{B}_f\times \mathcal{B}_g$ to the map
\[\mathcal{B}_f\times \mathcal{B}_g\ni (z,w)\mapsto \bigl( \mathcal{L}_f(z),0\bigr)\in \mathbb{C}\times\{0\}.\]\label{pr:key}
\end{proposition}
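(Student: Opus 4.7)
The strategy is to read the iteration of $F$ in the first coordinate as a non-autonomous parabolic perturbation of $f$ and to adapt the proof of Lavaurs' Proposition~\ref{prop:lavaurs} to this setting. Writing $(z_0,W_0) := (z, g^{\circ n^2}(w))$ and $(z_{k+1},W_{k+1}) := F(z_k,W_k)$, the skew-product form of $F$ gives $W_k = g^{\circ(n^2+k)}(w)$ and
\[
z_{k+1} \;=\; f(z_k) + \tfrac{\pi^2}{4}\, g^{\circ(n^2+k)}(w), \qquad 0\le k\le 2n.
\]
The second coordinate converges to $0$ for free (since $w\in \mathcal{B}_g$ and $g^{\circ(n^2+2n+1)}(w)\to 0$ locally uniformly), so the whole difficulty is in the first coordinate.

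\textbf{Fiber estimates and the resonance condition.} Because $g(w) = w - w^2 + O(w^3)$, the Fatou-coordinate analysis of subsection~\ref{subsec:7.2} (applied to $-g(-\cdot\,)$) yields $1/g^{\circ m}(w) = m + O(\log m)$ locally uniformly on $\mathcal{B}_g$, so
\[
\varepsilon_{n,k}^2 \;:=\; \tfrac{\pi^2}{4}\, g^{\circ(n^2+k)}(w) \;=\; \tfrac{\pi^2}{4(n^2+k)}\bigl(1 + O(\tfrac{\log n}{n})\bigr)
\]
uniformly in $k\in\{0,\ldots,2n\}$ and locally uniformly in $w$. A direct Riemann-sum computation then gives the key identity
\[
\sum_{k=0}^{2n}\varepsilon_{n,k} \;=\; \tfrac{\pi}{2}\sum_{k=0}^{2n}\frac{1+o(1)}{\sqrt{n^2+k}} \;\xrightarrow[n\to\infty]{}\; \pi,
\]
which plays, for the non-autonomous system, the role that $N_n-\pi/\varepsilon_n\to 0$ plays in Proposition~\ref{prop:lavaurs}.

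\textbf{Non-autonomous implosion.} To deduce convergence to $\mathcal{L}_f$, I would run Lavaurs' implosion argument with step-dependent perturbations $\varepsilon_{n,k}^2$. In the coordinate $\zeta = -1/z$, the map $f+\varepsilon^2$ reads $\zeta\mapsto \zeta + 1 + b/\zeta + \varepsilon^2 \zeta^2 + O(\cdots)$, so the rescaling $\tau := \varepsilon_{n,k}\,\zeta$ converts the non-autonomous recursion into a variable-step Euler scheme $\tau_{k+1} - \tau_k \simeq \varepsilon_{n,k}(1 + \tau_k^2)$ for the flow $\dot\tau = 1 + \tau^2$. Since the solution $\tau(t) = \tan(t+c)$ of this flow sweeps from $-\infty$ to $+\infty$ in time exactly $\pi$, the resonance $\sum\varepsilon_{n,k}\to \pi$ forces the discrete orbit to traverse the gate in precisely $2n+1$ steps. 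Gluing this central analysis with the attracting coordinate $\Phi_f$ on the incoming side (where $\zeta_k \simeq \Phi_f(z) + k$ as long as $\tau_k$ is near $-\infty$) and with the repelling parametrization $\Psi_f$ on the outgoing side (where $z_k \simeq \Psi_f(k - (2n+1))$), one obtains $z_{2n+1}\to \Psi_f\circ\Phi_f(z) = \mathcal{L}_f(z)$ locally uniformly on $\mathcal{B}_f$.

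\textbf{Main obstacle.} The hardest step is the rigorous Euler control inside the gate: one must compare $(\tau_k)$ with the continuous flow over a time interval of length $\pi$ through which $\tau$ passes through infinity, while simultaneously absorbing the drift created by the $k$-dependence of $\varepsilon_{n,k}$. The estimate $\varepsilon_{n,k+1}/\varepsilon_{n,k} - 1 = O(1/n^2)$ ensures that this drift contributes only an $O(1/n)$ cumulative perturbation over the $2n+1$ steps, so once it is absorbed the problem reduces to the uniform estimates of the autonomous Lavaurs theorem \cite{shishikura}. Combined with the trivial convergence of the $W$-coordinate to $0$, this gives the claimed limit $(\mathcal{L}_f(z),0)$ locally uniformly on $\mathcal{B}_f\times \mathcal{B}_g$.
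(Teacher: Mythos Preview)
Your sketch is correct and follows the same strategy as the paper's ``Idea of the proof'': both identify the first-coordinate dynamics as a non-autonomous composition of maps $f(\cdot)+\varepsilon_{n,k}^2$ with $\pi/\varepsilon_{n,k}\simeq 2n+k/n$, and both argue that the slow drift of the perturbation is exactly compensated by the choice of $2n+1$ (rather than $2n$) iterates. Your formulation of this compensation as the resonance condition $\sum_{k=0}^{2n}\varepsilon_{n,k}\to\pi$ for a variable-step Euler scheme of the flow $\dot\tau=1+\tau^2$ is precisely the mechanism behind the paper's terse phrase ``the decay of $\varepsilon_k$ is counterbalanced by taking exactly one additional iterate of $F$,'' and the hard step you flag (uniform control of the discrete orbit through the gate despite the $k$-dependence of $\varepsilon_{n,k}$) is indeed the crux of the full argument in \cite{ABDPR}.
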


\begin{proof}[Idea of the proof]
Let $\mathcal{B}_g$ the parabolic basin of $0$ under iteration of $g$. For all $w\in \mathcal{B}_g$, the orbit $g^{\circ m}(w)$ converges to~0 like $1/m$. We want to analyze the behaviour of $F$ starting at $\bigl(z,g^{\circ n^2}(w)\bigr)$ during $2n+1$ iterates. For large $n$, the first  coordinate of $F$ along this orbit segment
is approximately
$$
f(z)+ \varepsilon^2~~~\hbox{with}~~~\frac{\pi}{\varepsilon} \simeq 2n.
$$
As we saw in the previous section, a rough statement of Lavaurs Theorem from parabolic implosion gives us that if $\frac{\pi}{\varepsilon} = 2n$, then  for large  $n$,
the $(2n)^{\rm th}$ iterate of $f(z)+ \varepsilon^2$ is approximately equal to $\mathcal{L}_f(z)$ on $\mathcal{B}_f$.

Our setting  is different since in our case $\varepsilon$ keeps decreasing along the orbit. Indeed on the first coordinate  we are taking the composition of $2n+1$ transformations of the form
$$
f(z)+ \varepsilon_k^2~~\hbox{with}~~\frac{\pi}{\varepsilon_k} \simeq 2n+ \frac{k}{n}~~{\hbox{and}}~~1 \leq k \leq    2n+1 .
$$
The key step in the proof of the statement consists in a detailed analysis of this non-autonomous situation, proving that the decay of $\varepsilon_k$ is counterbalanced
by taking {\em exactly} one additional iterate of $F$. 
\end{proof}

With this proposition in hand, the proof of Theorem \ref{thm:wandering} is easily completed. 

\begin{proof}[Proof of Theorem~\ref{thm:wandering}] Let $\xi$ be an attracting fixed point of $\mathcal{L}_f$ and let $V\subset {\cal B}_f$ be a disk centered at $\xi$ and such that ${\cal L}_f(V)$ is compactly contained in $V$. Therefore ${\cal L}_f^{\circ k}(V)$ converges to $\xi$ as $k\to+\infty$. Let $W \Subset {\cal B}_g$ be an arbitrary disk.

Thanks to Proposition \ref{pr:key}, there exists  $n_0 \in \mathbb{N}$ such that for every $n\geq n_0$,
\[ 
\pi_1 \circ  F^{\circ (2n+1)}(V \times g^{\circ n^2}(W))\Subset V,
\]
where $\pi_1: \mathbb{C}^2\to \mathbb{C}$ denotes the projection on the first coordinate, 
$\pi_1(z,w):=z$.

Let $U$ be a connected component of the open set $F^{-n_0^2}\bigl(V\times g^{\circ n_0^2}(W)\bigr)$. Then for every integer $n\geq n_0$, we have
\begin{equation}
\label{eqincl}
 F^{\circ n^2} (U)\subseteq V\times
g^{\circ n^2}(W).
\end{equation}
In fact, this holds by assumption for $n=n_0$.  Now if the inclusion is true for some $n\geq n_0$, then
\begin{align*}
\pi_1  \circ F^{\circ (n+1)^2} (U)& = \pi_1 \circ F^{\circ (2n+1)}
\left( F^{\circ n^2} (U) \right)
\\& \subset \pi_1 \circ F^{\circ (2n+1)} \left(V\times g^{\circ n^2}(W)\right)\subset V,
\end{align*}
from which \eqref{eqincl} follows. This yields that the sequence $(F^{\circ n^2})_{n\geq 0}$ is uniformly bounded, and hence normal, on $U$. Moreover, any cluster value of this sequence of maps is constant and of the form $(z,0)$ for some $z\in V$, and $(z,0)$ is a limit value (associated to a subsequence $(n_k)_{k\in\mathbb{N}}$) if and only if $\bigl({\cal L}_f(z),0\bigr)$ is a limit value (associated to the subsequence $(1+n_k)_{k\in\mathbb{N}}$). Therefore the set of cluster limits is totally invariant under
 ${\cal L}_f:  V\to V$, and so it must coincide with the attracting fixed point $\xi$ of ${\cal L}_f$.
Therefore the sequence $(F^{\circ n^2})_{n\geq 0}$ converges locally uniformly to $(\xi,0)$ on~$U$.

The sequence $(F^{\circ m})_{m\in\mathbb{N}}$ is locally bounded  on $U$ if and only if there exists a subsequence  $(m_k)_{k\in\mathbb{N}}$ such that $(F^{\circ m_k}|_U)_{k\in\mathbb{N}}$  has the same property. In fact, since $ \overline {W}$ is compact, there exists $R>0$ such that if $|z|> R$, then for every  $w\in W$, $(z, w)$ escapes locally uniformly to infinity under iteration. The domain $U$ is therefore contained in the Fatou set of $F$.

Let $\Omega$ be the component of the Fatou set ${\cal F}_F$ containing $U$. 
For any integer $j\ge 0$, the sequence of maps $(F^{\circ n^2+j})_{n\in\mathbb{N}}$ converges locally uniformly to $F^{\circ j}(\xi,0) = (f^{\circ j}(\xi),0)$ on $U$ and hence on $\Omega$. Therefore the sequence $(F^{\circ n^2})_{n\in\mathbb{N}}$ converges locally uniformly to $(f^{\circ j}(\xi),0)$ on $F^{\circ j}(\Omega)$. If $i,j$ are nonnegative integers such that $F^{\circ i}(\Omega)= F^{\circ j}(\Omega)$, then $f^{\circ i}(\xi)= f^{\circ j}(\xi)$, and so $i=j$ because $\xi$ cannot be pre-periodic under iteration of $f$, since it belongs to the parabolic basin $\mathcal{B}_f$. This proves that
$\Omega$ is not (pre-)periodic under iteration of $F$, and so it is a wandering Fatou component for $F$.
\end{proof}

We end this subsection with some explicit examples satisfying the assumption of Theorem~\ref{thm:wandering}.

\begin{example}{Example}
As a consequence of Proposition~\ref{prop:main} we obtain that if $f: \mathbb{C}\to \mathbb{C}$ is the cubic  polynomial $f(z) = z+z^2+az^3$, and $g$ is as in \eqref{eq:fg}, then the polynomial skew-product $F$ defined in \eqref{eq:skew} admits a wandering Fatou component  for  $r>0$ sufficiently small and $a\in D(1-r, r)$.
\label{ex:complex}
\end{example}

It is also interesting to search for real polynomial mappings with wandering Fatou domains intersecting $\mathbb{R}^2$. We also have such examples.

\begin{example}{Example \cite[Proposition C]{ABDPR}}
\label{ex:real}
Let $f: \mathbb{C}\to \mathbb{C}$ be the degree $4$ polynomial  defined by
$$
f(z):=z+z^2+bz^4~\hbox{with}~b\in \mathbb{R}.
$$
There exist parameters $b\in (-8/27,0)$ such that for $g$ as in \eqref{eq:fg},  the
polynomial skew-product $F$ defined in \eqref{eq:skew} has a wandering Fatou component intersecting~$\mathbb{R}^2$.
\end{example}

Astorg, Boc-Thaler and Peters constructed in \cite{ABP} a second example of polynomial skew-products, arising from similar techniques, but with distinctly different dynamical behaviour. Instead of wandering domains arising from a Lavaurs map with an attracting fixed point, they constructed a domain arising from a Lavaurs map with a fixed point of Siegel type. 

In \cite{HaPe}, Hahn and Peters constructed polynomial automorphisms of $\mathbb{C}^4$ with wandering Fatou components. Their four-dimensional automorphisms lie in a one-parameter family, depending on a parameter $\delta\in\mathbb{C}\setminus\{0\}$, and degenerating to the two-dimensional polynomial map in equation \eqref{eq:fg} of Theorem \ref{thm:wandering} as $\delta$ converges to~$0$.

Berger and Biebler recently proved in \cite{BergerBiebler} the existence of a locally dense set of polynomial automorphisms of $\mathbb{C}^2$ with real coefficients, having a wandering Fatou component. These Fatou components have nonempty real trace and their statistical behaviour is historic with high emergence. The proof is based on a geometric model for parameter families of surface real mappings.  	

\subsection{Elliptic invariant fiber}
\label{subsec:8.5}

With Peters we investigated in \cite{PR} the case of invariant fibers at the center of a Siegel disk. More precisely, we considered a polynomial skew-product of the form \eqref{eq:1} having an {\em elliptic invariant fiber}. As before, we can assume without loss of generality that the invariant fiber is $\{w=0\}$ and so we have $g(0)=0$ and $g'(0) = e^{2\pi i \theta}$ with $\theta \in\mathbb{R}\setminus\mathbb{Q}$. We assume that the origin belongs to a Siegel disk for $g$ and hence $g$ is locally holomorphically linearizable near $w = 0$. Therefore, up to a local change of coordinates we may assume that $F$ is of the form
$$
F(z,w) = (f(z,w), \lambda \cdot w),
$$
where $f_w(z) := f(z,w)$ is a polynomial in $z$ with coefficients depending holomorphically on $w$. We assume that the degree of the polynomial $f_w$ is constant near $\{w = 0\}$, and at least $2$.

In this case we only have a partial answer to Question 1. In fact, while we already know that the attracting Fatou components of $f_0$ always bulge, the general situation appears to be more complicated as there might be resonance phenomena. In \cite{PR} we proved the following local result, implying that all parabolic Fatou components of a polynomial skew-product with an elliptic invariant fiber bulge if the rotation number satisfies the Brjuno condition.

\begin{proposition}[{\cite[Proposition 2]{PR}}]
Let $F$ be a holomorphic skew-product of the form
\begin{equation}
F(z,w) = (f_w(z),g(w))
\end{equation}
with $g(w) = \lambda  w + O(w^2)$, $f_0(0)=0$, and $f_0'(0)=1$. Assume $\lambda$ is a Brjuno number. If $f_0(w)\equiv w$, then $F$ is holomorphically linearizable at the origin. If $f_0(z) = z + f_{0,k+1} z^{k+1} + O(z^{k+2})$ with $f_{0,k+1}\ne0$ for some $k \ge 1$, then for any $h\ge 0$ there exists a local holomorphic change of coordinates near the origin conjugating $F$ to a map of the form $\widetilde{F}(z,w) = (\widetilde{f}(z,w),\lambda w)$ satisfying
\begin{equation}
\widetilde{f}(z,w) = z + f_{0,k+1} z^{k+1} + \cdots + f_{0,k+h+1} z^{k+h+1} + \sum_{j\ge h}z^{k+j+2}\alpha_{k+j+2}(w),
\end{equation}
where, for $j\ge h$, $\alpha_{k+j+2}(w)$ is a holomorphic function in $w$ such that $\alpha_{k+j+2}(0) = f_{0,k+j+2}$.
\label{localcoordinates}
\end{proposition}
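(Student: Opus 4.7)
The plan is to perform a sequence of elementary changes of coordinates, each reducing to a one-variable cohomological equation of the form $\phi(w) - \phi(\lambda w) = \psi(w)$ whose solvability relies on the Brjuno condition through the small divisors $1 - \lambda^n$.

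First I would apply the one-dimensional Brjuno--Siegel theorem to $g$ and assume henceforth that $g(w) = \lambda w$. Writing $f(z, w) = a_0(w) + a_1(w) z + \sum_{j \ge 2} a_j(w) z^j$ with $a_0(0) = 0$ and $a_1(0) = 1$, the next step is to produce an invariant curve $z = \gamma(w)$ through the origin, i.e., a holomorphic solution of $f_w(\gamma(w)) = \gamma(\lambda w)$. The coefficients $\gamma_n$ are determined inductively by $\gamma_n(\lambda^n - 1) = P_n(\gamma_1, \ldots, \gamma_{n-1}; \{a_j\})$, and a Siegel-type majorant argument under the Brjuno condition produces a genuinely convergent $\gamma$. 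Translating $z \mapsto z - \gamma(w)$ then forces $a_0(w) \equiv 0$. A further rescaling $z \mapsto u(w) z$, with $u$ obtained by solving $\log u(\lambda w) - \log u(w) = -\log a_1(w)$ (again a Brjuno cohomological equation), normalizes the multiplier to $a_1(w) \equiv 1$. After these reductions, $F$ takes the form $(z + \sum_{j \ge 2} c_j(w) z^j, \lambda w)$ with $c_j(0) = f_{0, j}$.

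The core step is an order-by-order normalization of the coefficients $c_j(w)$. A direct computation shows that conjugating by the tangential automorphism $\Phi_m(z, w) = (z + \phi_m(w) z^m, w)$ leaves the coefficients of $z^j$ for $j < m$ unchanged and replaces the coefficient of $z^m$ by $c_m(w) - \phi_m(w) + \phi_m(\lambda w)$. I therefore solve
\[
\phi_m(w) - \phi_m(\lambda w) = c_m(w) - c_m(0)
\]
by expanding $c_m(w) = \sum_{n \ge 0} c_{m, n} w^n$ and setting $\phi_{m, n} = c_{m, n}/(1 - \lambda^n)$ for $n \ge 1$; the Brjuno condition ensures that $\phi_m$ is holomorphic in a neighborhood of $0$. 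Iterating this inductively for $m = 2, 3, \ldots, k + h + 1$ produces the desired partial normal form: the $z^m$-coefficient becomes the constant $c_m(0) = f_{0, m}$, so it vanishes for $2 \le m \le k$ and equals $f_{0, m}$ for $k + 1 \le m \le k + h + 1$, while the higher-order coefficients remain holomorphic functions of $w$ with the prescribed values at $w = 0$.

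The case $f_0 \equiv z$ is the same procedure continued indefinitely: all $c_m(0)$ vanish, and the inductive step can be performed for every $m \ge 2$, producing a formal change of coordinates that conjugates $F$ to $(z, \lambda w)$. The main obstacle will be to show that the resulting infinite composition of tangential automorphisms converges to a genuine holomorphic diffeomorphism near the origin; I would address this by a Brjuno-style majorant scheme, analogous to the one-dimensional linearization theorem, encoding all the $\phi_m$'s into a single normalizing series and controlling it on a sequence of shrinking polydisks by estimates governed by the Brjuno sum of $\lambda$. The finite-order normal form of the second statement then follows either as a direct truncation of this majorant argument, or more simply by observing that only finitely many of the above cohomological equations are involved, each of which is solvable in isolation.
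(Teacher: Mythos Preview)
Your proposal is correct and follows essentially the same Poincar\'e--Dulac strategy that the paper sketches: linearize $g$, then eliminate non-resonant terms degree-by-degree in $z$ via fibered changes of coordinates $(z,w)\mapsto(z+\phi_m(w)z^m,w)$, each reducing to a cohomological equation in $w$ solvable under the Brjuno condition, with the low-degree steps (invariant curve and multiplier normalization) treated separately as the paper notes. Your outline is in fact more explicit than the paper's sketch; the only substantive point left implicit---the convergence of the infinite procedure in the $f_0\equiv\mathrm{id}$ case---is exactly the place where a genuine Brjuno majorant argument is required, as you correctly identify.
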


We recall that a complex number is called \emph{Brjuno} (see \cite{Brjuno1} for more details) if
\begin{equation}\label{eq:brjuno}
\sum_{k=0}^{+\infty}{\frac{1}{2^k}}\log{\frac{1}{\omega(2^{k+1})}}<+\infty\;,
\end{equation}
where $\omega(m) = \min_{2\le k\le m} |\lambda^k - \lambda|$ for any $m\ge 2$, and a quadratic polynomial $\lambda w + w^2$ is linearizable near $0$ if and only if $\lambda$ is Brjuno (see \cite{Brjuno1, Brjuno2, Yoccoz}). For higher degree polynomials the Brjuno condition \eqref{eq:brjuno} is sufficient, but necessity is still an open question. 

The proof of the previous proposition makes use of a procedure inspired by the Poincar\'e-Dulac normalization process \cite[Chapter 4]{arnold} aiming to conjugate the given polynomial skew-product to a skew-product in a simpler form. At each step of the usual Poincar\'e-Dulac normalization procedure we can use a polynomial change of coordinates to eliminate all non-resonant monomials of a given degree. A similar idea is used here, and thanks to the skew-product structure of the germ and the Brjuno assumption we are able to eliminate all non-resonant terms of a given degree in the powers of $z$ by means of changes of coordinates that are polynomial in $z$ with holomorphic coefficients in $w$. It turns out that there are some differences in between the first degrees as it is pointed out in \cite[Section 2]{PR}.

The arithmetic Brjuno condition is strongly needed in the proof of Proposition \ref{localcoordinates} and it is natural to ask whether parabolic Fatou components of $f_0$ always bulge, or it is possible to characterize when they bulge. We do not have a complete answer to such question. However we can prove the following result.

\begin{proposition}[{\cite[Proposition 10]{PR}}]
Let $F(z,w) = (f_z(w), g(w))$ be a holomorphic skew-product with an elliptic linearizable invariant fiber. If $F$ does not admit a holomorphic invariant curve on the invariant fiber, then the parabolic Fatou components of $f_0$ do not bulge.
\end{proposition}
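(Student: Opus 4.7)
The plan is to argue by contradiction. Suppose $U$ is a parabolic Fatou component of $f_0$ whose parabolic fixed point $p\in\partial U$ satisfies $f_0(p)=p$ and $f_0'(p)=1$, and suppose $U$ bulges to a two-dimensional Fatou component $\Omega$ of $F$. I will produce a holomorphic $F$-invariant curve through $(p,0)$ transverse to the invariant fiber $\{w=0\}$, contradicting the hypothesis.

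First I extract a well-chosen limit of iterates. Since $\lambda=e^{2\pi i\theta}$ with $\theta$ irrational, the orbit $(\lambda^n)$ is equidistributed on the unit circle, so one can choose $n_k\to+\infty$ with $\lambda^{n_k}\to 1$. By normality of $(F^{\circ n})$ on $\Omega$, extract a subsequence such that $F^{\circ n_k}\to \Phi$ locally uniformly on $\Omega$. Because the second coordinate is $\lambda^{n_k} w\to w$, the map $\Phi$ has the form $\Phi(z,w)=(\phi(z,w),w)$ for some holomorphic $\phi:\Omega\to\mathbb{C}$. A further subsequencing makes $F^{\circ(n_k+1)}$ convergent as well; the identity $F\circ F^{\circ n_k}=F^{\circ n_k}\circ F$ passes to the limit and, comparing first coordinates, yields the conjugation equation
\[
\phi\bigl(f_w(z),\lambda w\bigr)=f_w\bigl(\phi(z,w)\bigr)\quad\text{on }\Omega.
\]
Restricting to $U=\Omega\cap\{w=0\}$, the iterates $F^{\circ n_k}(z,0)=(f_0^{\circ n_k}(z),0)$ converge to $(p,0)$ since $U$ is the parabolic basin of $p$, hence $\phi(z,0)\equiv p$ on $U$.

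Next I read off the Taylor coefficients of the would-be invariant curve from $\phi$. Write $\phi(z,w)=p+\sum_{k\ge 1}\alpha_k(z)w^k$ near $\{w=0\}$, with $\alpha_k$ holomorphic on $U$. Differentiating the conjugation equation $k$ times in $w$ at $w=0$ and using $f'_0(p)=1$ produces cohomological equations of the form
\[
\lambda^k\,\alpha_k\bigl(f_0(z)\bigr)=\alpha_k(z)+R_k(z),
\]
where $R_k$ depends only on $\alpha_1,\dots,\alpha_{k-1}$ and on the jet of $(z,w)\mapsto f_w(z)$ at $(p,0)$. Since $f_0^{\circ n}(z)\to p$ for every $z\in U$, one inductively obtains limits $h_k:=\lim_{z\to p,\;z\in U}\alpha_k(z)$ satisfying $(\lambda^k-1)h_k=R_k(p)$; these $h_k$ are precisely the Taylor coefficients of the formal solution $h(w)=p+\sum_{k\ge 1}h_k w^k$ to the invariance equation $f_w(h(w))=h(\lambda w)$, with $h(0)=p$.

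The hard part will be proving convergence of the formal series $h$, which is exactly where the small-divisor factors $(\lambda^k-1)^{-1}$ would normally obstruct a Brjuno-type argument (compare Proposition~\ref{localcoordinates}). The idea is that the bulging hypothesis provides, for free, a holomorphic $\phi$ whose Taylor coefficients inherit Cauchy estimates: on any bidisk $D(z_n,\rho_n)\times D(0,r_n)\subset\Omega$ with $z_n\in U$ tending to $p$, one has $|\alpha_k(z_n)|\le M_n/r_n^k$. The delicate step is to establish a uniform lower bound on the $w$-radii $r_n$, using the geometry of the parabolic petal of $U$ near $p$ together with the Fatou coordinates of $f_0|_U$ to exclude any pinching of $\Omega$ at the boundary point $(p,0)$; this yields a uniform geometric bound $|h_k|\le C/r_0^k$, hence convergence of $h$ on $|w|<r_0$. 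The graph $\Gamma=\{(h(w),w):|w|<r_0\}$ is then a holomorphic curve through $(p,0)$ transverse to the invariant fiber, and $F(\Gamma)\subset\Gamma$ by construction, contradicting the hypothesis.
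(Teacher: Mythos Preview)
The survey itself contains no proof of this proposition; it is simply cited from \cite{PR}, so there is no argument in the paper to compare against. I therefore assess your proposal on its own terms, and it has a genuine gap at the step where you set $h_k:=\lim_{z\to p,\;z\in U}\alpha_k(z)$. These limits need not exist. Already for $k=1$ your cohomological equation reads $\lambda\,\alpha_1(f_0(z))=\alpha_1(z)+c$ with $c:=\partial_w f(p,0)$ constant; putting $\beta:=\alpha_1-c/(\lambda-1)$ gives $\beta\circ f_0=\lambda^{-1}\beta$, hence $\beta(f_0^{\circ n}(z))=\lambda^{-n}\beta(z)$. Since $|\lambda|=1$, along any $f_0$--orbit $z_n\to p$ the values $\beta(z_n)$ rotate on a fixed circle and accumulate on the whole circle, so $\alpha_1$ has a limit at $p$ only if it is already the constant $c/(\lambda-1)$. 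Non-constant solutions of $\beta\circ f_0=\lambda^{-1}\beta$ do exist on a parabolic basin (e.g.\ $\beta=e^{-2\pi i\theta\,\Phi_f}$ with $\lambda=e^{2\pi i\theta}$ and $\Phi_f$ the attracting Fatou coordinate), and nothing in your argument rules them out for the particular $\alpha_1=\partial_w\phi(\cdot,0)$. The induction therefore fails at the base step. Your description of $R_k$ is also inaccurate: for $k\ge 2$ the remainder involves $\partial_w f(z,0)$ and derivatives $\alpha_j'$ at general $z\in U$, not merely the jet of $f$ at $(p,0)$.

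The Cauchy--estimate paragraph does not repair this: uniform bounds $|\alpha_k(z_n)|\le M/r_0^{k}$ yield boundedness, not convergence, and the ``delicate step'' of bounding the $w$--radii $r_n$ from below is exactly a non-pinching statement for $\Omega$ at the Julia point $(p,0)$ which you leave entirely open. What would make your scheme work is to prove directly that $\phi$ is independent of $z$ on $\Omega$ (equivalently, that every $\alpha_k$ is constant on $U$); then $h(w):=\phi(z_0,w)$ is well defined with automatic convergence, and the commutation $\Phi\circ F=F\circ\Phi$ immediately gives $f_w(h(w))=h(\lambda w)$, producing the invariant graph. As written, the proposal does not establish this and therefore does not prove the proposition.
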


Moreover, we can construct examples showing that the Brjuno condition cannot be completely omitted. In fact by taking {\em Cremer} numbers, that is $\lambda\in\mathbb{C}$ with $|\lambda|=1$ and such that
$$
\limsup_{m\to\infty} \frac{1}{m} \log\frac{1}{\omega(m)} = +\infty,
$$
we can construct examples of polynomial skew-product not admitting a holomorphic invariant curve on the invariant fiber. An elementary example is the following polynomial skew-product
$$
F(z,w) = (z+w+zw, \lambda w)
$$
with $\lambda$ a Cremer number (see {\cite[Example 8]{PR}} for details). More generally, arguing as in Cremer's example \cite{Cremer2} we can show (see \cite[Proposition 9]{PR}) the existence of holomorphic skew-products without invariant holomorphic curves of the form $\{w=\varphi(z)\}$, having an elliptic invariant fiber that is not point-wise fixed.

Describing the general situation can also be complicated by resonance phenomena. For example, an invariant fiber at the center of a Siegel disk was used in \cite{BFP} to construct a non-recurrent Fatou component with limit set isomorphic to a punctured disk, and in that construction the invariant fiber also contains a Siegel disk, but with opposite rotation number. Moreover, it might happen that Fatou components on the invariant fiber do not bulge. For example, we can consider the skew-product
$$
F(z,w) = (\lambda z(1+ a zw), \lambda^{-1} w),
$$
with $a\in\mathbb{C}^*$, $\lambda= e^{2\pi i \theta}$ and $\theta\in\mathbb{R}\setminus\mathbb{Q}$. We have $F(z,0) = (\lambda z, 0)$, but the Siegel disk around the origin in $\{w=0\}$ is not bulging, and in fact it follows from \cite{BZ} and \cite{BRZ} that there exists a Fatou component of parabolic type having on its boundary the origin of $\mathbb{C}^2$, which is fixed by $F$.

We also have an answer to Question 2, under the assumption that the multiplier at the elliptic invariant fiber is Brjuno and all critical points of the polynomial acting on the invariant fiber lie in basins of attracting or parabolic cycles.

\begin{theorem}[{\cite[Theorem 1]{PR}}]
Let $F$ be a polynomial skew-product of the form
\begin{equation}
F(z,w) = (f_w(z),g(w)),
\end{equation}
and let $\{w=w_0\}$ be an elliptic invariant fiber with multiplier $\lambda$.
If $\lambda$ is Brjuno and all critical points of the polynomial $f_{w_0}$ lie in basins of attracting or parabolic cycles, then all Fatou components of $f_{w_0}$ bulge, and there is a neighbourhood of the invariant fiber $\{w={w_0}\}$ in which the only Fatou components of $F$ are the bulging Fatou components of $f_{w_0}$. In particular there are no wandering Fatou components in this neighbourhood.
\label{thm:mainPR}
\end{theorem}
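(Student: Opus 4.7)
My approach is to combine three ingredients: the Brjuno linearization of $g$, the fibered normal form supplied by Proposition~\ref{localcoordinates} at each parabolic cycle of $f_{w_0}$, and a subhyperbolicity argument in the spirit of Peters--Smit \cite{PS}. Without loss of generality I take $w_0 = 0$. Since $\lambda$ is Brjuno, after a local holomorphic change of coordinates I may assume $g(w) = \lambda w$ on a fixed Siegel disk $\Sigma$ of $g$ around $0$. The hypothesis that every critical point of $f_0$ lies in an attracting or parabolic basin implies in particular that $f_0$ has no rotation domain, that every Fatou component of $f_0$ is pre-periodic to an attracting or parabolic cycle, and that $f_0$ is subhyperbolic: its post-critical set $P(f_0)$ is compactly contained in the union of those basins, and $f_0|_{\mathcal{J}_{f_0}}$ expands with respect to an orbifold metric on $\widehat{\mathbb{C}} \setminus P(f_0)$.

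The first task is to prove bulging. For each attracting periodic cycle of $f_0$, a standard Hartogs--Rosay--Rudin argument shows that the cycle persists as a normally attracting periodic cycle of $F$ with a two-dimensional basin whose intersection with $\{w = 0\}$ is exactly the one-dimensional basin. For each parabolic cycle, I apply Proposition~\ref{localcoordinates} with $h$ chosen large enough to conjugate $F$, near each point of the cycle, to a skew-product whose first coordinate agrees with $f_0$ to arbitrarily high order in $z$. In these coordinates the $w$-dependent perturbation is so flat at $z = 0$ that a classical petal, thickened in the $w$-direction, remains forward invariant and carries a normal family of iterates; its union with the $F$-orbit of this thickening yields a two-dimensional Fatou component bulging the one-dimensional parabolic basin. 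Pulling back along iterates of $F$ then produces a bulging two-dimensional Fatou component for every pre-periodic Fatou component of $f_0$.

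The main step is to exclude any other Fatou component in a neighborhood of $\{w = 0\}$, equivalently to rule out wandering Fatou components there. I would follow the Peters--Smit strategy from \cite{PS}. Suppose by contradiction that a Fatou component $\Omega$ of $F$ meets a fiber $\{w = \widetilde w_0\}$ with $\widetilde w_0 \in \Sigma$ close to $0$ in a disk $D$ whose forward orbit avoids all bulging components. Normality on $D$ yields, up to extracting, a constant limit $(\zeta, 0)$ with $\zeta \in \mathcal{J}_{f_0} \setminus P(f_0)$. The expanding orbifold metric on $\widehat{\mathbb{C}} \setminus P(f_0)$ furnishes inverse branches $h_n$ of $f_0^{\circ n}$ on a disk of definite radius around $\zeta$ whose diameters shrink exponentially; by the implicit function theorem these lift to inverse branches of $F^{\circ n}$ between small bidisks centered on corresponding points of the orbit of $(\zeta, 0)$. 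A Poincar\'e-area comparison through the proper map $F_1^{\circ n_j} \colon O_j \to D(\zeta, r)$, of the type stated in \cite[Proposition~28]{PS}, then forces the Poincar\'e area of $D$ in $O_j$ to converge to $0$, contradicting the fact that $D \subset O_j$.

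The hardest part is the substitute for \cite[Proposition~15]{PS}, which bounds the number of critical points of $F_1^{\circ n}(\,\cdot\,, \widetilde w_0)$ whose orbit leaves a fixed neighborhood of $P(f_0)$ by $C\sqrt{n}$, and whose proof crucially uses the genuine contraction $|g'(0)| < 1$ in the fiber direction. In our elliptic setting the fibers are only rotated by $\lambda$ and no such contraction is available. I expect the replacement to come from combining the Brjuno linearization with Proposition~\ref{localcoordinates}: on every attracting basin of $f_0$ a transverse Cauchy estimate at $w = 0$ gives a direct comparison of $F_1^{\circ n}(\,\cdot\,, \widetilde w_0)$ with $f_0^{\circ n}$ for $|\widetilde w_0|$ small, while on parabolic basins the normalization makes the $w$-dependence of $\widetilde f$ as flat in $z$ as needed, so that the same comparison persists along parabolic orbits. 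Turning these qualitative statements into a polynomial-in-$n$ bound on the number of wayward critical points of $F_1^{\circ n}(\,\cdot\,, \widetilde w_0)$ is the technical heart of the proof; once it is in place, the area estimate of \cite[Proposition~28]{PS} applies verbatim and, together with the bulging established above, yields the theorem.
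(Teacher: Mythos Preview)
Your approach diverges substantially from the paper's, and the divergence is precisely at the point where you acknowledge a gap. The paper does \emph{not} adapt the Peters--Smit area/critical-point-counting machinery of \cite{PS}. Instead, it exploits the hypothesis that all critical points of $f_{w_0}$ lie in attracting or parabolic basins to build, on a backward-invariant neighbourhood of $\mathcal{J}_{f_{w_0}}$ minus the parabolic orbits, a conformal metric $\mu$ with respect to which $f_{w_0}$ is strictly expanding. The key step is then to produce, for each fiber $\{w=\widetilde w_0\}$ sufficiently close to the invariant one, a conformal metric $\mu_{\widetilde w_0}$ varying continuously in $\widetilde w_0$ such that $F$ is strictly expanding from $\mu_{\widetilde w_0}$ to $\mu_{g(\widetilde w_0)}$. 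If $(\widetilde z_0,\widetilde w_0)$ avoids the bulging components and its orbit stays in the region where the metrics are defined, any nonzero tangent vector in the fiber direction grows without bound under the differentials of $F^{\circ n}$, so $(F^{\circ n})$ cannot be normal near $(\widetilde z_0,\widetilde w_0)$. This metric argument never needs a count of wayward critical points, and so never confronts the obstacle you correctly identified: in the elliptic setting there is no transverse contraction to force \cite[Proposition~15]{PS}.

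Your bulging argument is in the right spirit, but the heart of your proposal---transporting the Peters--Smit scheme to the elliptic case---carries a genuine gap. You yourself flag that the polynomial-in-$n$ bound on critical points of $F_1^{\circ n}(\cdot,\widetilde w_0)$ escaping a neighbourhood of $P(f_0)$ is ``the technical heart of the proof'' and that its proof in \cite{PS} ``crucially uses the genuine contraction $|g'(0)|<1$''; you then offer only an expectation, not an argument, for how Proposition~\ref{localcoordinates} would replace that contraction. There is no evident mechanism by which the flatness in $z$ of the normal form controls the global distribution of critical points of the high iterates on a fixed fiber, and without that bound the area estimate from \cite[Proposition~28]{PS} has nothing to bite on. The paper's expanding-metric route is both simpler and avoids this obstruction entirely; I would recommend abandoning the Peters--Smit template here in favour of the fiberwise metric construction.
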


\begin{proof}[Idea of the proof]
The proof relies on the fact that, in dimension 1, if every critical point of the polynomial either (1) lies in the basin of an attracting periodic cycle, (2) lies in the basin of a parabolic periodic cycle, or (3) lies in the Julia set and after finitely many iterates is mapped to a periodic point, then it is possible to construct a conformal metric $\mu$, defined in a backward invariant neighbourhood of the Julia set minus the parabolic periodic orbits, so that $f_c$ is expansive with respect to this metric. It then follows that there can be no wandering Fatou components.

The key step in the proof is that for fibers $\{w = \widetilde w_0\}$ sufficiently close to the invariant fiber $\{w= w_0\}$ we can define conformal metrics $\mu_{\widetilde w_0}$ that depend continuously on $\widetilde w_0$, and so that $F$ acts expansively with respect to this family of metrics (see \cite[Section 3]{PR} for details). That is, for a point $(\widetilde z_0, \widetilde w_0) \in \mathbb C^2$ lying in the region where the metrics are defined, and for a non-zero tangent vector $\xi \in T_{\widetilde z_0}(\mathbb C_{\widetilde w_0})$ we have that
$$
\mu_{\widetilde w_1} (\widetilde z_1, df_{\widetilde w_0} \xi) > \mu_{\widetilde w_0}(\widetilde z_0, \xi),
$$
where $(\widetilde z_1,\widetilde w_1) = F(\widetilde z_0, \widetilde w_0)$. Therefore, if $(\widetilde z_0,\widetilde w_0)$ does not lie in one of the bulging Fatou components and its orbit $(\widetilde z_n,\widetilde w_n)$ remains in the neighbourhood where the expanding metrics are defined, for any tangent vector $\xi \in T_{\widetilde z_0}(\mathbb C_{\widetilde w_0})$ we have that
$$
\mu_{\widetilde w_n} df_{\widetilde z_{n-1}} \cdots df_{\widetilde z_0} \xi \rightarrow \infty,
$$
from which it follows that the family $(F^{\circ n})_{n\in\mathbb{N}}$ cannot be normal on any neighbourhood of $(\widetilde z_0,\widetilde w_0)$. This proves that there are no Fatou components but the bulging components near the invariant fiber $\{w=w_0\}$.
\end{proof}

It is unclear whether the same techniques could be used to deal with pre-periodic critical points lying on the Julia set. The difficulty is that the property that critical points are eventually mapped onto periodic cycles is not preserved in nearby fibers. 

\section*{Acknowledgements}
We thank John H. Hubbard for letting us use the book in preparation \cite{BuffHubbard} to write this survey. The first seven sections are adapted from the first chapter of this book. 
We also thank Marco Abate and Fabrizio Bianchi for useful comments on the first draft of these notes.

\vfill\eject

\end{document}